\renewcommand\labelenumi{(\roman{enumi})}
\renewcommand\theenumi\labelenumi
\newcommand\footnoteref[1]{\protected@xdef\@thefnmark{\ref{#1}}\@footnotemark}
\newtheorem{theorem}{Theorem}[section]
\newtheorem{proposition}[theorem]{Proposition}
\newtheorem{lemma}[theorem]{Lemma}
\newtheorem{corollary}[theorem]{Corollary}
\newtheorem{conjecture}[theorem]{Conjecture}
\theoremstyle{definition}
\newtheorem{remark}[theorem]{Remark}
\newcommand{\E}{\mathbb{E}}
\newcommand{\N}{\mathbb{N}}
\renewcommand{\P}{\mathbb{P}}
\newcommand{\R}{\mathbb{R}}
\newcommand{\T}{\mathbb{T}}
\newcommand{\Z}{\mathbb{Z}}
\newcommand{\cD}{\mathcal{D}}
\newcommand{\cL}{\mathcal{L}}
\newcommand{\hG}{\mathscr{G}}
\newcommand{\hF}{\mathscr{F}}
\newcommand{\ie}{i.e.\@\xspace}
\newcommand{\eg}{e.g.\@\xspace}
\newcommand{\iid}{i.i.d.\@\xspace}
\renewcommand{\tilde}{\widetilde}
\renewcommand{\hat}{\widehat}
\renewcommand{\d}[1]{\mathop{}\!\mathrm{d}#1}
\newcommand{\e}{\mathrm{e}}
\newcommand{\vep}{\varepsilon}
\let\originalleft\left
\let\originalright\right
\renewcommand{\left}{\mathopen{}\mathclose\bgroup\originalleft}
\renewcommand{\right}{\aftergroup\egroup\originalright}
\title{Upper moderate deviation probabilities for the maximum of a branching random walk}
\author{Louis \textsc{Chataignier}%
\footnote{Institut de Math\'ematiques de Toulouse, UMR 5219, Universit\'e de Toulouse, CNRS, UPS, F-31062 Toulouse Cedex 9, France.}~
and Lianghui \textsc{Luo}$^*$}
\date{\today}
\begin{document}

\maketitle

\begin{abstract}
    Consider $M_n$ the maximal position at generation~$n$ of a supercritical branching random walk.
    In 2013, A\"id\'ekon~\cite{Aidekon2013} obtained and described the convergence in law, as time~$n$ goes to infinity, of $M_n-m_n$, where $m_n$ is an explicit function.
    Equivalently, he identified the limit of $\P(M_n > m_n + x)$, for any $x \in \R$.
    More recently, Luo~\cite{Luo2025} gave an asymptotic equivalent for the upper large deviation probability, that is $\P(M_n > m_n + xn)$, for $x > 0$.
    In this work, we study an intermediate regime, called \emph{upper moderate deviation}.
    We obtain, under close-to-optimal integrability conditions, an asymptotic equivalent for $\P(M_n > m_n + x_n)$, where~$x_n$ is such that $x_n \to \infty$ and $x_n = O(\sqrt{n})$.
    Our proof is based on a strategy due to Bramson, Ding, and Zeitouni~\cite{BramsonDingZeitouni2016}.
    As a byproduct, we obtain information about the typical behavior of particles contributing to such deviations.
    Finally, we apply our main result to show the convergence in law of the centered maximum of a two-speed branching random walk in the mean regime and describe its limit.
\end{abstract}

\tableofcontents

\section{Introduction}

The branching random walk is a natural extension of the Galton--Watson tree, with the addition of a spatial attribute for each particle.
In one dimension, it is constructed as follows.
At time~$n=0$, a single particle is located at the origin of the real line.
At time~$n=1$, this particle dies and gives birth to children, scattered on~$\R$ according to some point process~$\cL$.
These new particles then restart the process, independently of each other, scattering children according to independent copies of~$\cL$, translated by their respective positions.
The system continues indefinitely, with possible extinction if there is no particle at some generation.

Given a particle~$u$, we denote by $|u|$ its generation and by $V(u)$ its position.
Our object of interest is the maximal displacement $M_n = \max_{|u|=n} V(u)$.
We work under standard assumptions for the study of its asymptotic behavior.
First, we assume the underlying Galton--Watson process to be supercritical, \ie
\begin{equation}\label{eq:assumption_supercriticality}
    \E \left[ \sum_{|u|=1} 1 \right] \in (1, \infty].
\end{equation}
Thus, the particle population survives indefinitely with positive probability (see \eg~\cite{Harris1963}).
Let us mention that we allow~$\cL$ to have infinitely many atoms with positive probability.
Moreover, we work in the setting where
\begin{equation}\label{eq:assumption_boundary_case}
	\E \left[ \sum_{|u|=1} \e^{V(u)} \right] = 1 \quad \text{and} \quad \E \left[ \sum_{|u|=1} V(u)\e^{V(u)} \right] = 0,
\end{equation}
known in the literature as the \emph{boundary case}\footnote{Actually, most authors consider the \emph{boundary case} in the sense of~\cite{BigginsKyprianou2005}, \ie the setting where~\eqref{eq:assumption_boundary_case} holds when replacing the positions $V(u)$ with their opposite $-V(u)$.
Then, they are interested in the minimal position instead of $M_n$.}.
See appendices of~\cite{Jaffuel2009} or~\cite{BerardGouere2011} for a discussion on the (rare) cases where a branching random walk cannot be reduced to Assumption~\eqref{eq:assumption_boundary_case}.
Writing for $x\in\R\cup\{\pm\infty\}$, $x_+\coloneq\max(x,0)$ and $x_-\coloneq\max(-x,0)$ and $\log_+x\coloneq\log(\max(1,x))$, we assume the following integrability conditions:
\begin{equation}\label{eq:assumption_gaussianity}
    \sigma^2 = \E\left[\sum_{|u|=1}V(u)^2 \e^{V(u)}\right] < \infty,
\end{equation}
and
\begin{equation}\label{eq:assumption_peeling_lemma}
   \E[X(\log_+X)^2] < \infty, \quad \E[\tilde{X}\log_+ \tilde{X}] < \infty,
\end{equation}
where 
\[X\coloneq\sum_{|u|=1} \e^{V(u)}, \quad \tilde{X}\coloneq\sum_{|u|=1} V(u)_- \e^{V(u)}.\]
We say that the branching random walk is $(h,a)$-\emph{lattice} if $0 \le a < h$ and~$h$ is the largest number such that the support of~$\cL$ is contained in $a+h\Z$.
In this case,~$h$ is called the \emph{lattice span}.
If such~$a$ and~$h$ do not exist, then we say that the branching random walk is \emph{non-lattice}.
In this paper, we work in both lattice and non-lattice cases, but it is sometimes necessary to treat them separately.

Before stating our main result, let us recall some known properties that hold under Assumptions~\eqref{eq:assumption_supercriticality}, \eqref{eq:assumption_boundary_case}, \eqref{eq:assumption_gaussianity}, and~\eqref{eq:assumption_peeling_lemma}.
For $n \ge 1$, set
\begin{equation*}
    m_n = - \frac{3}{2} \log n,
\end{equation*}
and define the \emph{derivative martingale}
\begin{equation}\label{eq:derivative_martingale}
    Z_n = -\sum_{|u|=n} V(u) \e^{V(u)}.
\end{equation}
Biggins and Kyprianou~\cite{BigginsKyprianou2004} and A\"id\'ekon~\cite{Aidekon2013} proved that~$Z_n$ converges almost surely as $n \to \infty$ and that the limit~$Z_\infty$ is almost surely positive on the non-extinction event.
Chen~\cite{Chen2015b} showed that~\eqref{eq:assumption_peeling_lemma} is a necessary assumption for $Z_\infty$ to be non-trivial.
In the non-lattice case, A\"id\'ekon~\cite{Aidekon2013} proved that there exists a constant~$C_{\mathrm{NL}}>0$ such that, for any $x \in \R$,
\begin{equation}\label{eq:aidekon}
    \lim_{n \to \infty} \P(M_n \le m_n + x) = \E \left[ \e^{-C_{\mathrm{NL}} Z_\infty\e^{-x}} \right].
\end{equation}
In other words, the centered maximum converges in law, conditionally on the non-extinction event, to a standard Gumbel variable with an independent random shift $\log(C_{\mathrm{NL}}Z_{\infty})$.
In the lattice case, there is no convergence because of the logarithm in~$m_n$, but tightness still holds, by Chen~\cite[Equation~4.20]{Chen2015a} or Mallein~\cite[Theorem~1.1]{Mallein2016}.
In both lattice and non-lattice cases, sharp upper and lower bounds were obtained for the tail of $M_n-m_n$, by Hu~\cite{Hu2016} and Mallein~\cite{Mallein2016} (see~\eqref{eq:hu_upper_bound} and~\eqref{eq:mallein_lower_bound} below).

Let us also mention that, under stronger assumptions than~\eqref{eq:assumption_supercriticality}, \eqref{eq:assumption_boundary_case}, \eqref{eq:assumption_gaussianity}, and~\eqref{eq:assumption_peeling_lemma}, Bramson, Ding and Zeitouni~\cite{BramsonDingZeitouni2016} explained how to adapt their arguments for the non-lattice case to obtain the analog of~\eqref{eq:aidekon} for the $(h,a)$-lattice case.
More precisely, there exists a constant~$C_{\mathrm{L}}>0$ such that
\begin{equation}\label{eq:bramson_ding_zeitouni}
    \lim_{n \to \infty} \sup_{x \in A_n} \left| \P(M_n \le m_n + x) - \E \left[ \e^{-C_{\mathrm{L}} Z_\infty\e^{-x}} \right] \right| = 0,
\end{equation}
where $A_n = \{x \in \R : (x+m_n-an)/h \in \Z\}$.

More recently, Luo~\cite{Luo2025} obtained an asymptotic equivalent for the upper large deviation probability of maximum: under some integrability conditions, for $x>0$, there exist $\kappa^*(x),C(x)>0$ such that 
\[\lim_{n\to\infty}\sqrt{n}\e^{\kappa^*(x)n}\P(M_n>xn)=C(x),\]
which strengthens previous results from Gantert and H\"ofelsauer~\cite{GantertHofelsauer2018}.

In this paper, we study an intermediate regime between typical fluctuations and upper large deviations, called \emph{upper moderate deviation}.
This concerns the event where $M_n > m_n + x_n$, with~$x_n$ going to infinity while $x_n/n$ goes to $0$.
In the framework of the binary branching Brownian motion, a similar spatial branching process in continuous time~$t \in [0,\infty)$, upper moderate deviations have already been studied.
Asymptotic equivalents were first obtained in specific regimes using estimates for solutions of the F--KPP equation~\cite{Bramson1983}: when $x_t$ grows at logarithmic rate~\cite[Theorem~1]{ChauvinRouault1990}, when $x_t$ is equivalent to~$\sqrt{t}$~\cite[Lemma~4.5]{ArguinBovierKistler2013}, and more recently, when $x_t$ goes to infinity while $(x_t \log t)/t$ goes to~$0$~\cite[Proposition~2.1]{BovierHartung2020}\footnote{Note that~\cite[Proposition~2.1]{BovierHartung2020} is however incorrectly stated for the regime where $x_t > 0$ and $x_t/t \to \infty$ (see~\cite{Chataignier2025}).}. Finally, Chataignier~\cite{Chataignier2025} treated the general regime where $x_t$ goes to infinity while $x_t/t$ goes to~$0$.
In the setting analogous to~\eqref{eq:assumption_boundary_case} and~\eqref{eq:assumption_gaussianity} for branching Brownian motion (see \eg~\cite{AidekonBerestyckiBrunetShi2013}), his main result reads
\begin{equation}\label{eq:moderate_deviation_bbm}
     \lim_{t \to \infty} \frac{\P(M_t > m_t + x_t)}{x_t \e^{-x_t - (x_t - (3/2)\log t)^2/(2t\sigma^2)}} = C_{\mathrm{BBM}},
\end{equation}
where 
\[C_{\mathrm{BBM}} \coloneq \frac{1}{\sigma^3} \sqrt{\frac{2}{\pi}} \lim_{s \to \infty} \int_0^\infty y \e^y \P \left( M_s > y \right) \d{y} \in (0,\infty).\]

Going back to branching random walk, an asymptotic equivalent of $\P(M_n>m_n+x_n)$ was already given by A\"id\'ekon~\cite[Proposition~4.1]{Aidekon2013} in the regime where $x_n\to\infty$ and $(3/2)\log n - x_n \to \infty$ as $n\to\infty$.
Hu~\cite{Hu2016} obtained asymptotic estimates for the \emph{lower} deviation probabilities $\P(M_n \le m_n - x_n)$ if~$x_n$ is negligible compared to $\log n$.
Under additional assumptions, Chen and He~\cite{ChenHe2020} extended these estimates to the case where~$x_n$ is at most of order~$n$.

The asymptotic order for the upper moderate deviation probabilities is already known thanks to Hu~\cite{Hu2016} and Mallein~\cite{Mallein2016}.
Hu obtained the following upper bound in~\cite[Lemma~4.4]{Hu2016}:
there exists $C>0$ such that, for any $n \ge 1$ and $y \ge 0$,
\begin{equation}\label{eq:hu_upper_bound}
    \P(M_n > m_n+y) \le C (1+y)\e^{-y}.
\end{equation}
Mallein obtained the following lower bound in~\cite[Theorem~4.1]{Mallein2016}:
for any $A > 0$, there exists $c>0$ such that, for any $n \ge 1$ and $y \in [0, A\sqrt{n}]$,
\begin{equation}\label{eq:mallein_lower_bound}
    \P(M_n > m_n+y) \ge c (1+y)\e^{-y}.
\end{equation}
Our main result Theorem~\ref{th:moderate_deviation} yields an asymptotic equivalent for $\P(M_n > m_n + x_n)$ when~$x_n$ goes to infinity at a rate at most of order~$\sqrt{n}$.
Thus, it is a refinement of~\eqref{eq:hu_upper_bound} and~\eqref{eq:mallein_lower_bound} in this regime.

As an application of our main result, we show in Theorem~\ref{th:two_speed} the analog of~\eqref{eq:aidekon} and~\eqref{eq:bramson_ding_zeitouni} for a two-speed branching random walk in the so-called \emph{mean regime}, which was conjectured in~\cite{Luo2025+} for the non-lattice case.
More precisely, we assume that particles reproduce according to different reproduction laws depending on whether their generation is before or after~$tn$, for some $t \in (0,1)$, and that both point processes satisfy~\eqref{eq:assumption_supercriticality}, \eqref{eq:assumption_boundary_case}, \eqref{eq:assumption_gaussianity}, and~\eqref{eq:assumption_peeling_lemma}.
As in the time-homogeneous branching random walk, in the non-lattice case, the centered maximum converges in law to a standard Gumbel variable with an independent random shift.
Regarding the lattice case of the time-homogeneous branching random walk, our theorem implies~\eqref{eq:bramson_ding_zeitouni}, which is in itself a new result under Assumptions~\eqref{eq:assumption_supercriticality}, \eqref{eq:assumption_boundary_case}, \eqref{eq:assumption_gaussianity}, and~\eqref{eq:assumption_peeling_lemma}.

Throughout this paper, the letters~$C$ and~$c$ will denote constants, \ie deterministic and time-invariant quantities, which may vary from one line to another and depend only on the law of the branching random walk.
For $x, y \in \R$, we will write $x \wedge y = \min(x,y)$ and $x \vee y = \max(x,y)$.
We will write $\lfloor x \rfloor$ the integer part of~$x$, \ie the greatest number in~$\Z$ such that $\lfloor x \rfloor \le x$.
Given a real sequence $(x_n)_{n \ge 0}$ and a positive sequence $(y_n)_{n \ge 0}$, we will write either $x_n = o_{n \to \infty}(y_n)$, $x_n = o_n(y_n)$, or $x_n = o(y_n)$ if $\lim_{n \to \infty} x_n/y_n = 0$, $x_n = O(y_n)$ if $\limsup_{n \to \infty} |x_n|/y_n < \infty$, $x_n \sim y_n$ if $\lim_{n \to \infty} x_n/y_n = 1$, and $x_n \asymp y_n$ if $0 < \liminf_{n \to \infty} x_n/y_n \le \limsup_{n \to \infty} x_n/y_n < \infty$.
Given $x \in \R$ and $A \subset \R$, we will write $x \pm A \coloneq \{x \pm a : a \in A\}$ and $xA \coloneq \{xa : a \in A\}$.
Finally, we work with the convention $\sup \varnothing = -\infty$ and $\inf \varnothing = +\infty$.

\subsection{Main result}

Here is the main result of this paper.

\begin{theorem}\label{th:moderate_deviation}
    Assume~\eqref{eq:assumption_supercriticality}, \eqref{eq:assumption_boundary_case}, \eqref{eq:assumption_gaussianity}, \eqref{eq:assumption_peeling_lemma}, and let $(x_n)$ be any sequence such that $x_n \to \infty$ and $x_n = O(\sqrt{n})$ as $n \to \infty$.
    \begin{enumerate}
        \item If the branching random walk is non-lattice, then
        \begin{equation}\label{eq:moderate_deviation_nl}
            \P(M_n > m_n + x_n) \sim C_{\mathrm{NL}} x_n \e^{-x_n - x_n^2/(2n\sigma^2)},
        \end{equation}
        where
        \begin{equation}\label{eq:C_NL}
            C_{\mathrm{NL}} \coloneq \frac{1}{\sigma^3} \sqrt{\frac{2}{\pi}} \lim_{\ell\to\infty} \int_0^\infty y \e^{y} \P(M_\ell>y) \d{y} \in (0,\infty).
        \end{equation}
        \item If the branching random walk is $(h,a)$-lattice and $x_n + m_n \in an + h\Z$ for each~$n$, then
        \begin{equation}\label{eq:moderate_deviation_l}
            \P(M_n > m_n + x_n) \sim C_{\mathrm{L}} x_n \e^{-x_n - x_n^2/(2n\sigma^2)},
        \end{equation}
    where
    \begin{equation}\label{eq:C_L}
        C_{\mathrm{L}} \coloneq \frac{h}{\sigma^3}\sqrt{\frac{2}{\pi}}\lim_{\ell\to\infty}\sum_{y\in(a\ell+h\Z)\cap[0,\infty)}y\e^y\P(M_\ell>y)\in(0,\infty).
    \end{equation}
    \end{enumerate}
\end{theorem}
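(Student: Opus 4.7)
The plan is to adapt the strategy of Bramson, Ding, and Zeitouni to the moderate deviation regime, carefully tracking how the Gaussian concentration factor $\e^{-x_n^2/(2n\sigma^2)}$ emerges from the local central limit theorem along a walk of length $n$ ending near $m_n + x_n$. I would decompose the trajectory of any contributing particle into three time segments: an initial segment $[0,\ell]$ with $\ell$ fixed but eventually large, a bulk segment $[\ell, n-\ell]$, and a final segment $[n-\ell, n]$. On the initial segment, the derivative martingale $Z_\ell$ stabilizes to $Z_\infty$; on the bulk segment, a Lyons-type change of measure reduces the many-to-one computation to an estimate on a centered random walk $(S_k)$ with step variance $\sigma^2$; on the final segment, the descendants of each particle behave as essentially independent branching random walks of length $\ell$, whose contribution is encoded in the tail $\P(M_\ell > y)$.

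For the upper bound, I would use a peeling-type argument (which is where Assumption~\eqref{eq:assumption_peeling_lemma} enters) to restrict attention to particles whose entire ancestral trajectory stays below a suitable barrier; trajectories violating the barrier are handled by the crude tail bound~\eqref{eq:hu_upper_bound}. The many-to-one lemma then yields
\[
    \E\left[\sum_{|u|=n} \mathbf{1}_{\{V(u) > m_n + x_n\}\cap B_n(u)}\right] = \E\left[\e^{-S_n} \mathbf{1}_{\{S_n > m_n + x_n\}\cap B_n}\right],
\]
where $B_n$ encodes the barrier event. Combining a ballot-type estimate (the probability that $(S_k)$ stays below a linear barrier and lands near a high level $y$ is of order $y/n^{3/2}$) with the local central limit theorem (yielding the Gaussian density $\e^{-y^2/(2n\sigma^2)}/\sqrt{n}$), and then integrating $\e^{-y}$ against the resulting kernel for $y > m_n + x_n$, produces the announced order $x_n \e^{-x_n - x_n^2/(2n\sigma^2)}$. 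The identity $\e^{-m_n} = n^{3/2}$ together with the condition $x_n = O(\sqrt n)$ makes all other powers of $n$ cancel.

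For the lower bound and the identification of the constant, I would rely on a truncated second moment argument on particles that follow a barrier-respecting trajectory on $[0, n-\ell]$ and then produce a maximal descendant above the threshold during the last $\ell$ generations; the matching lower bound~\eqref{eq:mallein_lower_bound} provides the required input in the last segment. Letting $\ell \to \infty$ at the end, the initial block contributes the derivative martingale $Z_\infty$ (absorbed via $\E[Z_\infty] = 1$ under the tilted measure on the spine), while the final block contributes $\int_0^\infty y\e^y \P(M_\ell > y)\d{y}$ in the non-lattice case and the corresponding sum over $a\ell + h\Z$ in the $(h,a)$-lattice case; the finiteness and positivity of these limits define $C_{\mathrm{NL}}$ and $C_{\mathrm{L}}$.

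The main obstacle I expect is the sharp control of the ballot estimate uniformly over the moving endpoint $x_n = O(\sqrt n)$: classical ballot results are stated either for bounded endpoints or for walks with fixed barrier height, and here one needs either a Sakhanenko-type strong approximation with explicit error terms or a refined local limit argument allowing the endpoint to range up to $\sqrt n$. A secondary difficulty is making the second moment argument quantitative enough to preserve the explicit constant; in the lattice case, one must additionally control oscillations in the local CLT, which ultimately forces the quantization condition $x_n + m_n \in an + h\Z$ in the statement.
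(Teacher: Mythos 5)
Your overall toolbox (barrier arguments, many-to-one, ballot estimates, truncated second moment) is the right one, and you are correct that the cancellation $\e^{-m_n} = n^{3/2}$ together with $x_n = O(\sqrt n)$ is what makes the powers of $n$ balance. However, there is a genuine conceptual gap in the central structural claim of your proposal: you assert that the initial block $[0,\ell]$ contributes the derivative martingale $Z_\infty$, ``absorbed via $\E[Z_\infty]=1$ under the tilted measure.'' This is wrong on two counts. First, $\E[Z_n]=0$ by the many-to-one formula, and $Z_n$ is not uniformly integrable, so $\E[Z_\infty]$ is not $1$ (it is infinite in the nontrivial cases). Second, and more importantly, the derivative martingale does \emph{not} appear in the moderate-deviation asymptotics at all --- this is precisely what distinguishes the moderate-deviation regime from the typical regime of A\"id\'ekon. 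The paper's key reduction (Proposition~\ref{prop:expectation_equivalent}) is that $\P(M_n > m_n + x_n) \sim \E[\Lambda_{n,\ell}]$: because the target event is vanishingly rare, its probability agrees with a \emph{first moment}, which is a quantity for a single spine random walk where no martingale limit enters. Your proposed three-block decomposition $[0,\ell]\cup[\ell,n-\ell]\cup[n-\ell,n]$ with a free initial block whose role is to produce $Z_\infty$ is the structure of the proof of A\"id\'ekon's convergence theorem, not of the moderate-deviation estimate; here the barrier runs all the way from time $0$ (so the ``initial block'' is tied down to the origin, not left free), and the only place where a secondary time split $\lfloor \lambda(n-\ell)\rfloor$ appears is a technical two-level barrier used in the lower bound on $\E[\Lambda_{n,\ell}]$, not a derivative-martingale block.

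A secondary point: the uniformity over endpoints $x \le O(\sqrt n)$ that you flag as the ``main obstacle'' does not in fact require a Sakhanenko-type coupling; the paper imports it ready-made from the ballot-type local limit theorems of Doney and Pain (Lemmas~\ref{lem:doney} and~\ref{lem:pain}), which already give the required uniformity on $[D^{-1}\sqrt n, D\sqrt n]$ and $o(\sqrt n)$, glued together by an elementary subsequence argument. You are right that Assumption~\eqref{eq:assumption_peeling_lemma} enters through a peeling-type control, but the mechanism is specific: it is used to show that restricting $\Lambda_{n,\ell}$ to particles $u$ whose ancestors have controlled $\xi(u_j)=\log_+\sum_{v\in\Omega(u_j)}(1+(V(u_j)-V(v))_+)\e^{V(v)-V(u_j)}$ costs only an $o_K(1)$ fraction of the first moment (Lemma~\ref{lem:peeling_lemma}), which is what makes the second moment finite and asymptotically equal to the first moment (Lemma~\ref{lem:equivalent_Lambda_Lambda2}). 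Without that truncation the second moment of $\Lambda_{n,\ell}$ can be infinite, so a ``truncated second moment argument'' in the vague sense you describe would not close.
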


\begin{remark}
    The constant~$C_{\mathrm{NL}}$ is also the one appearing in A\"id\'ekon's main result~\cite[Theorem~1.1]{Aidekon2013}, in view of Corollary~\ref{cor:tail_distribution} below and~\cite[Proposition~1.3]{Aidekon2013}.
    This constant actually appears in a lot of papers on both branching random walks and branching Brownian motion, \eg in~\cite{BramsonDingZeitouni2016,ArguinBovierKistler2013,BovierHartung2020,Chataignier2025} with the formulation~\eqref{eq:C_NL} and in~\cite{Madaule2017,LalleySellke1987,ChauvinRouault1990,AidekonBerestyckiBrunetShi2013} without such a formulation.
\end{remark}

\begin{remark}\label{rem:unification}
    We will see in~\eqref{eq:unification_limsup} that~\eqref{eq:moderate_deviation_nl} and~\eqref{eq:moderate_deviation_l} can be unified in the following way.
    Let~$h$ denote the lattice span in the lattice case, and $h=0$ in the non-lattice case.
    Then,
    \begin{equation*}
            \P(M_n > m_n + x_n) \sim \rho(h) C^* x_n \e^{-x_n - x_n^2/(2n\sigma^2)},
        \end{equation*}
        where
    \begin{equation}\label{eq:unification}
        \rho(h) = \begin{cases}
            \frac{h}{\e^h - 1} & \text{if } h > 0, \\
            1 & \text{if } h = 0,
        \end{cases} \quad \text{and} \quad C^* = \frac{1}{\sigma^3} \sqrt{\frac{2}{\pi}} \lim_{\ell \to \infty} \int_0^\infty y \e^y \P(M_\ell>y) \d{y} \in (0,\infty).
    \end{equation}
\end{remark}

Since Theorem~\ref{th:moderate_deviation} holds for any sequence $(x_n)$ going to infinity arbitrarily slowly, an argument by contradiction (see Lemma~\ref{lem:from_sequence_to_large_constant}) yields the following asymptotic for the tail distribution of $M_n$.

\begin{corollary}\label{cor:tail_distribution}
    Take the same assumptions as in Theorem~\ref{th:moderate_deviation}.
    \begin{enumerate}
        \item If the branching random walk in non-lattice, then
        \begin{equation*}
            \limsup_{x \to \infty} \limsup_{n \to \infty} \left| \frac{\P(M_n > m_n + x)}{C_{\mathrm{NL}} x \e^{-x}} - 1 \right| = 0.
        \end{equation*}
        \item If the branching random walk in $(h,a)$-lattice, then
        \begin{equation*}
            \limsup_{x \to \infty} \limsup_{n \to \infty} \left| \frac{\P(M_n > m_n + x)}{C_{\mathrm{L}} n^{-3/2} x \e^{-[m_n+x]_n}} - 1 \right| = 0,
        \end{equation*}
        where $[y]_n$ is the greatest number in $an+h\Z$ such that $[y]_n \le y$, \ie $[y]_n = an + h \lfloor (y-an)/h \rfloor$.
    \end{enumerate}
\end{corollary}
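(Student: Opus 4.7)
The plan is to deduce both parts of Corollary~\ref{cor:tail_distribution} from Theorem~\ref{th:moderate_deviation} by a contradiction argument: the ``bad'' pairs $(x,n)$ witnessing the failure of the double $\limsup$ are assembled into a single sequence $(y_n)$ to which the theorem can be applied.

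Consider the non-lattice case first. Suppose the conclusion fails; then there exist $\vep > 0$ and $x_k \to \infty$ such that $\limsup_{n \to \infty} \bigl| \P(M_n > m_n + x_k)/(C_{\mathrm{NL}} x_k \e^{-x_k}) - 1 \bigr| \ge 2\vep$. For each $k$, I can then choose an index $n_k$, strictly increasing in $k$, such that the expression inside $|\cdot|$ at $(x_k, n_k)$ has absolute value at least $\vep$ and moreover $n_k \ge k x_k^2$. Define the step sequence $y_n \coloneq x_k$ for $n_k \le n < n_{k+1}$. Then $y_n \to \infty$ and $y_n \le \sqrt{n/k}$ for $n \in [n_k, n_{k+1})$, so in particular $y_n = O(\sqrt n)$. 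Theorem~\ref{th:moderate_deviation}(i) gives $\P(M_n > m_n + y_n) \sim C_{\mathrm{NL}} y_n \e^{-y_n - y_n^2/(2n\sigma^2)}$, and evaluating along $n = n_k$ the Gaussian correction $\e^{-x_k^2/(2n_k\sigma^2)} \to 1$ by the choice $n_k \ge k x_k^2$. Hence $\P(M_{n_k} > m_{n_k} + x_k)/(C_{\mathrm{NL}} x_k \e^{-x_k}) \to 1$, contradicting the definition of $(n_k)$.

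The lattice case works by the same mechanism once one accounts for the discretization. Since $M_n$ is valued in $an + h\Z$, one has $\P(M_n > m_n + x) = \P(M_n > [m_n + x]_n)$, and the identity $\e^{m_n} = n^{-3/2}$ yields $n^{-3/2} \e^{-[m_n+x]_n} = \e^{-([m_n+x]_n - m_n)}$, so the denominator in (ii) equals $C_{\mathrm{L}} x \e^{-y_n(x)}$ with $y_n(x) \coloneq [m_n + x]_n - m_n \in [x-h, x]$. I therefore replace the previous construction by $y_n \coloneq [m_n + x_k]_n - m_n$ for $n \in [n_k, n_{k+1})$; this preserves $y_n \to \infty$ and $y_n = O(\sqrt n)$, and it enforces the lattice condition $y_n + m_n \in an + h\Z$ required by Theorem~\ref{th:moderate_deviation}(ii). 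Applying the theorem and evaluating along $n = n_k$ now produces the ratio $y_{n_k}/x_k \in [1 - h/x_k, 1]$, which tends to $1$ as $k \to \infty$, again contradicting the assumption.

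I do not expect a serious obstacle: the only non-trivial step is the bookkeeping involved in turning the two-parameter failure of the corollary into a single sequence $(y_n)$ satisfying the hypotheses of Theorem~\ref{th:moderate_deviation}, while leaving room (via $n_k \ge k x_k^2$) for the Gaussian correction $\e^{-y_n^2/(2n\sigma^2)}$ to vanish along $(n_k)$. This packaging is exactly the content of the auxiliary Lemma~\ref{lem:from_sequence_to_large_constant}.
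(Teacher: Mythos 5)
Your proposal is correct and takes essentially the same approach as the paper: the paper proves Corollary~\ref{cor:tail_distribution} by citing Lemma~\ref{lem:from_sequence_to_large_constant}, and your contradiction argument is precisely the content of that lemma (as you note at the end), with the extra observation that choosing $n_k \ge k x_k^2$ forces the Gaussian factor $\e^{-y_n^2/(2n\sigma^2)}$ to tend to $1$ along the constructed sequence, and with the correct lattice bookkeeping $\P(M_n > m_n + x) = \P(M_n > [m_n+x]_n)$ together with $\e^{m_n} = n^{-3/2}$ to rewrite the denominator in part~(ii).
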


In the non-lattice case, the above asymptotic was already present in the works of A\"id\'ekon~\cite[Proposition~1.3]{Aidekon2013} and Bramson, Ding, and Zeitouni~\cite[Proposition~3.1]{BramsonDingZeitouni2016}.
However, the former did not provide the formulation~\eqref{eq:C_NL} for the constant~$C_{\mathrm{NL}}$, while the latter did, but worked under stronger assumptions.

Since Theorem~\ref{th:moderate_deviation} is stated for general choices of the sequence $(x_n)$, a similar argument to that of Corollary~\ref{cor:tail_distribution} (see Lemma~\ref{lem:from_sequence_to_uniformity}) shows that the asymptotic equivalent also holds uniformly for~$x_n$, in the following sense.

\begin{corollary}\label{cor:uniform_convergence}
    Take the same assumptions, the same notation as in Theorem~\ref{th:moderate_deviation}, and fix two sequences $(a_n)$ and $(b_n)$ such that $a_n \le b_n$, $a_n \to \infty$, and $b_n = O(\sqrt{n})$ as $n \to \infty$.
    Then,
    \begin{equation*}
        \limsup_{n \to \infty} \sup_{x \in A_n} \left| \frac{\P(M_n > m_n + x)}{C_\circ x \e^{-x - x^2/(2n\sigma^2)}} - 1 \right| = 0,
    \end{equation*}
    where
    \begin{enumerate}
        \item in the non-lattice case, $C_\circ = C_{\mathrm{NL}}$ and $A_n = [a_n,b_n]$,
        \item in the $(h,a)$-lattice case, $C_\circ = C_{\mathrm{L}}$ and $A_n = [a_n,b_n] \cap \{x : m_n + x \in an + h\Z\}$.
    \end{enumerate}
\end{corollary}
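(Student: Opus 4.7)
The plan is to argue by contradiction, reducing the uniform statement to the single-sequence statement of Theorem~\ref{th:moderate_deviation} via a diagonal-type extraction. This is precisely the strategy announced in the statement through Lemma~\ref{lem:from_sequence_to_uniformity}, and it parallels the argument underlying Corollary~\ref{cor:tail_distribution}.

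Suppose the conclusion fails. Then there exist $\vep > 0$, a subsequence $(n_k)_{k \ge 1}$ of positive integers, and for each~$k$ an element $x_{n_k} \in A_{n_k}$ such that
\[
\left| \frac{\P(M_{n_k} > m_{n_k} + x_{n_k})}{C_\circ\, x_{n_k}\, \e^{-x_{n_k} - x_{n_k}^2/(2n_k\sigma^2)}} - 1 \right| \ge \vep.
\]
Since $a_{n_k} \le x_{n_k} \le b_{n_k}$ with $a_n \to \infty$ and $b_n = O(\sqrt{n})$, we have $x_{n_k} \to \infty$ and $x_{n_k} = O(\sqrt{n_k})$.

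Next, I extend $(x_{n_k})_k$ to a full sequence $(\tilde{x}_n)_{n \ge 1}$ satisfying the hypotheses of Theorem~\ref{th:moderate_deviation}. Set $\tilde{x}_{n_k} = x_{n_k}$ for each~$k$. For the remaining indices, in the non-lattice case set $\tilde{x}_n = a_n$; in the $(h,a)$-lattice case set $\tilde{x}_n$ to be the smallest real number that is at least $a_n$ and satisfies $m_n + \tilde{x}_n \in an + h\Z$, so that $a_n \le \tilde{x}_n < a_n + h$. In either case, $\tilde{x}_n \to \infty$ and $\tilde{x}_n = O(\sqrt{n})$, and in the lattice case the condition $m_n + \tilde{x}_n \in an + h\Z$ holds at every index (at each $n_k$ since $x_{n_k} \in A_{n_k}$, and at all other indices by construction). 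Applying Theorem~\ref{th:moderate_deviation} to $(\tilde{x}_n)$ yields
\[
\frac{\P(M_n > m_n + \tilde{x}_n)}{C_\circ\, \tilde{x}_n\, \e^{-\tilde{x}_n - \tilde{x}_n^2/(2n\sigma^2)}} \xrightarrow[n \to \infty]{} 1,
\]
and restricting this convergence to the subsequence $(n_k)$ contradicts the choice of $(x_{n_k})$.

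The argument is essentially routine once this contradiction framework is in place; the only mild subtlety is, in the lattice case, extending $(x_{n_k})$ to a full sequence that still lies in $an + h\Z - m_n$ for every $n$. The rounding construction above handles this without disturbing the $O(\sqrt{n})$ growth, and no issue arises when some $A_n$ is empty, since such indices do not appear in the contradicting subsequence.
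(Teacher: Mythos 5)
Your argument is correct and coincides with the paper's intended approach: the paper invokes the abstract extraction lemma (Lemma~\ref{lem:from_sequence_to_uniformity}), whose proof is exactly the contradiction-plus-diagonal-extraction you carry out, and you additionally spell out the (minor) adaptation needed in the lattice case to extend $(x_{n_k})$ to a full admissible sequence, which the paper leaves as "a similar argument."
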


Now, let us formulate two conjectures.
Note that, if the branching random walk is non-lattice and if we set $f(y) = \mathds{1}_{\{y \le 0\}}$, then Theorem~\ref{th:moderate_deviation} states that
\begin{equation*}
    \E \left[ 1 - \prod_{|u|=n} f(V(u)-m_n-x_n) \right] \sim C_{\mathrm{NL}} x_n \e^{-x_n - x_n^2/(2n\sigma^2)}.
\end{equation*}
Considering $\phi \coloneq -\log f$, this makes a link with the \emph{extremal process}, \ie the point process formed by the branching random walk seen from $m_n$, and suggests the following statement.

\begin{conjecture}\label{conj:laplace_estimate}
    Assume~\eqref{eq:assumption_supercriticality}, \eqref{eq:assumption_boundary_case}, \eqref{eq:assumption_gaussianity}, \eqref{eq:assumption_peeling_lemma}, and the branching random walk to be non-lattice.
    Let $\phi \colon \R \to \R$ be continuous with compact support and $(x_n)$ be a sequence such that $x_n \to \infty$ and $x_n = O(\sqrt{n})$ as $n \to \infty$.
    Then, as $n \to \infty$,
    \begin{equation*}
        \E \left[ 1 - \e^{-\sum_{|u|=n} \phi(V(u)-m_n-x_n)} \right] \sim C_{\mathrm{NL}}(\phi,a) x_n \e^{-x_n - x_n^2/(2n\sigma^2)},
    \end{equation*}
    where
    \begin{equation*}
        C_{\mathrm{NL}}(\phi) \coloneq \frac{1}{\sigma^3}\sqrt{\frac{2}{\pi}}\lim_{\ell\to\infty}\int_0^\infty y \e^{y} \E \left[ 1 - \e^{-\sum_{|u|=\ell} \phi(V(u)-y)} \right] \d{y}.
    \end{equation*}
\end{conjecture}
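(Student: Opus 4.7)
The plan is to adapt the Bramson--Ding--Zeitouni strategy used in the proof of Theorem~\ref{th:moderate_deviation} to the Laplace-functional setting. First observe that
\[
    1 - \e^{-\sum_{|u|=n}\phi(V(u)-m_n-x_n)} = 1 - \prod_{|u|=n} \e^{-\phi(V(u)-m_n-x_n)},
\]
which has the same ``$1$ minus product'' structure as $\mathds{1}_{\{M_n > m_n+x_n\}} = 1 - \prod_{|u|=n} \mathds{1}_{\{V(u) \le m_n+x_n\}}$, with the boolean function $\mathds{1}_{\{\cdot\le 0\}}$ replaced by $\e^{-\phi}$, which equals $1$ outside $\operatorname{supp}\phi$. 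Since $\phi$ has compact support, say in $[-K,K]$, only particles $u$ with $V(u)-m_n-x_n \in [-K,K]$ contribute, so the relevant event is still localised near the level $m_n+x_n$, as for $\{M_n > m_n+x_n\}$.

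Next, apply the BDZ peeling/spine decomposition. Fix a large parameter~$\ell$ and split the genealogy at generation $n-\ell$. Conditional on an ancestor~$v$ at generation $n-\ell$ with position $V(v) = m_n+x_n-y$, its descendants at generation~$n$ form an independent copy of the branching random walk of length~$\ell$ translated by $V(v)$, whose contribution to the Laplace functional is
\[
    g_\ell(y) \coloneq \E\!\left[ 1 - \e^{-\sum_{|u|=\ell}\phi(V(u)-y)} \right],
\]
which plays the role of $\P(M_\ell > y)$ in the original argument. Showing that essentially one such ancestor is relevant (the core of BDZ peeling, via $L^2$ truncations of the derivative martingale and a second moment argument), and then applying the many-to-one lemma together with the local central limit theorem for the tilted random walk, should yield
\[
    \E\!\left[ 1 - \e^{-\sum_{|u|=n}\phi(V(u)-m_n-x_n)} \right] \sim \frac{x_n \e^{-x_n-x_n^2/(2n\sigma^2)}}{\sigma^3}\sqrt{\frac{2}{\pi}} \int_0^\infty y \e^{y} g_\ell(y) \d{y} \cdot (1+o_\ell(1)),
\]
after which one sends $\ell \to \infty$.

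The main obstacle is to prove that $C_{\mathrm{NL}}(\phi)$ is well defined, positive, and finite, and to control the errors when $\P(M_\ell > \cdot)$ is replaced by $g_\ell$ throughout the peeling argument. For each fixed~$\ell$, the integrand $y\e^y g_\ell(y)$ is integrable thanks to the superexponential tail of~$M_\ell$ at fixed generation; for uniform control as $\ell \to \infty$, one needs to adapt the tail bounds used for $M_\ell$ in the proof of Theorem~\ref{th:moderate_deviation}, whose key input is Hu's estimate~\eqref{eq:hu_upper_bound}, to the functional~$g_\ell$. When $\phi \ge 0$, one has the comparison $g_\ell(y) \le \P(M_\ell \ge y-K)$; the general case follows by writing $\phi = \phi_+ - \phi_-$. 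Existence of the limit then follows by combining Madaule's convergence of the shifted extremal process~\cite{Madaule2017} (yielding pointwise convergence of $g_\ell$ via continuity and compact support of~$\phi$) with a dominated convergence argument based on the above tail bounds; positivity for nontrivial $\phi \ge 0$ follows from an indicator lower bound. The remaining steps should mirror those of Theorem~\ref{th:moderate_deviation} and require no substantially new idea.
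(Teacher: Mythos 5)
This statement is a \emph{conjecture} in the paper: the authors offer no proof of it, and indeed describe it as a prerequisite for future work on the extremal process. Your outline must therefore be judged on its own; it cannot be ``the same approach as the paper'' because there is no proof to match. The overall plan (replace $\P(M_\ell > y)$ by $g_\ell(y) = \E[1 - \e^{-\sum_{|u|=\ell}\phi(V(u)-y)}]$ and rerun the Bramson--Ding--Zeitouni peeling) is indeed the natural route, but two of your proposed resolutions of the stated obstacles do not hold up.

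First, your mechanism for establishing existence of the limit defining $C_{\mathrm{NL}}(\phi)$ is wrong. You propose to use Madaule's convergence of the shifted extremal process together with dominated convergence. But Madaule's result controls $\sum_{|u|=\ell}\phi(V(u)-m_\ell-x)$ at $O(1)$ distance from $m_\ell$, i.e.\ it gives convergence of $g_\ell(m_\ell+x)$ for fixed $x$; for fixed $y>0$ one instead has $g_\ell(y)\to 0$ since the front recedes like $m_\ell\to -\infty$. The mass of $\int_0^\infty y\e^y g_\ell(y)\d{y}$ lives at $y\asymp\sqrt{\ell}$ (this is exactly the content of Proposition~\ref{prop:domain} in the indicator case), so pointwise convergence plus dominated convergence yields $0$, not a positive limit. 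In the paper's proof of Theorem~\ref{th:moderate_deviation}, existence of the analogous limit is the genuinely hard point and is settled by a bootstrap: one first proves the equivalence with the $\limsup$-constant $\bar C_{\mathrm{NL}}$, then exploits that equivalence along a sequence $(\ell_n)$ taking all integer values to force $\limsup=\liminf$. You would need the analog of this bootstrap, together with analogs of Proposition~\ref{prop:finite_integral} and Lemma~\ref{lem:negligible_integral} for $g_\ell$; the comparison $g_\ell(y)\le\P(M_\ell>y-K)$ handles the finiteness part for $\phi\ge 0$, but not the convergence part, which is what you actually need.

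Second, the lower bound does not transfer as painlessly as you suggest. The second-moment step in Section~\ref{sct:equivalent_expectation} applies $\P(\Lambda\ge 1)\ge\E[\Lambda]^2/\E[\Lambda^2]$ to the \emph{event} $\{\Lambda_{n,\ell}\ge 1\}$, whose indicator matches $\{M_n>m_n+x_n\}$ up to $G_{n,\ell}$. But $1-\e^{-\sum\phi}$ is not the indicator of an event, so Cauchy--Schwarz in this form is not available; you would instead need an inclusion--exclusion bound $1-\prod_u f_u \ge \sum_u (1-f_u) - \sum_{u<v}(1-f_u)(1-f_v)$, valid only when all $f_u\in[0,1]$, i.e.\ only when $\phi\ge 0$. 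For the same reason, ``write $\phi=\phi_+-\phi_-$'' does not split $g_\ell$: the map $\phi\mapsto g_\ell$ is neither additive nor monotone, and if $\phi$ takes negative values then $g_\ell(y)$ can itself be negative, breaking most of the inequalities you invoke. The conjecture as stated allows sign-changing $\phi$; your sketch should either restrict to $\phi\ge 0$ (and then carry out the Bonferroni estimate carefully through the truncated version $\Lambda_{n,\ell,K}$) or explain how to handle the general case, neither of which is done.

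In short, the skeleton is right but the two points you wave at as ``remaining steps should mirror those of Theorem~\ref{th:moderate_deviation}'' are precisely the ones that require new ideas, and one of the two fixes you do propose (Madaule plus dominated convergence) cannot work.
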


The extremal process has been studied by Madaule~\cite{Madaule2017}.
He showed that, conditionally on non-extinction, with respect to the vague topology, it converges in law to a decorated Poisson point process, \ie a Poisson point process to which we attach, at each atom~$x$, a point process~$\cD^{(x)}$ shifted by~$x$, where the~$\cD^{(x)}$ are independent copies of some point process~$\cD$, called \emph{decoration}.
This result is the analog of the main results from~\cite{ArguinBovierKistler2013,AidekonBerestyckiBrunetShi2013} for branching Brownian motion.
However, contrary to the latter, no characterization of the law of~$\cD$ has been established so far for branching random walks.

We believe that Theorem~\ref{th:moderate_deviation} and Conjecture~\ref{conj:laplace_estimate} will allow us to adapt the proof of Arguin, Bovier, and Kistler~\cite{ArguinBovierKistler2013} to branching random walks.
Our statements would play the same role as sharp estimates for solutions of the F--KPP equation, used \eg to prove~\cite[Theorem~3.4]{ArguinBovierKistler2013}.
One consequence would be a characterization of the decoration~$\cD$ as a branching random walk conditioned on a moderate deviation of its maximum.

\subsection{Strategy of the proof}

Let us discuss our strategy for the proof of Theorem~\ref{th:moderate_deviation}.
A classical technique for studying branching processes is to compute expectations of certain functionals thanks to the many-to-one formula or the spinal decomposition (introduced in Section~\ref{sct:spinal_decomposition} below).
It allows to reduce the computation to an expectation involving only one random walk.
A naive idea for the upper bound in Theorem~\ref{th:moderate_deviation} would be to apply the union bound
\begin{equation*}
    \P(M_n > m_n+x_n) \le \E \left[ \sum_{|u|=n} \mathds{1}_{\{V(u) > m_n+x_n\}} \right],
\end{equation*}
and then the many-to-one formula.
This actually yields the correct asymptotic order, up to polynomial factors.
The expectation turns out to be inflated by an atypical event (more atypical than $\{M_n > m_n+x_n\}$) where some particles reach really high positions before time~$n$.
As done by Mallein~\cite{Mallein2016}, we can truncate the process by adding a suitable barrier: only count particles that are above $m_n+x_n$ at time~$n$ while having stayed below $f_n(k)+x_n = m_n-m_{n-k}+x_n$ for all times $k<n$.
This yields the correct asymptotic order.

To obtain an asymptotic equivalent, we need a refinement of this truncation method.
Such a refinement was developed by Bramson, Ding, and Zeitouni~\cite{BramsonDingZeitouni2016} to prove the convergence of $M_n-m_n$.
In this paper, we adapt their approach in the same way as~\cite{Chataignier2025}, but with additional technical difficulties due to our close-to-optimal assumptions.
We consider an intermediate time~$\ell = \ell_n$ such that $\ell \to \infty$ as $n \to \infty$.
We denote by~$\Lambda_{n,\ell}$ the number of particles alive at time~$n-\ell$ whose ancestors have stayed below $f_{n-\ell}(k)+x_n$ for all times $k \le n-\ell$ and have a descendant reaching $m_n+x_n$ at time~$n$.
Using the upper and lower bounds~\eqref{eq:hu_upper_bound} and~\eqref{eq:mallein_lower_bound}, we obtain $\P(M_n > m_n+x_n) \le (1+o_n(1)) \E[\Lambda_{n,\ell}]$.

Regarding the reverse inequality, note that, by Cauchy--Schwarz inequality, we have
\begin{equation*}
    \P(M_n > m_n+x_n) \ge \P(\Lambda_{n,\ell} \ge 1) \ge \E[\Lambda_{n,\ell}]^2 / \E[\Lambda_{n,\ell}^2].
\end{equation*}
One idea is then to show that the first and second moments of~$\Lambda_{n,\ell}$ are asymptotically equivalent.
An issue is that the second moment of~$\Lambda_{n,\ell}$ may be too large, or even infinite, inflated by rare events where particles make too large a progeny.
To overcome this, we restrict~$\Lambda_{n,\ell}$ to particles that do not have ``too many children too high''.
Our restriction provides a good approximation for which the above second moment method applies.

It then remains to compute an asymptotic equivalent for $\E[\Lambda_{n,\ell}]$.
We achieve this by using results from Doney~\cite{Doney2012} and Pain~\cite{Pain2018} which provide, for a centered random walk $(S_n)_{n \ge 0}$ with finite variance~$\sigma^2$, asymptotic estimates of the form
\begin{equation}\label{eq:random_walk_estimates}
    \P(\forall k \le n, S_k \le x_n, S_n \in (x_n-y-h,x_n-y]) \approx C \frac{x_n y h}{n^{3/2}}\e^{-x_n^2/(2n\sigma^2)},
\end{equation}
for some range of $y>0$ and $h>0$.

Contrary to branching Brownian motion~\cite{Chataignier2025}, if $x_n/\sqrt{n} \to \infty$ and $x_n = o(n)$, then, under our assumptions, the asymptotic behavior of $\P(M_n > m_n+x_n)$ cannot be described with a universal formulation such as in Theorem~\ref{th:moderate_deviation}.
This is due to the fact that, in this case, the left-hand side of~\eqref{eq:random_walk_estimates} depends on the tail distribution of~$S_1$.
The trajectories that give weight to this probability exhibit a full range of behaviors: it can be realized either through a collective contribution of all increments (the \emph{Gaussian regime}), or through the contribution of a single large ``jump'' (the \emph{one big jump regime}), or even through intermediate scenarios involving moderate jumps (see \eg~\cite{BergerBirknerYuan2024}).
In a word, there is no universality anymore.
This suggests that a higher moment condition than~\eqref{eq:assumption_gaussianity} is required.
We believe that it is possible to adapt the strategy of Bramson, Ding, and Zeitouni~\cite{BramsonDingZeitouni2016} for the regime where $x_n/\sqrt{n} \to \infty$ and $x_n = o(n)$ by assuming that there exists $\vep\in(0,1)$ such that $\E[\sum_{|u|=1}\e^{\theta V(u)}]<\infty$ for any $\theta \in [1-\vep,1+\vep]$.
In this paper, in order to state the result under the close-to-optimal assumption~\eqref{eq:assumption_gaussianity}, we only consider the regime where $x_n \to \infty$ and $x_n = O(\sqrt{n})$.

The rest of the paper is organized as follows.
In Section~\ref{sct:preliminaries}, we first introduce the construction of a branching random walk and the spinal decomposition theorem.
Then we introduce some properties of the renewal functions and trajectory estimates of a random walk.
In Section~\ref{sct:integral}, we aim to prove the finiteness of constants~$C_{\mathrm{NL}}$ and~$C_{\mathrm{L}}$ (though their existence is established later), and study the asymptotic behavior of $\int_{D_\ell}y\e^y\P(M_\ell>y)$ as $\ell\to\infty$, for appropriate domains $D_\ell\subset[0,\infty)$.
In Section~\ref{sct:non_contributing_particles}, we exclude some particle behaviors, which are much more atypical than the event $\{M_n>m_n+x_n\}$.
In Section~\ref{sct:equivalent_expectation}, we focus on proving that $\P(M_n>m_n+x_n)$ is asymptotically equivalent to $\E[\Lambda_{n,\ell}]$.
In Section~\ref{sct:asymptotics_for_expectation}, we give sharp upper and lower bounds for $\E[\Lambda_{n,\ell}]$.
In Section~\ref{sct:proof_of_theorem}, we prove the existence of~$C_{\mathrm{NL}}$ and~$C_{\mathrm{L}}$, and complete the proof of Theorem~\ref{th:moderate_deviation}.
Finally, we present two applications of Theorem~\ref{th:moderate_deviation} in Section~\ref{sct:applications}.
The first one is an identification of the region of integration that contributes most to $\int y\e^y\P(M_\ell > y) \d{y}$.
The second one is the convergence in law of the maximum of a two-speed branching random walk in the mean regime.

\section{Preliminaries}\label{sct:preliminaries}

In this section, we give a formal construction of branching random walks.
We then introduce the spinal decomposition theorem and the many-to-one lemma, which relate a branching random walk to a single random walk.
At the end, we gather sharp estimates for random walks.

\subsection{Construction of a branching random walk}

Define the set of finite sequences of positive integers 
\[\T\coloneq\bigcup_{n=0}^\infty \N^{n},\]
with convention $\N^0\coloneq\{\varnothing\}$, where $\N\coloneq\{1,2,3,\ldots\}$.

Given a point process~$\cL$ on~$\R$, let~$\cL^{(u)}$ be \iid copies of~$\cL$.
If there exists some $\theta>0$ such that 
\[\E\left[\sum_{\ell\in\cL}\e^{\theta\ell}\right]<\infty,\]
which is the case with $\theta = 1$ under~\eqref{eq:assumption_boundary_case}, we know that there are only finitely many atoms of~$\cL$ located in $[x,\infty)$ for fixed $x\in\R$.
Therefore, we can rank the atoms, counted with their multiplicity, of~$\cL$ in a non-increasing fashion.
We write~$\cL^{(u)}$ as $(\ell_i^{(u)}:i \ge 1)$, where $(\ell_i^{(u)}:i \ge 1)$ is a non-increasing sequence generated by the atoms in~$\cL^{(u)}$, with the convention that $\ell_i^{(u)}\coloneq-\infty$ if $i>\cL^{(u)}(\R)$.
For $u\in \N^n$, we write $u=(u^{(1)},\ldots,u^{(n)})$ and denote by $|u|$  the generation of~$u$, \ie $|u|=n$.
For $0 \le k \le |u|$, we use $u_k\coloneq(u^{(1)},\ldots,u^{(k)})$ to express the ancestor of~$u$ at time~$k$, with the convention $u_0\coloneq\varnothing$.
With the help of the notation above, we can define a branching random walk $(V(u):u\in\T)$ as follows.
Let $V(\varnothing)=0$, for $u\in \T$ such that $|u| \ge 1$,
\[V(u)\coloneq\sum_{i=0}^{|u|-1}\ell^{(u_i)}_{u^{(i+1)}}.\]
We treat~$u$ such that $V(u)=-\infty$ as dead particles, which do not contribute to the branching random walk.
In the rest of the article, we denote by $\{|u|=n\}$ the set of particles alive at time~$n$.

\subsection{Spinal decomposition: a change of measure}\label{sct:spinal_decomposition}

Here we introduce the spinal decomposition.
This is an important notion for the study of branching processes that we use a lot throughout this paper.
The idea is to describe the law of the branching random walk with an infinite ray of descendants, called spine, and a change of measure.
The main theorem of this section is the spinal decomposition theorem.
Before stating it, we present a similar but more elementary tool, which is also very useful in this paper: the many-to-one formula.

For $a \in \R$, let~$\P_a$ denote the law under which $(V(u) : u \in \T)$ is a branching random walk started from~$a$, and let~$\E_a$ denote the corresponding expectation.
We abbreviate~$\P_0$ and~$\E_0$ by~$\P$ and~$\E$.
Let $(S_n:n \ge 0)$ be a random walk such that, for any non-negative measurable function $h:\R\to[0,\infty)$,
\[\P_a(S_0=a)=1\quad\text{and}\quad\E_a[h(S_1-S_0)]=\E\left[\sum_{|u|=1}h(V(u))\e^{V(u)}\right].\]
We refer to $(S_n : n \ge 0)$ as the random walk associated with the branching random walk.
Note that, under Assumptions~\eqref{eq:assumption_boundary_case} and~\eqref{eq:assumption_gaussianity}, we have $\E[S_1] = 0$ and $\E[S_1^2]=\sigma^2$.

\begin{lemma}[Many-to-one formula, Theorem~1.1 of~\cite{Shi2015}]\label{lem:many_to_one}
For $a\in\R$, $n \ge 1$ and any non-negative measurable function $h:\R^n\to[0,\infty]$, we have 
\[\E_a\left[\sum_{|u|=n}h(V(u_1),\ldots,V(u_n))\e^{V(u)-a}\right]=\E_a\left[h(S_1,\ldots,S_n)\right].\]
\end{lemma}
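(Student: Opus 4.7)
The plan is induction on $n$, using the branching property of the BRW to peel off one generation at a time and matching it with the Markov property of the associated random walk $(S_k)$. The base case $n=1$ is essentially the definition of the law of $S_1$: under $\P_a$ the first-generation positions are $(a+\ell_i^{(\varnothing)})_{i \ge 1}$, so applying the defining identity $\E_a[h(S_1-a)] = \E[\sum_{\ell \in \cL} h(\ell)\,\e^{\ell}]$ to the function $y \mapsto h(a+y)$ immediately yields the $n=1$ formula.

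For the inductive step, I would split the sum over $|u|=n$ according to the first-generation ancestor $w=u_1$:
\begin{equation*}
    \sum_{|u|=n} h(V(u_1),\ldots,V(u_n))\, \e^{V(u)-a} = \sum_{|w|=1} \e^{V(w)-a} \sum_{\substack{|u|=n\\u_1=w}} h(V(u_1),\ldots,V(u_n))\, \e^{V(u)-V(w)}.
\end{equation*}
Conditioning on $\cF_1 \coloneq \sigma(V(w): |w|=1)$ and using that the subtree rooted at $w$ is an independent BRW started from $V(w)$, the inner conditional expectation equals, by the induction hypothesis applied at generation $n-1$ with starting point $V(w)$ and function $(y_1,\ldots,y_{n-1}) \mapsto h(V(w),y_1,\ldots,y_{n-1})$, the quantity $\E_{V(w)}[h(V(w),S_1,\ldots,S_{n-1})]$. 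Taking expectations and applying the $n=1$ case once more to the function $b \mapsto \E_b[h(b,S_1,\ldots,S_{n-1})]$ identifies the result with $\E_a\bigl[\E_{S_1}[h(S_1, S_1',\ldots,S_{n-1}')]\bigr]$, which coincides with $\E_a[h(S_1,\ldots,S_n)]$ by the Markov property of $(S_k)$.

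No substantial obstacle arises; the main care required is bookkeeping of the shift by $V(w)$ together with the implicit use of translation invariance of the BRW (the law $\P_b$ is the pushforward of $\P_0$ under the translation by $b$), and a standard monotone-class argument to extend from indicators to arbitrary non-negative measurable $h$. Both sides vanish on the event of extinction at generation $n$, and the $[0,\infty]$-valuedness of $h$ makes Fubini-type exchanges automatic, so no integrability assumption on $h$ is needed.
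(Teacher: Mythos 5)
Your proof is correct and follows the induction route that the paper itself flags as one of two standard derivations (the other being as a corollary of the spinal decomposition, Theorem~\ref{th:spinal_decomposition}). The paper does not spell out the induction, merely citing Shi~(2015), so there is no detailed argument to compare against; your base-case reduction to the defining identity of $S_1$, the peeling of the first generation via the branching property, and the final identification through the Markov property of $(S_k)$ are exactly the standard steps. One small point worth keeping in mind, which you handle correctly: the induction hypothesis is applied with a random starting point $V(w)$ and a test function that itself depends on $V(w)$; this is legitimate because, conditionally on $\hF_1$, $V(w)$ is a constant and the subtree rooted at $w$ is an independent BRW under $\P_{V(w)}$. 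The monotone-class remark and the observation that $[0,\infty]$-valuedness of $h$ spares any integrability hypothesis are also appropriate.
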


The many-to-one formula can be showed by induction.
It is also a consequence of the spinal decomposition theorem.
In order to state the latter, let $\hF_n\coloneq\sigma(V(u):|u| \le n)$ and $\hF_\infty\coloneq\sigma(V(u):u\in\T)$, and define
\begin{equation}\label{eq:additive_martingale}
    W_n\coloneq\sum_{|u|=n}\e^{V(u)},\quad n \ge 0.
\end{equation}
It is a martingale with respect to the filtration $(\hF_n:n \ge 0)$ and is usually called the (critical) additive martingale.
By Kolmogorov extension theorem, there exists a probability measure~$\bar{\P}$ on $\hF_\infty$ such that, for any $n \ge 1$ and $A\in\hF_n$, we have
\[\bar{\P}(A)=\E[W_n\mathds{1}_A],\]
with~$\bar{\E}$ the associated expectation.
The branching random walk $(V(u):u\in\T)$ under~$\bar{\P}$ can be seen as a size-biased branching random walk.

Lyons~\cite{Lyons1997} constructed the connection between the size-biased branching random walk and a branching random walk with a spine, which shows the genealogy structure and behavior of particles from a new point of view.
This method is an extension of the spinal decomposition introduced by Lyons, Pemantle, and Peres~\cite{LyonsPemantlePeres1995} for Galton--Watson process and Chauvin and Rouault~\cite{ChauvinRouault1988} for branching Brownian motion.

Let~$\hat{\cL}$ be a point process such that, for any non-negative measurable function~$f$, \[\E[f(\hat{\cL})]=\E\left[f(
\cL)\sum_{\ell\in\cL}\e^{\ell}\right].\]
A branching random walk $(V(u):u\in\T)$ with a spine $(w_n:n \ge 0)$ can be described as follows.
At time~$0$, particle $w_0\coloneq\varnothing$ stays at~$0$.
At each time~$n$, all particles die and give birth to their children independently.
The children of spine particle~$w_{n-1}$ are scattered according to the law of~$\hat{\cL}$, with respect to $V(w_{n-1})$.
The children of normal particles~$u$ are distributed as~$\cL$, with respect to the position of their parents.
The spinal particle~$w_n$ is chosen at random among the children of~$w_{n-1}$, by picking such a child~$v$ with probability proportional to~$\e^{V(v)}$.
We denote by~$\hat{\P}$ the law of the branching random walk with a spine, with~$\hat{\E}$ the associated expectation.
The following theorem tells us that the law of the size-biased branching random walk corresponds to the law of the branching random walk with a spine.

\begin{theorem}[Lyons' spinal decomposition theorem~\cite{Lyons1997}]\label{th:spinal_decomposition}
For $n \ge 0$, the law of $(V(u):|u| \le n)$ is identical under~$\bar{\P}$ and~$\hat{\P}$.
Moreover, for $|u|=n$, we have 
\[\hat{\P}(u=w_n|\hF_n)=\frac{\e^{V(u)}}{W_n},\]
and $(V(w_1),\ldots,V(w_n))$ under~$\hat{\P}$ is distributed as $(S_1,\ldots,S_n)$ under~$\P$.
\end{theorem}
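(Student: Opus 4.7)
My plan is to establish all three conclusions simultaneously by proving a single unifying identity: for every $n \ge 0$ and every bounded measurable $F$ on $\R^{\{|v| \le n\}}$ and $G$ on $\N^n$,
\begin{equation*}
    \hat{\E}\bigl[F\bigl((V(v))_{|v|\le n}\bigr)\, G(w_n)\bigr] = \E\biggl[\sum_{|u|=n} F\bigl((V(v))_{|v|\le n}\bigr)\, G(u)\, \e^{V(u)}\biggr]. \tag{$\star$}
\end{equation*}
All three parts of the theorem follow from $(\star)$. Taking $G \equiv 1$ gives $\hat{\E}[F] = \E[W_n F] = \bar{\E}[F]$, which is part~1. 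For part~2, if $F$ is $\hF_n$-measurable then the right-hand side of $(\star)$ equals $\E\bigl[W_n \cdot F \cdot \sum_{|u|=n} G(u) \e^{V(u)}/W_n\bigr]$, which by part~1 equals $\hat{\E}\bigl[F \sum_{|u|=n} G(u) \e^{V(u)}/W_n\bigr]$; hence $\hat{\E}[G(w_n) \mid \hF_n] = \sum_{|u|=n} G(u)\, \e^{V(u)}/W_n$. For part~3, taking $F \equiv 1$ and $G(u) = h(V(u_1),\ldots,V(u_n))$ turns the right-hand side of $(\star)$ into $\E[h(S_1,\ldots,S_n)]$ via the many-to-one formula (Lemma~\ref{lem:many_to_one}).

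I would prove $(\star)$ by induction on~$n$. The base case $n=0$ is trivial since $w_0 = \varnothing$ and $V(\varnothing)=0$. For the induction step, I condition on~$\hF_n$ and on the value of~$w_n$. By construction of $\hat{\P}$, given $\{w_n = u\}$ the displacements of the children of~$u$ relative to~$V(u)$ are distributed as~$\hat{\cL}$, the children of the other generation-$n$ particles are independent copies of~$\cL$, and $w_{n+1}$ is selected among the children of~$u$ with probability proportional to $\e^{V(\cdot)}$. Writing out the resulting conditional expectation of $F\, G(w_{n+1})$ and invoking the defining identity $\E[\Phi(\hat{\cL})] = \E[\Phi(\cL) \sum_{\ell \in \cL} \e^{\ell}]$, the Radon--Nikodym factor $\sum_{\ell \in \cL} \e^{\ell}$ exactly cancels the normalizer $\sum_{v : \mathrm{parent}(v)=u} \e^{V(v)-V(u)}$ coming from the spine-selection law. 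After multiplication by $\e^{V(u)}$, the conditional expectation becomes an ordinary $\P$-expectation over $\cL$-distributed offspring. Applying the induction hypothesis to the resulting $\hF_n$-measurable functional $\Psi(T_n,u)$ (with weight $\e^{V(u)}$ provided by $(\star)$ at level~$n$) and collapsing $\sum_{|u|=n}\sum_{v : \mathrm{parent}(v)=u}$ into $\sum_{|v|=n+1}$ yields $(\star)$ at level~$n+1$.

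The main obstacle is purely notational rather than conceptual: one must carefully track which offspring law applies to which generation-$n$ particle under~$\hat{\P}$ and verify that the size-bias of~$\hat{\cL}$ precisely cancels the denominator in the spine-selection probability, so that the weight $\e^{V(u)}$ produced by the induction hypothesis ends up promoted to $\e^{V(v)}$ for the new spine particle. Once this bookkeeping is transparent, the induction proceeds by routine manipulation.
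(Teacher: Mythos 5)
The paper does not prove this theorem; it is stated as a citation to Lyons (1997), so there is no "paper's own proof" to compare against. Your proposal is a correct, self-contained proof that follows the standard route (which is essentially Lyons' original one): reduce all three conclusions to a single weighted identity and establish it by induction on the generation, using the size-biasing identity $\E[\Phi(\hat{\cL})]=\E[\Phi(\cL)\sum_{\ell\in\cL}\e^{\ell}]$ to cancel the spine-selection normalizer.

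One small point deserves tightening. As stated, $(\star)$ only covers functionals of the factorized form $F((V(v))_{|v|\le n})\,G(w_n)$ with $G$ a function of the label alone. Both your application to part~3 (where you want $h(V(u_1),\dots,V(u_n))$, a function of positions along the ancestral line, not of the label) and your induction step (where the quantity $\Psi(T_n,u)$ produced after integrating out generation $n{+}1$ depends on the full tree up to generation~$n$ as well as on $u$) actually require the more general identity
\[
\hat{\E}\bigl[\Phi\bigl((V(v))_{|v|\le n},\,w_n\bigr)\bigr]=\E\biggl[\sum_{|u|=n}\Phi\bigl((V(v))_{|v|\le n},\,u\bigr)\,\e^{V(u)}\biggr]
\]
for all bounded measurable $\Phi$. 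You clearly have this in mind (your notation $\Psi(T_n,u)$ shows it), but it is worth either stating $(\star)$ directly in this form, or noting that the passage from product functionals $FG$ to general $\Phi$ is a routine monotone class argument. With that adjustment the induction is airtight: given $\hF_n$ and $w_n=u$, the conditional expectation of $\Phi(\cdot,w_{n+1})$ over the fresh generation uses the change of measure so that $\sum_{\ell\in\cL}\e^{\ell}$ cancels the denominator $\sum_{v:\text{parent}(v)=u}\e^{V(v)-V(u)}$, leaving $\e^{-V(u)}\sum_{v:\text{parent}(v)=u}\e^{V(v)}\Phi(\cdot,v)$; the prefactor $\e^{-V(u)}$ is then absorbed by the weight $\e^{V(u)}$ supplied by the induction hypothesis, and the double sum over $u$ and its children collapses to a sum over $|v|=n+1$.
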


\subsection{Random walk estimates}

We consider a centered random walk $(S_n)_{n \ge 0}$ on~$\R$ with variance $\sigma^2 \in (0, \infty)$.
In order to handle precise estimates, let us introduce some elements from fluctuation theory.
We define the strict ascending ladder epoch processes $(\tau_k^+)_{k \ge 0}$ and ladder height processes $(H_k^+)_{k \ge 0}$ as follows.
We set $\tau_0^+ = 0$, $H_0^+ = 0$, and for $k \ge 1$,
\begin{equation*}
    \tau_k^+ = \inf\{n > \tau_{k-1}^+ : S_n > S_{\tau_{k-1}^+}\} \quad \text{and} \quad H_k^+ = S_{\tau_k^+}.
\end{equation*}
We denote by~$R^+$ the renewal function associated with $(H_k^+)_{k \ge 0}$, defined for $t \ge 0$ by
\begin{equation*}
    R^+(t) = \sum_{k \ge 0} \P(H_k^+ \le t).
\end{equation*}
By Feller~\cite[Theorem~XVIII.5.1~(5.2) and renewal theorem p.~360]{Feller1971}, since $(S_n)$ is centered with finite variance, we have $\E \left[ H_1 \right] < \infty$, which implies that there exists $c^+ > 0$ such that
\begin{equation}\label{eq:feller}
    \lim_{t \to \infty} \frac{R^+(t)}{t} = c^+.
\end{equation}
By Kozlov~\cite[Theorem~A]{Kozlov1976}, there exists $\theta^+ > 0$ such that, for any $t \ge 0$,
\begin{equation*}
    \P(\forall k \le n, S_k \le t) \sim \frac{\theta^+ R^+(t)}{\sqrt{n}},
\end{equation*}
as $n \to \infty$.
A\"id\'ekon and Shi~\cite[Lemma~2.1]{AidekonShi2014} have showed that constants~$c^+$ and~$\theta^+$ are related through
\begin{equation}\label{eq:aidekon_shi}
    \theta^+ c^+ = \sqrt{\frac{2}{\pi \sigma^2}}.
\end{equation}
We also consider the strict descending ladder variables~$\tau_k^-$, $H_k^-$, $R^-$, $c^-$, $\theta^-$, the analogous quantities associated to the random walk $(-S_n)_{n \ge 0}$.

The following lemma is from Doney~\cite[Propositions~11, 18, and~24]{Doney2012}\footnote{Doney states the result in terms of the renewal function associated to the \emph{weak} descending ladder height process.
The link with the \emph{strict} descending ladder height process is explained by~Caravenna and Chaumont~\cite[Remark~4.6]{CaravennaChaumont2013}.
It leads to our formulation, which was already given in~\cite[Lemma~B.1]{Pain2018}.}.

\begin{lemma}[Doney~\cite{Doney2012}]\label{lem:doney}
    Let $D > 0$ and $(\gamma_n)_{n \ge 0}$ be any real sequence such that $0 < \gamma_n = o(\sqrt{n})$.
    Define, for $x \in \R$, $g(x) = x \e^{-x^2/2} \mathds{1}_{\{x>0\}}$.
    \begin{enumerate}
        \item If $(S_n)$ is non-lattice, then for any $h > 0$,
        \begin{equation*}
            \P(\forall k \le n, S_k \le x, S_n \in (x-y-h, x-y]) \sim \frac{\theta^-}{\sigma n} g \left( \frac{x}{\sigma \sqrt{n}} \right) \int_y^{y+h} R^-(t) \d{t},
        \end{equation*}
        uniformly in $x \in [D^{-1}\sqrt{n}, D\sqrt{n}]$ and in $y \in [0, \gamma_n]$.
        \item If $(S_n)$ is $(h,a)$-lattice, then
        \begin{equation*}
            \P(\forall k \le n, S_k \le x, S_n = x-y) \sim \frac{\theta^-}{\sigma n} g \left( \frac{x}{\sigma \sqrt{n}} \right) h R^-(y),
        \end{equation*}
        uniformly in $x \in [D^{-1}\sqrt{n}, D\sqrt{n}]$ and in $y \in [0, \gamma_n]$ such that $x-y \in an+h\Z$.
    \end{enumerate}
\end{lemma}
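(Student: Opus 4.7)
The statement is a local limit theorem for a random walk $(S_n)$ constrained to stay below a barrier $x\asymp\sqrt{n}$ while ending near $x-y$. My plan is to follow the time-reversal plus Wiener--Hopf strategy underlying Doney's Propositions~11, 18, and~24 of~\cite{Doney2012}, making explicit the correspondence between his formulation (based on the renewal function of the \emph{weak} descending ladder height process) and the one above (based on the \emph{strict} one, via Caravenna--Chaumont~\cite{CaravennaChaumont2013}).

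First, I would apply time reversal: with $\tilde{S}_k\coloneq S_n-S_{n-k}$, the process $(\tilde{S}_k)_{0\le k\le n}$ has the same law as $(S_k)_{0\le k\le n}$, and on $\{S_n\in(x-y-h,x-y]\}$ the barrier event $\{\forall k\le n,\ S_k\le x\}$ is equivalent to $\{\forall k\le n,\ \tilde{S}_k\ge-y+O(h)\}$. This converts the problem into a local limit probability for a walk started at $0$, conditioned to stay above $-y$, and terminating in $(x-y-h,x-y]$ (respectively, at $x-y$ in the lattice case).

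Second, I would decompose the reversed trajectory at its global minimum, attained at some time $\rho$. Classical fluctuation identities factor the probability into a pre-minimum part, which is a strict descending ladder walk whose terminal ladder height falls in $[y,y+h]$ (respectively, equals $y$), contributing the renewal factor $\int_y^{y+h}R^-(t)\d{t}$ (respectively, $hR^-(y)$ via the lattice renewal theorem), and an independent post-minimum part conditioned to stay strictly positive for $n-\rho$ steps and ending at level $x-y-\tilde{S}_\rho$. Because $y=o(\sqrt{n})$, the ladder height $-\tilde{S}_\rho\in[y,y+h]$ forces $\rho=o(n)$ in probability, so the post-minimum bridge of length $n(1+o(1))$ lies in the full Gaussian regime. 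Gnedenko's local CLT combined with the Kozlov--Caravenna asymptotic for a walk conditioned to stay positive (which produces a factor $\theta^+ x/\sqrt{n}$), and the identity $\theta^+c^+=\sqrt{2/(\pi\sigma^2)}$ recalled in~\eqref{eq:aidekon_shi}, yield the prefactor $(\sigma n)^{-1}g(x/(\sigma\sqrt{n}))$ with the correct constant $\theta^-$.

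The main difficulty is not structural but technical: one needs uniformity for $x\in[D^{-1}\sqrt{n},D\sqrt{n}]$ and, more delicately, for $y\in[0,\gamma_n]$ with $\gamma_n$ an \emph{arbitrary} sequence of order $o(\sqrt{n})$. Both the local CLT on the long post-minimum bridge and the renewal estimate for $R^-$ must hold with error terms that are uniform in these parameters, which is precisely the quantitative content of Doney's estimates. In practice I would therefore cite Propositions~11, 18, and~24 of~\cite{Doney2012} directly and verify only the bookkeeping step converting weak ladder heights to strict ones via~\cite[Remark~4.6]{CaravennaChaumont2013}, as already observed in~\cite[Lemma~B.1]{Pain2018}.
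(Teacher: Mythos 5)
Your final paragraph reproduces exactly what the paper does: the lemma is cited directly from Doney~\cite[Propositions~11, 18, 24]{Doney2012}, with the only nontrivial verification being the passage from the renewal function of the \emph{weak} descending ladder heights (Doney's formulation) to the \emph{strict} one used here, via~\cite[Remark~4.6]{CaravennaChaumont2013}, a reformulation already recorded in~\cite[Lemma~B.1]{Pain2018}. The paper does not give an independent proof, and neither, in the end, do you, so the approaches coincide.

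Your opening sketch of the time-reversal and Wiener--Hopf decomposition behind Doney's argument is a reasonable summary and is not relied upon, so it does not affect correctness; I will only flag one informal step in it. After time reversal and conditioning the terminal point at scale $\sqrt{n}$, you assert that the smallness of $y$ ``forces $\rho = o(n)$ in probability,'' where $\rho$ is the time of the global minimum. This is plausible (the conditioned path at macroscopic times is at height $\asymp\sqrt{n}$, whereas the minimum lies in $[-y-h,0]$ with $y=o(\sqrt{n})$), but it is not an immediate consequence of the arcsine law and requires its own quantitative estimate; this is precisely the kind of uniform control that Doney supplies and that you would not want to reprove. Citing him outright, as you and the paper both ultimately do, is the right call.
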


The following lemma is a direct consequence of Pain~\cite[Proposition~2.8]{Pain2018}.

\begin{lemma}[Pain~\cite{Pain2018}]\label{lem:pain}
    Let $\lambda \in (0,1)$ and $(\gamma_n)_{n \ge 0}$ be any real sequence such that $0 < \gamma_n = o(\sqrt{n})$.
    \begin{enumerate}
        \item If $(S_n)$ is non-lattice, then, for any $h > 0$,
        \begin{equation*}
            \P(\forall k \le \lfloor \lambda n \rfloor, S_k \le x, \forall \lfloor \lambda n \rfloor \le j \le n, S_j \le x', S_n \in (x'-y-h, x'-y]) \sim \sqrt{\frac{\pi}{2}} \frac{\theta^+ \theta^-}{\sigma} \frac{R^+(x)}{n^{3/2}} \int_y^{y+h} R^-(t) \d{t},
        \end{equation*}
        uniformly in $x, y \in [0,\gamma_n]$ and $x' \in [-\gamma_n,\gamma_n]$.
        \item If $(S_n)$ is $(h,a)$-lattice, then
        \begin{equation*}
            \P(\forall k \le \lfloor \lambda n \rfloor, S_k \le x, \forall \lfloor \lambda n \rfloor \le j \le n, S_j \le x', S_n = x'-y) \sim \sqrt{\frac{\pi}{2}} \frac{\theta^+ \theta^-}{\sigma} \frac{R^+(x)}{n^{3/2}} h R^-(y),
        \end{equation*}
        uniformly in $x,y \in [0, \gamma_n]$ and $x' \in [-\gamma_n,\gamma_n]$ such that $x'-y \in an+h\Z$.
    \end{enumerate}
\end{lemma}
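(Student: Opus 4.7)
The plan is to decompose the probability at the intermediate time $m \coloneq \lfloor \lambda n \rfloor$ via the Markov property and apply Lemma~\ref{lem:doney} to each of the two resulting pieces. Setting
$$P_n \coloneq \P(\forall k \le m, S_k \le x, \forall m \le j \le n, S_j \le x', S_n \in (x'-y-h, x'-y]),$$
the strong Markov property at time~$m$ yields
$$P_n = \int \P(\forall k \le m, S_k \le x, S_m \in dz) \, Q(z),$$
where $Q(z) \coloneq \P(\forall j \le n-m, S_j \le x'-z, S_{n-m} \in (x'-z-y-h, x'-z-y])$.

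For the second factor, the effective barrier $x'-z$ will be of order $\sqrt{n-m}$ when~$z$ lies in the typical bridge range (of order $-\sqrt{n}$), placing $Q(z)$ directly in the regime of Lemma~\ref{lem:doney}:
$$Q(z) \sim \frac{\theta^-}{\sigma (n-m)} g \Bigl( \frac{x'-z}{\sigma \sqrt{n-m}} \Bigr) \int_y^{y+h} R^-(t) \d{t},$$
with $g(s) = s \e^{-s^2/2} \mathds{1}_{\{s > 0\}}$. For the first factor, reversing time on $[0,m]$ (which preserves the increment distribution) gives
$$\P(\forall k \le m, S_k \le x, S_m \in dz) = \P(\forall j \le m, S_j \ge z-x, S_m \in dz),$$
which after negating the walk and shifting becomes a staying-below-barrier problem with barrier $x-z$ of order $\sqrt{m}$ and endpoint at distance~$x$ from the barrier. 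Applying Lemma~\ref{lem:doney} to~$-S$ (whose descending-ladder renewal function is $R^+$) and letting the interval width tend to~$0$ yields
$$\P(\forall k \le m, S_k \le x, S_m \in dz) \sim \frac{\theta^+}{\sigma m} g \Bigl( \frac{x-z}{\sigma \sqrt{m}} \Bigr) R^+(x) \, dz,$$
uniformly in~$x$ and $x-z$ over the appropriate ranges, with the analogous counting-measure version in the lattice case. As a consistency check, integrating in~$z$ recovers Kozlov's $\theta^+ R^+(x)/\sqrt{m}$.

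Multiplying the two estimates and integrating, the $z$-integral becomes
$$\int g \Bigl( \frac{x-z}{\sigma \sqrt{m}} \Bigr) g \Bigl( \frac{x'-z}{\sigma \sqrt{n-m}} \Bigr) \d{z};$$
since $x, x' = o(\sqrt{n})$, this is dominated by $-z$ of order~$\sqrt{n}$, and the standard Gaussian computation $\int_0^\infty (au)(bu) \e^{-(a^2+b^2) u^2/2} \d{u} = \sqrt{\pi/2} \cdot ab/(a^2+b^2)^{3/2}$ with $a = 1/(\sigma\sqrt{m})$, $b = 1/(\sigma\sqrt{n-m})$ gives the value $\sqrt{\pi/2} \cdot \sigma m (n-m)/n^{3/2}$. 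Combining with the prefactors, the powers of~$m$ and $n-m$ cancel exactly, leaving
$$P_n \sim \sqrt{\frac{\pi}{2}} \, \frac{\theta^+ \theta^-}{\sigma} \, \frac{R^+(x) \int_y^{y+h} R^-(t) \d{t}}{n^{3/2}},$$
which is the claimed asymptotic.

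The hard part will be establishing the required uniformity in $x, x', y$ over the full ranges: one must verify that crossing positions~$z$ outside a window of order~$\sqrt{n}$ contribute negligibly (which follows from the sub-Gaussian tails of the bridge together with a Kozlov-type upper bound), and that replacing $x$, $x'$ by~$0$ inside the $g$-factors produces only lower-order errors in the integral. Alternatively, since Pain~\cite[Proposition~2.8]{Pain2018} already formalises exactly this computation with the requisite uniformity, the lemma will follow directly from his statement after matching variables --- which is the route suggested by the attribution and the path of least resistance.
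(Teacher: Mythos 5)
Your final paragraph correctly identifies the paper's actual argument: the lemma is stated as a direct consequence of Pain~\cite[Proposition~2.8]{Pain2018}, and the paper gives no independent proof beyond that citation, so matching variables in Pain's statement is exactly what is required. Your preliminary sketch (Markov decomposition at $\lfloor \lambda n \rfloor$, time-reversing the first half, applying Lemma~\ref{lem:doney} to each piece, and doing the Gaussian $z$-integral) is a sound heuristic for how Pain's estimate is established and the arithmetic checks out, but it is supplementary to, not the same as, what the paper does.
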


In this paper, we will consider the asymptotics given in Lemmas~\ref{lem:doney} and~\ref{lem:pain} when $x,y,x' \to \infty$.
In order to let the variables go to infinity one after another, it will be convenient to state the asymptotics in the following way.

\begin{corollary}\label{cor:simple_barrier_estimate}
    Let $D > 0$ and $(\gamma_n)_{n \ge 0}$ be a real sequence such that $0 < \gamma_n = o(\sqrt{n})$.
    Let~$h$ be any positive real number if~$S_1$ is non-lattice or the lattice span if~$S_1$ is lattice.
    Then,
    \begin{multline}\label{eq:simple_barrier_estimate}
        \P(\forall k \le n, S_k \le x, S_n \in (x-y-h, x-y]) \\
        = (1 + o_{n \to \infty}(1)) (1 + o_{x \to \infty}(1)) (1 + o_{y \to \infty}(1)) \frac{1}{\sigma^3} \sqrt{\frac{2}{\pi}} \frac{xyh}{n^{3/2}} \e^{-x^2/(2n\sigma^2)},
    \end{multline}
    uniformly in $x \in [0,D\sqrt{n}]$ and $y \in [0,\gamma_n]$.
\end{corollary}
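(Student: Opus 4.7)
The plan is to combine Lemma~\ref{lem:doney}, which handles the regime $x \asymp \sqrt{n}$, with Lemma~\ref{lem:pain}, which handles the regime $x = o(\sqrt{n})$, and then to collapse the constants into the form appearing in~\eqref{eq:simple_barrier_estimate} using the renewal asymptotic~\eqref{eq:feller} and the A\"id\'ekon--Shi identity~\eqref{eq:aidekon_shi}. Because the product $(1+o_n(1))(1+o_{x\to\infty}(1))(1+o_{y\to\infty}(1))$ lets the three parameters escape to infinity independently, it is enough to show that along any sequence $(n_k, x_k, y_k)$ with $x_k\in[0,D\sqrt{n_k}]$, $y_k\in[0,\gamma_{n_k}]$, and $n_k, x_k, y_k\to\infty$, the ratio of the two sides of~\eqref{eq:simple_barrier_estimate} tends to~$1$. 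By compactness, I may pass to a subsequence along which $x_k/\sqrt{n_k}\to\alpha\in[0,D]$.

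If $\alpha>0$, then eventually $x_k\in[(D^\ast)^{-1}\sqrt{n_k},D^\ast\sqrt{n_k}]$ for some $D^\ast>0$ depending on $\alpha$, so Lemma~\ref{lem:doney} applies. In the non-lattice case, expanding $g(t)=t\e^{-t^2/2}$ yields
\[
    \P(\forall j\le n_k,\,S_j\le x_k,\,S_{n_k}\in(x_k-y_k-h,x_k-y_k])\sim\frac{\theta^-\,x_k}{\sigma^2 n_k^{3/2}}\e^{-x_k^2/(2 n_k\sigma^2)}\int_{y_k}^{y_k+h}R^-(t)\,\d{t}.
\]
From $R^-(t)\sim c^- t$ (equation~\eqref{eq:feller}) one gets $\int_{y_k}^{y_k+h}R^-(t)\,\d{t}\sim c^- y_k h$, and from~\eqref{eq:aidekon_shi} one has $\theta^- c^-=\sigma^{-1}\sqrt{2/\pi}$, which combines into exactly the equivalent announced in~\eqref{eq:simple_barrier_estimate}.

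If $\alpha=0$, so $x_k=o(\sqrt{n_k})$, I apply Lemma~\ref{lem:pain} with $x'=x_k$ and any fixed $\lambda\in(0,1)$; the two barrier constraints then collapse into $\{\forall j\le n_k,\,S_j\le x_k\}$. An auxiliary sequence $\gamma'_n=o(\sqrt{n})$ satisfying $\gamma'_{n_k}\ge x_k\vee y_k$ can be built because $x_k\vee y_k=o(\sqrt{n_k})$, so the hypotheses of Lemma~\ref{lem:pain} hold. Using $R^+(x_k)\sim c^+ x_k$, $\int_{y_k}^{y_k+h}R^-(t)\,\d{t}\sim c^- y_k h$, and the fact that $\e^{-x_k^2/(2 n_k\sigma^2)}\to 1$, together with $\theta^+c^+=\theta^-c^-=\sigma^{-1}\sqrt{2/\pi}$ from~\eqref{eq:aidekon_shi}, I obtain
\[
    \P(\cdots)\sim\sqrt{\tfrac{\pi}{2}}\cdot\frac{(\theta^+c^+)(\theta^-c^-)}{\sigma}\cdot\frac{x_k y_k h}{n_k^{3/2}}=\frac{1}{\sigma^3}\sqrt{\frac{2}{\pi}}\cdot\frac{x_k y_k h}{n_k^{3/2}},
\]
again matching~\eqref{eq:simple_barrier_estimate} up to the factor $\e^{-x_k^2/(2 n_k\sigma^2)}=1+o(1)$. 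The lattice case is handled identically, with $\int_y^{y+h} R^-(t)\,\d{t}$ replaced by $hR^-(y)\sim c^- hy$.

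The main obstacle I anticipate is purely bookkeeping rather than probabilistic: making the subsequence reduction rigorous in the presence of the triple $o$-notation, and explicitly producing an auxiliary sequence $\gamma'_n$ in the regime $\alpha=0$ that is globally $o(\sqrt{n})$ yet dominates $x_k\vee y_k$ along the extracted subsequence. No input beyond Lemmas~\ref{lem:doney} and~\ref{lem:pain}, the renewal theorem~\eqref{eq:feller}, and the identity~\eqref{eq:aidekon_shi} should be required.
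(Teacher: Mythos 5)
Your plan is essentially the paper's: split according to whether $x/\sqrt{n}$ is bounded away from zero (Lemma~\ref{lem:doney}) or tends to zero (Lemma~\ref{lem:pain}), then simplify the renewal quantities via~\eqref{eq:feller} and~\eqref{eq:aidekon_shi}; in the $o(\sqrt n)$ regime you correctly note that $\e^{-x^2/(2n\sigma^2)}\to1$, and all your algebra checks out.

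The one place where you drift from the paper — and where I think there is a real, if repairable, gap — is the opening reduction. You claim it suffices to show that along sequences $(n_k,x_k,y_k)$ with \emph{all three} parameters tending to infinity, the ratio to the main term tends to $1$. That is strictly weaker than what~\eqref{eq:simple_barrier_estimate} asserts. The triple $o$-notation means the error factors as $(1+\vep_0(n))(1+\vep_1(x))(1+\vep_2(y))$ with each $\vep_i$ depending on its own variable (uniformly), and this factored structure is genuinely exploited later: in the proof of Proposition~\ref{prop:finite_integral} the corollary is applied with $y=(i+1)h$ held fixed while $\liminf_{n\to\infty}$ is taken, which requires knowing that the $y$-error is a fixed number $1+\vep_2(y)$ rather than an unquantified quantity that only small-ifies when $y$ also diverges. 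Your subsequence reduction says nothing about what happens when, say, $y$ stays bounded while $n,x\to\infty$. The paper sidesteps this by defining the explicit functions
\[
\vep_1(x)=\sqrt{\tfrac{\pi\sigma^2}{2}}\,\tfrac{\theta^+R^+(x)}{x}-1,\qquad \vep_2(y)=\sqrt{\tfrac{\pi\sigma^2}{2}}\,\tfrac{\theta^-}{yh}\int_y^{y+h}R^-(t)\,\d{t}-1,
\]
observing that Lemmas~\ref{lem:doney} and~\ref{lem:pain} each yield $\P(\cdots)=(1+o_n(1))(1+\vep_1(x_n))(1+\vep_2(y_n))\times\text{main term}$ along sequences, and only then invoking Lemma~\ref{lem:combine_regimes} (to merge the two $x$-regimes) and Lemma~\ref{lem:from_sequence_to_uniformity} (to uniformize the remaining $o_n(1)$ factor). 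Your computations already contain $\vep_1,\vep_2$ implicitly — you use $R^-(t)/t\to c^-$ and $R^+(x)/x\to c^+$ — so the fix is to make these the headline objects instead of hiding them behind a Bolzano--Weierstrass subsequence extraction. In the lattice case also be careful that $\vep_2$ must be replaced by a quantity depending on $[x-y]_n$ as in~\eqref{eq:simple_barrier_estimate_lattice}, not just $R^-(y)\sim c^- y$, since the event $S_n\in(x-y-h,x-y]$ pins $S_n$ to the lattice point $[x-y]_n$.
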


\begin{proof}
    Let us first treat the non-lattice case.
    Define
    \begin{equation}\label{eq:simple_barrier_estimate_non_lattice}
        \vep_1(x) = \sqrt{\frac{\pi\sigma^2}{2}} \frac{\theta^+ R^+(x)}{x} - 1 \quad \text{and} \quad \vep_2(y) = \sqrt{\frac{\pi\sigma^2}{2}} \frac{\theta^-}{yh} \int_y^{y+h} R^-(t) \d{t} - 1.
    \end{equation}
    By~\eqref{eq:feller} and~\eqref{eq:aidekon_shi}, we have $\vep_1(x) = o_{x \to \infty}(1)$ and $\vep_2(y) = o_{y \to \infty}(1)$.
    Now, let $(x_n)$ and $(y_n)$ be non-negative sequences such that $y_n = o(\sqrt{n})$ and
    \begin{enumerate}
        \item either $x_n \asymp \sqrt{n}$,
        \item or $x_n = o(\sqrt{n})$.
    \end{enumerate}
    Applying Lemma~\ref{lem:doney} if $x_n \asymp\sqrt{n}$ and Lemma~\ref{lem:pain} if $x_n = o(\sqrt{n})$, we obtain that, as $n \to \infty$,
    \begin{equation*}
        \P(\forall k \le n, S_k \le x_n, S_n \in (x_n-y_n-h, x_n-y_n]) \sim (1 + \vep_1(x_n)) (1 + \vep_2(y_n)) \frac{1}{\sigma^3} \sqrt{\frac{2}{\pi}} \frac{x_n y_n h}{n^{3/2}} \e^{-x_n^2/(2n\sigma^2)}.
    \end{equation*}
    By Lemma~\ref{lem:combine_regimes}, we deduce that it still holds for any sequences $(x_n)$ and $(y_n)$ such that $0 \le x_n = O(\sqrt{n})$ and $0 \le y_n = o(\sqrt{n})$.
    The asymptotic~\eqref{eq:simple_barrier_estimate} then follows from Lemma~\ref{lem:from_sequence_to_uniformity}.

    The $(h,a)$-lattice case is handled in the same way, except that we have to slightly modify~$\vep_2(y)$ to take the lattice into account.
    Note that, in this case,
    \begin{equation*}
        \P(\forall k \le n, S_k \le x, S_n \in (x-y-h, x-y]) = \P(\forall k \le n, S_k \le x, S_n = [x-y]_n),
    \end{equation*}
    where $[z]_n$ denotes the greatest number in $an+h\Z$ such that $[z]_n \le z$.
    Define
    \begin{equation}\label{eq:simple_barrier_estimate_lattice}
        \vep_2(n,x,y) = \sqrt{\frac{\pi\sigma^2}{2}} \frac{\theta^- R^-(x-[x-y]_n)}{x-[x-y]_n} - 1.
    \end{equation}
    Since $y \le x-[x-y]_n < y+h$, we still have $\vep_2(n,x,y) = o_{y \to \infty}(1)$, by~\eqref{eq:feller} and~\eqref{eq:aidekon_shi}.
    The rest of the proof is identical to that of the non-lattice case.
\end{proof}

\begin{corollary}\label{cor:double_barrier_estimate}
    Let $\lambda \in (0,1)$, $D > 0$, and $(\gamma_n)_{n \ge 0}$ be a real sequence such that $0 < \gamma_n = o(\sqrt{n})$.
    Let~$h$ be any positive real number if~$S_1$ is non-lattice or the lattice span if~$S_1$ is lattice.
    Then,
    \begin{multline*}
        \P(\forall k \le \lfloor \lambda n \rfloor, S_k \le x, \forall \lfloor \lambda n \rfloor \le j \le n, S_j \le x-z, S_n \in (x-z-y-h, x-z-y]) \\
        \ge (1 + o_{n \to \infty}(1)) (1 + o_{x \to \infty}(1)) (1 + o_{y \to \infty}(1)) \frac{1}{\sigma^3} \sqrt{\frac{2}{\pi}} \frac{xyh}{n^{3/2}} \e^{-x^2/(2n\sigma^2)},
    \end{multline*}
    uniformly in $x \in [0,D\sqrt{n}]$ and $y,z \in [0,\gamma_n]$.
\end{corollary}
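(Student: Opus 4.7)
The plan is to follow the same two-regime strategy used in the proof of Corollary~\ref{cor:simple_barrier_estimate}, and to exploit the fact that only a lower bound is required. By Lemmas~\ref{lem:combine_regimes} and~\ref{lem:from_sequence_to_uniformity}, it will be enough to establish the inequality along arbitrary non-negative sequences $(x_n), (y_n), (z_n)$ satisfying $x_n, y_n \to \infty$, $x_n = O(\sqrt{n})$, and $y_n, z_n = o(\sqrt{n})$, treating separately the regimes $x_n = o(\sqrt{n})$ and $x_n \asymp \sqrt{n}$; the uniform bound then follows exactly as at the end of the proof of Corollary~\ref{cor:simple_barrier_estimate}.

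When $x_n = o(\sqrt{n})$, I would apply Lemma~\ref{lem:pain} directly, observing that the double barrier event is precisely of the form it handles, with first-half barrier $x_n$, second-half barrier $x_n - z_n$, and endpoint in $(x_n - z_n - y_n - h, x_n - z_n - y_n]$. Setting $\gamma'_n \coloneq \max(x_n, y_n, z_n, \gamma_n) = o(\sqrt{n})$ puts $x_n, y_n \in [0, \gamma'_n]$ and $x_n - z_n \in [-\gamma'_n, \gamma'_n]$, so Lemma~\ref{lem:pain} yields, in the non-lattice case,
\begin{equation*}
    \P(\text{double barrier event}) \sim \sqrt{\tfrac{\pi}{2}}\tfrac{\theta^+ \theta^-}{\sigma}\tfrac{R^+(x_n)}{n^{3/2}}\int_{y_n}^{y_n+h}R^-(t)\d{t}.
\end{equation*}
Using~\eqref{eq:feller} in the forms $R^+(x_n) \sim c^+ x_n$ and $\int_{y_n}^{y_n+h} R^-(t) \d{t} \sim c^- y_n h$, together with the identity~\eqref{eq:aidekon_shi}, I would rewrite this as $(1+o(1)) \frac{1}{\sigma^3}\sqrt{\frac{2}{\pi}}\, x_n y_n h / n^{3/2}$, which matches the target since $\e^{-x_n^2/(2n\sigma^2)} = 1 + o(1)$ in this regime. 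The lattice case is identical, with the integral replaced by $h R^-(y_n)$. Notice that this yields an equivalence, not just a lower bound.

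For the regime $x_n \asymp \sqrt{n}$, the plan is to bound the event below by its sub-event where the tighter barrier $x_n - z_n$ is enforced throughout:
\begin{equation*}
    \{\forall k \le n,\, S_k \le x_n - z_n,\, S_n \in (x_n - z_n - y_n - h,\, x_n - z_n - y_n]\} \subseteq \{\text{double barrier event}\},
\end{equation*}
which holds because $z_n \ge 0$. Since $x_n - z_n \in [0, D\sqrt{n}]$ and $x_n - z_n \to \infty$ for $n$ large, I would apply Corollary~\ref{cor:simple_barrier_estimate} with $x$ there replaced by $x_n - z_n$ and $y$ by $y_n$, obtaining a lower bound equivalent to $\frac{1}{\sigma^3}\sqrt{\frac{2}{\pi}}(x_n - z_n) y_n h\, n^{-3/2}\, \e^{-(x_n - z_n)^2/(2n\sigma^2)}$. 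The ratio of this to the target is
\begin{equation*}
    \left(1 - \tfrac{z_n}{x_n}\right) \exp\left(\tfrac{2 x_n z_n - z_n^2}{2n\sigma^2}\right),
\end{equation*}
which tends to $1$ because $z_n/x_n$, $x_n z_n / n$, and $z_n^2/n$ are all $o(1)$ under the standing assumptions, completing the argument in this regime.

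The main difficulty I anticipate is purely technical: verifying that the hypotheses of Lemma~\ref{lem:pain} and Corollary~\ref{cor:simple_barrier_estimate} are met in each sub-regime, handling the lattice case through the lattice variants of these results, and ensuring that the error terms $(1 + o_{x \to \infty}(1))$ and $(1 + o_{y \to \infty}(1))$ propagate correctly through the combining step. Conceptually the proof is clean: the second barrier $x-z$ is either absorbed by the direct application of Lemma~\ref{lem:pain} (when $x_n = o(\sqrt{n})$), or can be tightened into a single uniform barrier at negligible cost (when $x_n \asymp \sqrt{n}$, the small perturbation $z_n$ affects neither the prefactor nor the Gaussian exponent to leading order).
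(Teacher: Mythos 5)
Your proof follows essentially the same two-regime strategy as the paper: direct application of Lemma~\ref{lem:pain} when $x_n = o(\sqrt{n})$, and tightening the double barrier to a single barrier at $x_n - z_n$ followed by Lemma~\ref{lem:doney} (via Corollary~\ref{cor:simple_barrier_estimate}) when $x_n \asymp \sqrt{n}$, combining the regimes with Lemmas~\ref{lem:combine_regimes} and~\ref{lem:from_sequence_to_uniformity}. Your explicit computation of the ratio $(1 - z_n/x_n)\exp((2x_n z_n - z_n^2)/(2n\sigma^2)) \to 1$ makes clear the step the paper summarizes as ``we can replace the occurrences of $x_n - z_n$ by $x_n$''; the argument is correct.
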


\begin{proof}
    Define~$\vep_1(x)$, $\vep_2(y)$, and~$\vep_2(n,x,y)$ as in~\eqref{eq:simple_barrier_estimate_non_lattice} and~\eqref{eq:simple_barrier_estimate_lattice}.
    To unify the non-lattice and lattice cases, let $\tilde{\vep}_2(n,x,y) = \vep_2(y)$ in the former and $\tilde{\vep}_2(n,x,y) = \vep_2(n,x,y)$ in the latter.
    Similarly to the proof of Corollary~\ref{cor:simple_barrier_estimate}, let $(x_n)$, $(y_n)$, and $(z_n)$ be non-negative sequences such that $y_n,z_n = o(\sqrt{n})$.
    If $x_n = o(\sqrt{n})$, then Lemma~\ref{lem:pain} implies that
    \begin{multline}\label{eq:double_barrier_estimate_1}
        \P(\forall k \le \lfloor \lambda n \rfloor, S_k \le x_n, \forall \lfloor \lambda n \rfloor \le j \le n, S_j \le x_n-z_n, S_n \in (x_n-z_n-y_n-h, x_n-z_n-y_n]) \\
        = (1 + o_{n \to \infty}(1)) (1 + \vep_1(x_n)) (1 + \tilde{\vep}_2(n,x_n,y_n)) \frac{1}{\sigma^3} \sqrt{\frac{2}{\pi}} \frac{x_n y_n h}{n^{3/2}} \e^{-x_n^2/(2n\sigma^2)}.
    \end{multline}
    If $x_n \asymp \sqrt{n}$, then the left-hand side of~\eqref{eq:double_barrier_estimate_1} is higher than
    \begin{multline}\label{eq:double_barrier_estimate_2}
        \P(\forall k \le n, S_k \le x_n-z_n, S_n \in (x_n-z_n-y_n-h, x_n-z_n-y_n]) \\
        = (1 + o_{n \to \infty}(1)) (1 + \vep_1(x_n-z_n)) (1 + \tilde{\vep}_2(n,x_n-z_n,y_n)) \frac{1}{\sigma^3} \sqrt{\frac{2}{\pi}} \frac{(x_n-z_n) y_n h}{n^{3/2}} \e^{-(x_n-z_n)^2/(2n\sigma^2)},
    \end{multline}
    where the equality follows from Lemma~\ref{lem:doney}.
    Note that, in this regime, we have $x_n-z_n \sim x_n \to \infty$ and then we can replace the occurrences of $x_n-z_n$ by~$x_n$ in the right-hand side of~\eqref{eq:double_barrier_estimate_2}.
    The result then follows from Lemmas~\ref{lem:combine_regimes} and~\ref{lem:from_sequence_to_uniformity}.
\end{proof}

We will also need to estimate the probability for a random walk to make an excursion below a curved barrier.
To this end, fix $A > 0$ and $\alpha < 1/2$.
Consider some functions $k \mapsto f_n(k)$, for $n \ge 1$, such that

\begin{equation}\label{eq:condition_f}
    f_n(0) = O_n(1) \quad \text{and} \quad \sup_{1 \le j \le n} \max\left( \frac{|f_n(j)|}{j^{\alpha}}, \frac{|f_n(n)-f_n(j)|}{(n-j)^\alpha} \right) \le A.
\end{equation}
The following estimate is from Mallein~\cite[Lemma~3.1]{Mallein2016}, except that we do not assume $f_n(0) = 0$.

\begin{lemma}[\cite{Mallein2016}]\label{lem:mallein_random_walk_estimate}
    Assume~\eqref{eq:condition_f}.
    Then there exists $C > 0$ such that, for any $x, y, z > 0$,
    \begin{equation*}
        \P(\forall k \le n, S_k \le x+f_n(k), S_n \in x+f_n(n)-[y-z,y]) \le C \frac{(1 + x \wedge \sqrt{n})(1 + y \wedge \sqrt{n})(1 + z \wedge \sqrt{n})}{n^{3/2}}.
    \end{equation*}
\end{lemma}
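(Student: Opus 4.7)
The plan is to reduce directly to Mallein's original Lemma~3.1 in~\cite{Mallein2016}, which requires the additional assumption $f_n(0) = 0$. Here the only genuine difference is that $f_n(0) = O_n(1)$, and this amounts to a bounded shift that can be absorbed into the starting point of the barrier.

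Set $c \coloneq f_n(0)$ and $\tilde{f}_n(k) \coloneq f_n(k) - c$, so that $\tilde{f}_n(0) = 0$. Since $f_n(0) = O_n(1)$, there exists $C_0 > 0$ with $|c| \le C_0$ uniformly in $n$. First I would check that $\tilde{f}_n$ satisfies condition~\eqref{eq:condition_f} with $A + C_0$ in place of $A$: indeed, for $1 \le j \le n$,
\[
\frac{|\tilde{f}_n(j)|}{j^\alpha} \le \frac{|f_n(j)|}{j^\alpha} + \frac{|c|}{j^\alpha} \le A + C_0,
\]
while $\tilde{f}_n(n) - \tilde{f}_n(j) = f_n(n) - f_n(j)$, so the second part of~\eqref{eq:condition_f} carries over unchanged. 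Setting $\tilde{x} \coloneq x + c$, the probability of interest rewrites as
\[
\P(\forall k \le n,\ S_k \le \tilde{x} + \tilde{f}_n(k),\ S_n \in \tilde{x} + \tilde{f}_n(n) - [y-z, y]).
\]

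If $\tilde{x} < 0$ the event has probability zero (since $S_0 = 0 \not\le \tilde{x} + \tilde{f}_n(0) = \tilde{x}$) and the bound is trivial. Otherwise, Mallein's Lemma~3.1 applied to $(\tilde{x}, \tilde{f}_n)$ yields
\[
\P(\ldots) \le C' \frac{(1 + \tilde{x} \wedge \sqrt{n})(1 + y \wedge \sqrt{n})(1 + z \wedge \sqrt{n})}{n^{3/2}}.
\]
Since $\tilde{x} \le x + C_0$, one checks via the elementary inequality $(a+b) \wedge \sqrt{n} \le (a \wedge \sqrt{n}) + b$ for $a,b \ge 0$ that $1 + \tilde{x} \wedge \sqrt{n} \le 1 + (x \wedge \sqrt{n}) + C_0 \le (1 + C_0)(1 + x \wedge \sqrt{n})$, and the factor $1 + C_0$ is absorbed into the constant $C$. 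There is no real obstacle to this strategy: all the substantive work—splitting the walk at time $\lfloor n/2 \rfloor$ and combining Kozlov-type ballot estimates with a Hölder-type control of the barrier fluctuations (leveraging $\alpha < 1/2$)—is already contained in~\cite{Mallein2016}.
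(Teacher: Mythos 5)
Your proof is correct and follows essentially the same route as the paper: both reduce to Mallein's Lemma~3.1 by subtracting $f_n(0)$ from the barrier and then absorbing the resulting $O_n(1)$ shift of the starting point into the constant via $1 + (x+f_n(0))_+\wedge\sqrt{n} \le (1+|f_n(0)|)(1+x\wedge\sqrt{n})$. You spell out two details the paper leaves implicit (that the shifted barrier still satisfies the Hölder condition with constant $A+C_0$, and that the probability vanishes when $x+f_n(0)<0$), but the argument is the same.
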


\begin{proof}
    Applying~\cite[Lemma~3.1]{Mallein2016} with the barrier $k \mapsto f_n(k) - f_n(0)$, we obtain
    \begin{equation*}
        \P(\forall k \le n, S_k \le x+f_n(k), S_n \in x+f_n(n)-[y-z,y]) \le C \frac{(1 + (x + f_n(0))_+ \wedge \sqrt{n})(1 + y \wedge \sqrt{n})(1 + z \wedge \sqrt{n})}{n^{3/2}}.
    \end{equation*}
    We then bound $(1 + (x + f_n(0))_+ \wedge \sqrt{n}) \le (1 + x \wedge \sqrt{n})(1 + |f_n(0)|)$.
    Since $f_n(0) = O_n(1)$, we can absorb the last factor in the constant~$C$.
\end{proof}

We will also need the following variation of Lemma~\ref{lem:mallein_random_walk_estimate} in the proof of the forthcoming Lemma~\ref{lem:peeling_lemma}.

\begin{lemma}\label{lem:ballot_additional_variables}
    Assume~\eqref{eq:condition_f}.
    Then, there exists $C > 0$ such that the following holds.
    Consider additional random variables~$\xi_k$, for $k \ge 1$, such that $(S_k-S_{k-1}, \xi_k)$ are \iid random variables on~$\R^2$.
    For any $n \ge 1$ and $x, y \ge 0$,
    \begin{equation}\label{eq:ballot_additional_variables}
        \P(\forall k \le n, S_k \le x+f_n(k), \exists j < n, S_j > x+f_n(j)-\xi_{j+1}, S_n \in x+f_n(n)-[y,y+1]) \le C \eta \frac{(1+x)(1+y)}{n^{3/2}},
    \end{equation}
    where
    \begin{equation*}
        \eta = \P(\xi_1 - S_1 \ge 0)^{1/2} + \E \left[ (\xi_1 - S_1)_+^2 \right]^{1/2} + \P(\xi_1+1 \ge 0)^{1/2} + \E \left[ (\xi_1+1)_+^2 \right]^{1/2}.
    \end{equation*}
\end{lemma}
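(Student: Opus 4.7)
The plan is to prove this by union-bounding over the critical time $j$ and analyzing the transition at $j+1$. Writing $g_k:=x+f_n(k)-S_k\ge 0$ for the gap to the barrier, the extra constraint $S_j>x+f_n(j)-\xi_{j+1}$ becomes $\xi_{j+1}>g_j$, so
\[
\P(\text{LHS of~\eqref{eq:ballot_additional_variables}}) \le \sum_{j=0}^{n-1} p_j,\qquad p_j:=\P\!\left(\forall k\le n,\, S_k\le x+f_n(k),\,\xi_{j+1}>g_j,\, S_n\in x+f_n(n)-[y,y+1]\right).
\]
For each $j$, I would apply the Markov property at times $j$ and $j+1$: conditionally on $\mathcal F_j$ the pair $(\Delta S_{j+1},\xi_{j+1})$ is independent of the past and distributed as $(S_1,\xi_1)$, and conditionally on $\mathcal F_{j+1}$ the post-$(j+1)$ increments form an independent copy of the walk. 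This decomposes $p_j$ into a pre-$j$ ballot probability (with exit in a width-$1$ band around $S_j$), a transition factor involving $(\Delta S_{j+1},\xi_{j+1})$ subject to $\xi_{j+1}>g_j$ and $S_{j+1}\le x+f_n(j+1)$, and a post-$(j+1)$ ballot probability with endpoint constraint. Each ballot factor is bounded by Lemma~\ref{lem:mallein_random_walk_estimate}.

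The factor $\eta$ emerges from splitting the transition according to the subsequent gap $g_{j+1}$. In the case $g_{j+1}\le 1$, the walk position $S_{j+1}$ lies in a band of width~$1$ just below the barrier, and the joint constraint $\xi_{j+1}>g_j\ge 0$ is controlled, via Cauchy--Schwarz applied over the $j$-sum (separating the regime where $\xi_1+1$ is bounded from where it is large), by the moment terms $\P(\xi_1+1\ge 0)^{1/2}+\E[(\xi_1+1)_+^2]^{1/2}$, where the ``$+1$'' encodes the band width. In the case $g_{j+1}>1$, the identity $\Delta S_{j+1}=g_j-g_{j+1}+\delta_j$ (with $\delta_j:=f_n(j+1)-f_n(j)$) combined with $\xi_{j+1}>g_j$ yields $\xi_1-S_1\overset{d}{=}\xi_{j+1}-\Delta S_{j+1}>g_{j+1}-\delta_j>1-\delta_j$; up to the bounded perturbation $\delta_j$, this forces $(\xi_1-S_1)_+$ to exceed a fixed threshold and Cauchy--Schwarz then produces $\P(\xi_1-S_1\ge 0)^{1/2}+\E[(\xi_1-S_1)_+^2]^{1/2}$. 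Assembling the two cases produces exactly $\eta$, and summing the resulting per-$j$ bounds of order $\eta(1+x)(1+y)/n^{5/2}$ over the $n$ values of $j$ yields the claimed $C\eta(1+x)(1+y)/n^{3/2}$.

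The main technical obstacle is that the shifted barrier $k\mapsto f_n(j+1+k)-f_n(j+1)$ required for the post-$(j+1)$ application of Lemma~\ref{lem:mallein_random_walk_estimate} need not satisfy~\eqref{eq:condition_f} with the same H\"older constant near $k=0$ when $j$ is large; indeed, $|f_n(j+1+k)-f_n(j+1)|/k^\alpha$ can blow up for small $k$. I would handle this by splitting the $j$-sum at $\lfloor n/2\rfloor$ and performing the argument symmetrically for $j>n/2$ (decomposing at time $j$ instead of $j+1$, so that the long half becomes the pre-$j$ piece, possibly via the time-reversed walk). The hypothesis $\alpha<1/2$ in~\eqref{eq:condition_f} is essential to absorb the boundary effects into a constant depending only on $A$ and $\alpha$. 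The edge cases $j\in\{0,n-1\}$ will require a separate direct estimate on $(S_1,\xi_1)$, which contributes to $\eta$ through the $\P$-terms rather than the $\E$-terms; a secondary subtlety is justifying the Cauchy--Schwarz step cleanly so that the four contributions to $\eta$ appear with the correct square-root exponents.
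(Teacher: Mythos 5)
Your route—union-bound over the first violating index $j$, then a three-piece Markov decomposition at times $j$ and $j+1$—is genuinely different from the paper's. The paper introduces the stopping time $\tau_n(x)=\inf\{k<n: S_k>x+f_n(k)-\xi_{k+1}\}$, splits according to $\tau_n(x)<n/2$ or $\tau_n(x)\ge n/2$, and applies the Markov property at the \emph{fixed} time $n/2$ rather than at $j$. The half containing the hitting time is then fed into the auxiliary Lemma~\ref{lem:ballot_additional_variables_2}, whose internal union bound uses a \emph{crude} post-hitting estimate (no endpoint window), while the half containing the endpoint constraint is handled by Lemma~\ref{lem:mallein_random_walk_estimate} with the starting gap trivially capped at $\sqrt{n}$; the case $\tau_n(x)\ge n/2$ is reduced to the first by time reversal, which is precisely where the variable $\xi_1+1$ (width-$1$ window) arises. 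This split-at-$n/2$ structure is not an optional technical device; it is what keeps the moment requirement at the $\E[\,\cdot_+^2]$ level.

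The serious gap in your plan is the moment count. After integrating over the gap $g_j\in[z,z+1]$ with $z$ ranging up to $\xi_{j+1}$, your per-$j$ estimate carries \emph{three} gap-type factors: $(1+z\wedge\sqrt{j})$ from the pre-$j$ ballot, $(1+g_{j+1}\wedge\sqrt{n-j})$ from the post-$(j+1)$ ballot, and the extra power of $\xi$ from summing over $z\le\xi$. Because the two ballot truncations live at the different scales $\sqrt{j}$ and $\sqrt{n-j}$, the identity $\sum_k(a\wedge\sqrt{k})(b\wedge\sqrt{k})/k^{3/2}\lesssim 1+a+b$ (which tames the product in Lemma~\ref{lem:ballot_additional_variables_2}) does not apply: one instead gets $\sum_j(1+z\wedge\sqrt{j})/j^{3/2}\lesssim 1+\log_+ z$ on the short side, and after summing over $z$ the resulting moment is of order $\E[\xi_+^2\log_+\xi_+]$ (or worse), which is strictly stronger than what $\eta$ encodes and which fails in the intended application (Lemma~\ref{lem:peeling_lemma}, where only $\E[\xi(w_0)^2]<\infty$ is available). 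Two further inaccuracies: the claim that each $p_j$ is $O(\eta(1+x)(1+y)/n^{5/2})$ is false uniformly in $j$ (the correct per-$j$ order is $j^{-3/2}(n-j)^{-3/2}$, whose sum is $\asymp n^{-3/2}$, so the conclusion survives but the stated reasoning does not); and $\delta_j=f_n(j+1)-f_n(j)$ is \emph{not} a bounded perturbation under~\eqref{eq:condition_f} alone—it can be as large as $O(n^{\alpha})$ for $j$ near $n/2$—so the step ``$\xi_1-S_1>1-\delta_j$ forces a fixed threshold'' does not hold for the general barriers the lemma must cover.
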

We postpone the proof of Lemma~\ref{lem:ballot_additional_variables} to the Appendix~\ref{sct:ballot_additional_variables}.
It is similar to~\cite[Lemma~B.2]{Aidekon2013}.

\section{Asymptotic properties of the integral defining~\texorpdfstring{$C^*$}{C*}}\label{sct:integral}

The aim of this section is to study the asymptotic properties, as $\ell \to \infty$, of the integral and the sum defining constants~$C_{\mathrm{NL}}$ and~$C_{\mathrm{L}}$ in Theorem~\ref{th:moderate_deviation}, or equivalently, of the integral defining~$C^*$ in Remark~\ref{rem:unification}, that is
\begin{equation}\label{eq:integral}
    \int_0^\infty y \e^y \P(M_\ell > y) \d{y}.
\end{equation}
To show that~$C_{\mathrm{NL}}$ and~$C_{\mathrm{L}}$ are positive constants, we must prove that~\eqref{eq:integral} converges in $(0,\infty)$ as $\ell \to \infty$.
As a first step, we obtain that the $\limsup$ and $\liminf$ are indeed in $(0,\infty)$.
The proof of the convergence is postponed to Section~\ref{sct:proof_of_theorem} since it will follow from an argument developed in Section~\ref{sct:asymptotics_for_expectation}.

It will also be useful to identify the main support of the integral in~\eqref{eq:integral}.
One can expect it to correspond to values of~$y$ that are of order~$\sqrt{\ell}$, \eg in view of equation~\eqref{eq:moderate_deviation_bbm} for the branching Brownian motion.
However, we do not need to be that precise for the proof of our main result Theorem~\ref{th:moderate_deviation}.
It will be sufficient to show that the integration domain can be restricted to $[\vep\sqrt{\ell},B\ell]$, with $\vep \to 0$ and $B \to \infty$ after $\ell \to \infty$.
This is achieved in the following two propositions.
The fact that we can restrict the domain to $[\vep\sqrt{\ell},B\sqrt{\ell}]$ is stated in Section~\ref{sct:integral_2} as a consequence of Theorem~\ref{th:moderate_deviation}.

\begin{proposition}\label{prop:restriction_lower}
Assume~\eqref{eq:assumption_supercriticality}, \eqref{eq:assumption_boundary_case}, and~\eqref{eq:assumption_gaussianity}.
Then,
\[\limsup_{\vep \to 0^+}\limsup_{\ell\to\infty}\int_0^{\vep\sqrt{\ell}}y \e^y\P(M_\ell>y)\d{y}=0.\]
\end{proposition}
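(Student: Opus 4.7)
The plan is to prove the result as a direct corollary of Hu's upper bound~\eqref{eq:hu_upper_bound}, reducing everything to a one-line computation with a polynomial integrand.

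First I would observe that, since $m_\ell = -(3/2)\log\ell \le 0$ for $\ell \ge 1$, the shift $y - m_\ell = y + (3/2)\log\ell$ is non-negative for every $y \ge 0$. Hence~\eqref{eq:hu_upper_bound} applies directly and gives
\[
\P(M_\ell > y) \;=\; \P\bigl(M_\ell > m_\ell + (y - m_\ell)\bigr) \;\le\; C\bigl(1 + y + \tfrac{3}{2}\log\ell\bigr)\,\ell^{-3/2}\,\e^{-y}.
\]
Multiplying by $y\e^y$ and integrating over $[0,\vep\sqrt{\ell}]$ then yields
\[
\int_0^{\vep\sqrt{\ell}} y\e^y\,\P(M_\ell > y)\d{y} \;\le\; C\,\ell^{-3/2}\int_0^{\vep\sqrt{\ell}} y\bigl(1 + y + \tfrac{3}{2}\log\ell\bigr)\d{y}.
\]

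A direct computation of the right-hand side gives a bound of the form
\[
C\,\ell^{-3/2}\Bigl[\tfrac{1}{2}\vep^2\ell + \tfrac{1}{3}\vep^3\ell^{3/2} + \tfrac{3}{4}\vep^2\ell\log\ell\Bigr] \;=\; \tfrac{C}{2}\vep^2\ell^{-1/2} + \tfrac{C}{3}\vep^3 + \tfrac{3C}{4}\vep^2\ell^{-1/2}\log\ell.
\]
As $\ell \to \infty$, the first and third terms vanish, so
\[
\limsup_{\ell\to\infty}\int_0^{\vep\sqrt{\ell}} y\e^y\,\P(M_\ell > y)\d{y} \;\le\; \tfrac{C}{3}\vep^3,
\]
and letting $\vep \to 0^+$ concludes the proof. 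There is essentially no obstacle here: the only non-trivial ingredient is~\eqref{eq:hu_upper_bound}, which is already available in the excerpt; the rest is elementary integration.
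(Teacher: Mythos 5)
Your proof is correct and follows essentially the same approach as the paper's: apply Hu's upper bound~\eqref{eq:hu_upper_bound} at $z = y - m_\ell$, multiply by $y\e^y$, and evaluate the resulting polynomial integral. The paper absorbs the additive constant $1$ into $y - m_\ell$ (valid since $y - m_\ell \ge \tfrac{3}{2}\log\ell$ is bounded below for $\ell \ge 2$), while you keep it explicitly, but the computation and conclusion are the same.
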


\begin{proof}
According to~\eqref{eq:hu_upper_bound}, we know that there exists $C>0$ such that, for any $\ell \ge 1$ and $z \ge 0$, we have
\[\P(M_\ell>m_\ell+z) \le C(1+z)\e^{-z}.\]
This implies that
\[\int_0^{\vep\sqrt{\ell}}y \e^y\P(M_\ell>y)\d{y} \le C\int_0^{\vep\sqrt{\ell}}y \e^y (y-m_\ell)\e^{-y+m_\ell}\d{y} \le \frac{C}{\ell^{3/2}}\int_0^{\vep\sqrt{\ell}}y^2\d{y}+\frac{C\log \ell}{\ell^{3/2}}\int_0^{\vep\sqrt{\ell}}y\d{y}.\]
Making $\ell \to \infty$ and then $\vep \to 0^+$, this bound goes to $0$.
\end{proof}

\begin{proposition}\label{prop:restriction_upper}
Assume~\eqref{eq:assumption_supercriticality}, \eqref{eq:assumption_boundary_case}, and~\eqref{eq:assumption_gaussianity}.
Then,
\[\lim_{B\to\infty}\sup_{\ell \ge 1}\int_{B\ell}^\infty y\e^y\P(M_\ell>y)\d{y}=0.\]
\end{proposition}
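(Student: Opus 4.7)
The plan is to use the union bound together with the many-to-one formula (Lemma~\ref{lem:many_to_one}) to reduce $\P(M_\ell > y)$ to an expectation involving only the random walk $(S_k)_{k \ge 0}$ associated with the branching random walk, and then apply Fubini's theorem to absorb the $\e^y$ weight in the integrand.

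First, I would write
\begin{equation*}
    \P(M_\ell > y) \le \E\left[\sum_{|u|=\ell} \mathds{1}_{\{V(u) > y\}}\right] = \E\left[\e^{-S_\ell}\mathds{1}_{\{S_\ell > y\}}\right].
\end{equation*}
Multiplying by $y\e^y$ and swapping the integral and the expectation, I obtain
\begin{equation*}
    \int_{B\ell}^\infty y\e^y\P(M_\ell > y)\d{y} \le \E\left[\mathds{1}_{\{S_\ell > B\ell\}}\int_{B\ell}^{S_\ell} y\e^{y-S_\ell}\d{y}\right] \le \E\left[S_\ell\mathds{1}_{\{S_\ell > B\ell\}}\right],
\end{equation*}
where the last inequality uses $y \le S_\ell$ on the integration range and $\int_{B\ell}^{S_\ell} \e^{y-S_\ell}\d{y} \le 1$. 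The role of this Fubini step is that the factor $\e^y$ is cancelled by $\e^{-S_\ell}$.

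It then remains to estimate $\E[S_\ell\mathds{1}_{\{S_\ell > B\ell\}}]$. Since $(S_k)$ is centered with variance $\sigma^2$ by~\eqref{eq:assumption_boundary_case} and~\eqref{eq:assumption_gaussianity}, one has $\E[S_\ell^2] = \ell\sigma^2$, and the elementary bound $S_\ell \le S_\ell^2/(B\ell)$ on $\{S_\ell > B\ell\}$ yields $\E[S_\ell\mathds{1}_{\{S_\ell > B\ell\}}] \le \E[S_\ell^2]/(B\ell) = \sigma^2/B$. This bound is independent of $\ell$ and vanishes as $B \to \infty$, which proves the claim. No serious obstacle arises; the only point to notice is to use the Fubini manipulation in order to absorb the exponential weight before reducing the problem to the second moment of the walk.
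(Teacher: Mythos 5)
Your proof is correct and arrives at the same bound $\E[S_\ell\mathds{1}_{\{S_\ell>B\ell\}}]\le\sigma^2/B$ as the paper, by essentially the same route; the only cosmetic difference is that you apply the union bound and many-to-one formula before Fubini, whereas the paper applies Fubini first to get $\E[M_\ell\e^{M_\ell}\mathds{1}_{\{M_\ell>B\ell\}}]$ and then dominates this by the sum over particles before invoking many-to-one. Both versions are correct and of the same length and difficulty.
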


\begin{proof}
By Fubini's theorem, we have
\[\int_{B\ell}^\infty y\e^y\P(M_\ell>y)\d{y} \le \E\left[M_\ell \e^{M_\ell}\mathds{1}_{\{M_\ell>B\ell\}}\right].\]
And we know that
\begin{equation*}
\E\left[M_\ell \e^{M_\ell}\mathds{1}_{\{M_\ell>B\ell\}}\right] \le \E\left[\sum_{|u|=\ell}V(u)\e^{V(u)}\mathds{1}_{\{V(u)>B\ell\}}\right] = \E\left[S_\ell \mathds{1}_{\{S_\ell>B\ell\}}\right],
\end{equation*}
where the equality follows from the many-to-one formula.
By Markov's inequality, we have 
\[\E\left[S_\ell \mathds{1}_{\{S_\ell>B\ell\}}\right] \le \frac{\E\left[S_\ell^2\right]}{B\ell}=\frac{\sigma^2}{B}.\]
Therefore, we conclude that, for any $B>0$,
\[\sup_{\ell \ge 1}\int_{B\ell}^\infty y\e^y\P(M_\ell>y)\d{y} \le \frac{\sigma^2}{B}.\]
Letting $B\to\infty$, this completes the proof.
\end{proof}

Up to the existence of constants~$C_{\mathrm{NL}}$ and~$C_{\mathrm{L}}$ defined in Theorem~\ref{th:moderate_deviation} (proved in Section~\ref{sct:proof_of_theorem} below), the following proposition implies that they are finite.

\begin{proposition}\label{prop:finite_integral} Assume~\eqref{eq:assumption_supercriticality}, \eqref{eq:assumption_boundary_case}, and~\eqref{eq:assumption_gaussianity}.
Then,
\begin{equation}\label{eq:finite_integral}
    \limsup_{\ell\to\infty}\int_0^\infty y \e^y\P(M_\ell > y)\d{y}<\infty.
\end{equation}
\end{proposition}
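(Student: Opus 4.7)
The plan is to split the integral at two cutoffs and handle each piece separately. I fix small $\vep>0$ and large $B>0$ (both to be chosen below) and write
\[
\int_0^\infty y\e^y\P(M_\ell>y)\d{y}=\int_0^{\vep\sqrt\ell}+\int_{\vep\sqrt\ell}^{B\ell}+\int_{B\ell}^\infty.
\]
By Proposition~\ref{prop:restriction_lower}, for $\vep$ sufficiently small and $\ell$ sufficiently large the first piece is at most~$1$; by Proposition~\ref{prop:restriction_upper}, for $B$ sufficiently large the third piece is at most $\sigma^2/B$, uniformly in $\ell$. It remains to show that the middle piece is uniformly bounded in $\ell$.

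For the middle piece I would start from Hu's inequality~\eqref{eq:hu_upper_bound}, which yields $\P(M_\ell>y)\le C(1+y+\log\ell)\ell^{-3/2}\e^{-y}$ and hence $y\e^y\P(M_\ell>y)\le Cy(1+y+\log\ell)\ell^{-3/2}$. Over any sub-range $y\in[\vep\sqrt\ell,A\sqrt\ell]$ with $A$ fixed, this integrates to a quantity depending only on $A$ and $\vep$, hence bounded in $\ell$. The difficulty lies in the outer sub-range $[A\sqrt\ell,B\ell]$, where Hu's bound is too loose: it is missing the Gaussian factor $\e^{-y^2/(2\ell\sigma^2)}$ that, in view of Theorem~\ref{th:moderate_deviation}, governs the true decay.

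To sharpen $\P(M_\ell>y)$ over $[A\sqrt\ell,B\ell]$ I would apply the many-to-one formula (Lemma~\ref{lem:many_to_one}) to the decomposition
\[
\P(M_\ell>y)\le\P\bigl(\exists u:V(u)>y,\ V(u_k)\le b_\ell(k)\text{ for all }k<\ell\bigr)+\sum_{k<\ell}\P\bigl(M_k>b_\ell(k)\bigr),
\]
with $b_\ell$ a curved barrier satisfying~\eqref{eq:condition_f} and rising just above $y$ at time $\ell$. The error sum would be controlled by summing Hu's bound; the main term, after partitioning $\{S_\ell>y\}$ into unit level sets and invoking Mallein's ballot estimate (Lemma~\ref{lem:mallein_random_walk_estimate}) and the random walk estimates of Corollary~\ref{cor:simple_barrier_estimate}, should yield a bound featuring the Gaussian factor; the change of variables $u=y/(\sigma\sqrt\ell)$ then controls the resulting integral by $C\int_0^\infty u^2\e^{-u^2/2}\d{u}<\infty$. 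The main obstacle will be choosing $b_\ell$ and tracking constants so that this Gaussian decay is obtained uniformly across the intermediate regime $y/\sqrt\ell\to\infty$, $y/\ell\to 0$: Corollary~\ref{cor:simple_barrier_estimate} is sharp only for $y\asymp\sqrt\ell$, so extending to larger $y$ requires Lemma~\ref{lem:mallein_random_walk_estimate} with a careful partition, possibly supplemented by the $L\log^2L$ moment bound from~\eqref{eq:assumption_peeling_lemma} applied to the additive martingale~\eqref{eq:additive_martingale}.
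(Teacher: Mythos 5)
Your splitting into $[0,\vep\sqrt\ell]$, $[\vep\sqrt\ell,B\ell]$, and $[B\ell,\infty)$ and handling of the outer pieces via Propositions~\ref{prop:restriction_lower} and~\ref{prop:restriction_upper} is fine, and your observation that Hu's bound~\eqref{eq:hu_upper_bound} suffices on $[\vep\sqrt\ell,A\sqrt\ell]$ for any fixed $A$ is also correct. The problem is the middle range $[A\sqrt\ell,B\ell]$ with $A,B$ fixed, and your proposed sharpening there does not go through. You cannot extract a Gaussian factor $\e^{-y^2/(2\ell\sigma^2)}$ from Lemma~\ref{lem:mallein_random_walk_estimate}: that estimate is a pure ballot bound of the form $C(1+x)(1+y)(1+z)/n^{3/2}$ with no exponential-in-$x^2/n$ damping, so applying it just reproduces Hu's bound. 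Corollary~\ref{cor:simple_barrier_estimate} does feature the Gaussian factor, but it is only valid uniformly over $x\in[0,D\sqrt n]$ for a \emph{fixed}~$D$, whereas your range $[A\sqrt\ell,B\ell]$ has $y/\sqrt\ell\to\infty$; it simply does not apply there. Worse, the paper's own discussion in the strategy section points out that under~\eqref{eq:assumption_gaussianity} alone (finite variance, no higher moments) the Gaussian upper bound is genuinely \emph{false} in the regime $y/\sqrt\ell\to\infty$: one-big-jump trajectories can make $\P(\forall k\le\ell,S_k\le y,\,S_\ell\approx y)$ decay only polynomially rather than Gaussian-exponentially. So the plan of ``obtaining the Gaussian decay uniformly across the intermediate regime'' cannot succeed with the stated assumptions, and the $L\log^2L$ condition you mention does not rescue it either (that condition controls offspring multiplicity, not the step distribution's tail).

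The paper's actual proof sidesteps this obstruction entirely. Rather than trying to bound $\P(M_\ell>y)$ pointwise well enough to integrate, it embeds the integral $\int_0^\infty y\e^y\P(M_\ell>y)\,\d y$ inside the quantity $\E[\Gamma_{n,\ell}(x_n)]$ (particles at time $n-\ell$ staying below a barrier and having a descendant exceeding $m_n+x_n$ at time~$n$). Mallein's moment estimate gives the \emph{a priori} upper bound $\E[\Gamma_{n,\ell}(x_n)]\le C(1+x_n)\e^{-x_n}$. On the other side, applying Markov at time $n-\ell$, many-to-one, dropping the curve to a flat barrier $m_n+x_n$, decomposing on the landing level of $S_{n-\ell}$, and invoking Corollary~\ref{cor:simple_barrier_estimate} in the regime where it \emph{does} apply (landing heights of order $o(\sqrt\ell)$, with the barrier height $x_n=o(\sqrt n)$) yields a lower bound of the form $c\,x_n\e^{-x_n}\int_{A_\ell}^\infty y\e^y\P(M_\ell>y)\,\d y$. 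Comparing the two and using Lemma~\ref{lem:negligible_integral} and Proposition~\ref{prop:restriction_lower} to close the gap between the restricted and full integral gives the desired uniform bound. This sandwich argument is the key idea your proposal is missing, and I do not see a way to repair your direct route without essentially re-deriving the moderate-deviation upper bound that is the target of the whole paper.
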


To prove Proposition~\ref{prop:finite_integral}, we will need the following lemma, which will also be useful throughout the rest of the paper.

\begin{lemma}\label{lem:negligible_integral}
Assume~\eqref{eq:assumption_supercriticality}, \eqref{eq:assumption_boundary_case}, and~\eqref{eq:assumption_gaussianity}.
Then, as $\ell \to \infty$,
\[\int_0^\infty \e^y\P(M_\ell>y)\d{y}=O(\ell^{-1/4}).\]
\end{lemma}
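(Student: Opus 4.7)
The plan is to split the integral at a well-chosen threshold $T = T_\ell \to \infty$, estimating the two pieces by very different means, and then optimizing $T_\ell$. Concretely, write
\[
\int_0^\infty \e^y \P(M_\ell > y) \d{y} = \int_0^{T_\ell} \e^y \P(M_\ell > y) \d{y} + \int_{T_\ell}^\infty \e^y \P(M_\ell > y) \d{y}.
\]

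For the first integral, I would apply Hu's bound~\eqref{eq:hu_upper_bound} after the substitution $z = y - m_\ell$, exactly as in the proof of Proposition~\ref{prop:restriction_lower}. Since $\e^{m_\ell} = \ell^{-3/2}$, this gives
\[
\int_0^{T_\ell} \e^y \P(M_\ell > y) \d{y} \le C\ell^{-3/2}\int_0^{T_\ell}(1 + y - m_\ell)\d{y} = O\bigl(\ell^{-3/2}(T_\ell^2 + T_\ell \log \ell)\bigr).
\]

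For the second integral, the key idea is to use Fubini to convert the tail of $M_\ell$ into a first-moment estimate, then apply the many-to-one formula. Precisely,
\[
\int_{T_\ell}^\infty \e^y \P(M_\ell > y) \d{y} = \E\bigl[(\e^{M_\ell} - \e^{T_\ell})\mathds{1}_{\{M_\ell > T_\ell\}}\bigr] \le \E\bigl[\e^{M_\ell}\mathds{1}_{\{M_\ell > T_\ell\}}\bigr] \le \E\left[\sum_{|u|=\ell} \e^{V(u)}\mathds{1}_{\{V(u) > T_\ell\}}\right],
\]
where the last inequality uses that all terms are non-negative when $T_\ell \ge 0$ and the maximizing particle contributes exactly $\e^{M_\ell}$. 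By Lemma~\ref{lem:many_to_one} applied with $h(v) = \mathds{1}_{\{v > T_\ell\}}$, the right-hand side equals $\P(S_\ell > T_\ell)$, which by Markov's inequality and~\eqref{eq:assumption_gaussianity} is at most $\sigma^2 \ell / T_\ell^2$.

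Combining the two bounds yields
\[
\int_0^\infty \e^y \P(M_\ell > y) \d{y} \le C\ell^{-3/2}\bigl(T_\ell^2 + T_\ell \log \ell\bigr) + \frac{\sigma^2 \ell}{T_\ell^2}.
\]
Balancing $\ell^{-3/2} T_\ell^2$ against $\ell/T_\ell^2$ gives the choice $T_\ell = \ell^{5/8}$, making both principal terms of order $\ell^{-1/4}$ while the $T_\ell \log \ell$ term is negligible. This delivers the claimed $O(\ell^{-1/4})$ bound. The argument is largely routine once the right decomposition is in place; the only real insight needed is the observation that $\int_T^\infty \e^y \P(M_\ell>y)\d{y}$ equals an expectation that, through the trivial bound by a sum over $|u| = \ell$, is amenable to the many-to-one formula and thereby to a straightforward second-moment estimate on the associated random walk.
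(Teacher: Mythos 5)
Your proof is correct and follows essentially the same route as the paper: split at $T_\ell$, bound the lower part via Hu's estimate~\eqref{eq:hu_upper_bound}, bound the tail via the crude union bound $\e^{M_\ell}\mathds{1}_{\{M_\ell>T_\ell\}} \le \sum_{|u|=\ell}\e^{V(u)}\mathds{1}_{\{V(u)>T_\ell\}}$ followed by many-to-one and Markov, and then take $T_\ell = \ell^{5/8}$. The only cosmetic difference is that you first record the exact Fubini identity $\int_{T_\ell}^\infty \e^y\P(M_\ell>y)\d{y} = \E[(\e^{M_\ell}-\e^{T_\ell})\mathds{1}_{\{M_\ell>T_\ell\}}]$ before dropping the $\e^{T_\ell}$ term, whereas the paper goes directly to the upper bound.
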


\begin{proof}
We split $[0,\infty)$ into two parts $[0,a_\ell)$ and $[a_\ell,\infty)$, for $a_\ell$ that will be fixed later on.
On the one hand, according to~\eqref{eq:hu_upper_bound}, we have
\[\int_0^{a_\ell} \e^y\P(M_\ell>y)\d{y} \le \frac{C}{\ell^{3/2}}\int_0^{a_\ell}(y-m_\ell)\d{y} \le \frac{Ca_\ell^2}{\ell^{3/2}}+\frac{Ca_\ell\log \ell}{\ell^{3/2}}.\]
On the other hand, we have
\begin{equation*}
\int_{a_\ell}^\infty \e^y\P(M_\ell>y)\d{y} \le \E \left[ \e^{M_\ell}\mathds{1}_{\{M_\ell \ge a_\ell\}} \right] \le \E \left[ \sum_{|u|=\ell}\e^{V(u)}\mathds{1}_{\{V(u) \ge a_\ell\}} \right] = \P(S_\ell \ge a_\ell) \le \frac{1}{a_\ell^2} \E \left[ S_\ell^2 \right] = \frac{\sigma^2 \ell}{a_\ell^2},
\end{equation*}
where the first equality follows from the many-to-one formula.
Taking $a_\ell \coloneq \ell^{5/8}$, this completes the proof.
\end{proof}

Before turning to the proof of Proposition~\ref{prop:finite_integral}, let us introduce some notation that will be useful there and in the rest of the paper.
For $u,v\in\T$, we write $v \succeq u$ if~$v$ is~$u$ or one of its descendants.
Besides, given $n \ge 0$, we define
\begin{equation}\label{eq:relative_max}
    M_n^{(u)}=\max_{|v|=|u|+n,v \succeq u} (V(v)-V(u)),
\end{equation}
the maximal position (with respect to $V(u)$) of a branching random walk generated from~$u$.

\begin{proof}[Proof of Proposition~\ref{prop:finite_integral}]
    Define, for $n \ge 1$ and $0 \le k \le n$, $f_n(k) = -\frac{3}{2} \log \frac{n}{n-k+1}$ and, for $x \ge 0$ and $0 \le \ell \le n$,
    \begin{equation*}
        \Gamma_{n,\ell}(x) = \sum_{|u|=n-\ell} \mathds{1}_{\{\forall j \le n-\ell, V(u_j) \le f_n(j)\}} \mathds{1}_{\{M_\ell^{(u)} > m_n+x\}}.
    \end{equation*}
    We will control the left-hand side of~\eqref{eq:finite_integral} thanks to $\E[\Gamma_{n,\ell}(x)]$.
    First, let us state an upper bound for the latter quantity: by Markov's inequality,
    \begin{equation}\label{eq:finite_integral_mallein}
        \E[\Gamma_{n,\ell}(x)] \le \sum_{k=n-\ell+1}^n \E \left[ \sum_{|u|=k} \mathds{1}_{\{\forall j < k, V(u_j) \le f_n(j)\}} \mathds{1}_{\{V(u) > f_n(k)+x\}} \right] \le C(1+x)\e^{-x},
    \end{equation}
    where the last inequality is shown by Mallein in the proof of~\cite[Lemma~4.2]{Mallein2016}, using Lemmas~\ref{lem:many_to_one} and~\ref{lem:mallein_random_walk_estimate}.
    
    Now, we apply successively the Markov property and the many-to-one formula to write
    \begin{align*}
        \E[\Gamma_{n,\ell}(x)]
        & = \E \left[ \sum_{|u|=n-\ell} \mathds{1}_{\{\forall j \le n-\ell, V(u_j) \le f_n(j)+x\}} \P_{V(u)}(M_\ell>m_n+x) \right] \\
        & = \E \left[ \e^{-S_{n-\ell}} \mathds{1}_{\{\forall j \le n-\ell, S_j \le f_n(j)+x\}} \P_{S_{n-\ell}}(M_\ell>m_n+x) \right] \\
        & \ge  \E \left[ \e^{-S_{n-\ell}} \mathds{1}_{\{\forall j \le n-\ell, S_j \le m_n+x\}} \P_{S_{n-\ell}}(M_\ell>m_n+x) \right].
    \end{align*}
    Fix~$h$ any positive number if~$S_1$ is non-lattice, the lattice span if~$S_1$ is $(h,a)$-lattice.
    Decompose the event $\{S_{n-\ell} \le m_n+x\}$ to write
    \begin{align*}
        \E[\Gamma_{n,\ell}(x)]
        & \ge  \sum_{i \ge 0} \E \left[ \e^{-S_{n-\ell}} \mathds{1}_{\left\{ \substack{\forall j < n-\ell, S_j \le m_n+x, \\ S_{n-\ell}-m_n-x \in (-(i+1)h,-ih]} \right\}} \P_{S_{n-\ell}}(M_\ell>m_n+x) \right] \\
        & \ge \sum_{i \ge 0} \E \left[ \e^{-m_n-x+ih} \mathds{1}_{\left\{\substack{\forall j < n-\ell, S_j \le m_n+x, \\ S_{n-\ell}-m_n-x \in (-(i+1)h,-ih]} \right\}} \P(M_\ell>(i+1)h) \right].
    \end{align*}
    In view of applying Corollary~\ref{cor:simple_barrier_estimate}, we choose $x = x_n$ such that $x_n \to \infty$ and $x_n = o(\sqrt{n})$ as $n \to \infty$, and we restrict the sum to $i \ge A_\ell/h$, where $A_\ell$ is such that $A_\ell \to \infty$ and $A_\ell = o(\sqrt{\ell})$ as $\ell \to \infty$.
    This, together with Fatou's lemma, yields
    \begin{equation}\label{eq:finite_integral_fatou}
        \liminf_{n \to \infty} \frac{\E[\Gamma_{n,\ell}(x_n)]}{x_n \e^{-x_n}} \ge \sum_{i \ge A_\ell/h} \e^{ih} \P(M_\ell > (i+1)h) \liminf_{n \to \infty} \frac{n^{3/2}}{x_n} \P \left( \substack{\forall j < n-\ell, S_j \le m_n+x_n, \\ S_{n-\ell}-m_n-x_n \in (-(i+1)h,-ih]} \right).
    \end{equation}
    Applying Corollary~\ref{cor:simple_barrier_estimate}, it becomes
    \begin{align}
        \liminf_{n \to \infty} \frac{\E[\Gamma_{n,\ell}(x_n)]}{x_n \e^{-x_n}}
        & \ge \frac{(1 + o_\ell(1))}{\sigma^3} \sqrt{\frac{2}{\pi}} \sum_{i \ge A_\ell/h} \e^{ih} \P(M_\ell > (i+1)h) ih^2 \nonumber \\
        & \ge \frac{(1+o_\ell(1))}{\sigma^3} \sqrt{\frac{2}{\pi}} \int_{A_\ell+h}^\infty \e^{y-2h} \P(M_\ell > y) (y-2h) \d{y}, \label{eq:finite_integral_restriction}
    \end{align}
    by monotonicity.
    Letting $\ell \to \infty$ and applying successively~\eqref{eq:finite_integral_restriction}, Lemma~\ref{lem:negligible_integral}, and Proposition~\ref{prop:restriction_lower}, we obtain
    \begin{align}
        \limsup_{\ell \to \infty} \liminf_{n \to \infty} \frac{\E[\Gamma_{n,\ell}(x_n)]}{x_n \e^{-x_n}}
        & \ge \frac{1}{\sigma^3} \sqrt{\frac{2}{\pi}} \limsup_{\ell \to \infty} \int_{A_\ell+h}^\infty \e^{y-2h} \P(M_\ell > y) (y-2h) \d{y} \nonumber \\
        & \ge \frac{\e^{-2h}}{\sigma^3} \sqrt{\frac{2}{\pi}} \limsup_{\ell \to \infty} \int_{A_\ell+h}^\infty y \e^y \P(M_\ell > y) \d{y} \nonumber \\
        & = \frac{\e^{-2h}}{\sigma^3} \sqrt{\frac{2}{\pi}} \limsup_{\ell \to \infty} \int_0^\infty y \e^y \P(M_\ell > y) \d{y}. \label{eq:finite_integral_limsup}
    \end{align}
    Combined with~\eqref{eq:finite_integral_mallein}, this implies~\eqref{eq:finite_integral}.
\end{proof}

Let us conclude this section with a straightforward consequence of the lower bound~\eqref{eq:mallein_lower_bound}, which implies the positivity of constants~$C_{\mathrm{NL}}$ and~$C_{\mathrm{L}}$, up to their existence (proved in Section~\ref{sct:proof_of_theorem} below).

\begin{proposition}\label{prop:positivity}
Assume~\eqref{eq:assumption_supercriticality}, \eqref{eq:assumption_boundary_case}, \eqref{eq:assumption_gaussianity}, and~\eqref{eq:assumption_peeling_lemma}.
Then,
\[\liminf_{\ell\to\infty}\int_0^\infty y\e^y\P(M_\ell>y)\d{y}>0.\]
\end{proposition}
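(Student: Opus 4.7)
The plan is to read off the bound directly from Mallein's lower estimate~\eqref{eq:mallein_lower_bound}, after a change of variables which moves the bound onto the integration variable $y$.

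Fix any constant $A>0$. Mallein's bound with $n=\ell$ gives a constant $c>0$ such that $\P(M_\ell>m_\ell+z)\ge c(1+z)\e^{-z}$ for every $z\in[0,A\sqrt{\ell}]$. Setting $z=y-m_\ell$ and recalling $m_\ell=-\tfrac{3}{2}\log\ell$, this translates to the statement that for every $y\in[m_\ell,\,m_\ell+A\sqrt{\ell}]$,
\begin{equation*}
    \P(M_\ell>y)\ \ge\ c\,(1+y-m_\ell)\,\e^{-(y-m_\ell)}\ =\ c\,(1+y-m_\ell)\,\e^{-y}\,\ell^{-3/2}.
\end{equation*}
The only (entirely routine) point to verify is that the positive window $[0,A\sqrt{\ell}/2]$ is contained in this admissible range for all $\ell$ sufficiently large, which holds because $|m_\ell|=\tfrac{3}{2}\log\ell=o(\sqrt{\ell})$, and in particular $m_\ell+A\sqrt{\ell}\ge A\sqrt{\ell}/2$ eventually.

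Using $1+y-m_\ell\ge y$ (since $m_\ell<0$), the above lower bound gives, for $y\in[0,A\sqrt{\ell}/2]$ and $\ell$ large,
\begin{equation*}
    y\e^y\,\P(M_\ell>y)\ \ge\ c\,y^2\,\ell^{-3/2}.
\end{equation*}
Since the full integrand is nonnegative, restricting the range of integration only decreases the integral, so
\begin{equation*}
    \int_0^\infty y\e^y\,\P(M_\ell>y)\d{y}\ \ge\ c\,\ell^{-3/2}\int_0^{A\sqrt{\ell}/2} y^2\d{y}\ =\ \frac{c A^3}{24},
\end{equation*}
which is a strictly positive constant independent of $\ell$. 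Taking $\liminf$ as $\ell\to\infty$ yields the proposition. There is really no obstacle here beyond the trivial check that the logarithmic correction $m_\ell$ fits inside the $\sqrt{\ell}$-scale window for which~\eqref{eq:mallein_lower_bound} is available; once that is in place, the desired lower bound is obtained by a one-line computation.
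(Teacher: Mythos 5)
Your proof is correct and takes the same route as the paper's: apply Mallein's lower bound~\eqref{eq:mallein_lower_bound} with $n=\ell$ after the shift $z=y-m_\ell$, use $1+y-m_\ell\ge y$, and integrate over a $\sqrt{\ell}$-scale window to produce a constant lower bound. The only difference is cosmetic — you carry a parameter $A$ and carefully verify the window fits inside Mallein's admissible range for $\ell$ large, whereas the paper fixes the window $[0,\sqrt{\ell}]$ outright; both yield the same one-line computation.
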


\begin{proof}
By~\eqref{eq:mallein_lower_bound}, there exists $c>0$ such that, for any $\ell \ge 1$ and $y \in [0,\sqrt{\ell}]$,
\[\P(M_\ell>y) \ge \frac{c(1+y-m_\ell)}{\ell^{3/2}}\e^{-y}.\]
Hence, for any $\ell \ge 1$,
\[\int_0^\infty y\e^y\P(M_\ell>y)\d y \ge \int_0^{\sqrt{\ell}}y\e^y\P(M_\ell>y)\d y \ge c\ell^{-3/2}\int_0^{\sqrt{\ell}}y^2 \d y \ge \frac{c}{3},
\]
which completes the proof.
\end{proof}

\section{Non-contributing particles}\label{sct:non_contributing_particles}

Suppose that $(x_n)_{n \ge 0}$ is such that $x_n \to \infty$ and $x_n = O(\sqrt{n})$.
As in the proof of Proposition~\ref{prop:finite_integral}, define, for $n \ge 1$ and $0 \le k \le n$,
\begin{equation}\label{eq:def_barrier}
    f_n(k) = m_n-m_{n-k+1} = -\frac{3}{2} \log \frac{n}{n-k+1}.
\end{equation}
The main result of this section is Proposition~\ref{prop:upper_bound_G}.
It tells that particles contributing to the upper moderate deviations typically stay below the barrier $x_n + f_{n-\ell}(k)$ until time~$k = n - \ell$, for some large~$\ell$.
To formalize it, we define, for $0 \le \ell \le n$,
\begin{equation}\label{eq:def_G}
    G_{n,\ell} = \{ \exists |u|=n, V(u) > m_n+x_n, \exists k \le n-\ell, V(u_k) > x_n+f_{n-\ell}(k) \}.
\end{equation}
Instead of taking~$\ell$ a large constant, it will be convenient to consider a sequence $(\ell_n)$ that goes to infinity arbitrarily slowly.

\begin{proposition}\label{prop:upper_bound_G}
  Assume~\eqref{eq:assumption_supercriticality}, \eqref{eq:assumption_boundary_case}, and~\eqref{eq:assumption_gaussianity}.
  If $\ell_n \to \infty$ and $\ell_n = o(n)$, then $\P(G_{n,\ell_n}) = o(x_n\e^{-x_n})$.
\end{proposition}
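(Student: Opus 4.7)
My plan is to decompose $G_{n,\ell_n}$ over the first time $k \in \{1,\ldots,n-\ell_n\}$ at which some particle's ancestor crosses the barrier $x_n+f_{n-\ell_n}(\cdot)$, and to bound each contribution via the many-to-one formula (Lemma~\ref{lem:many_to_one}) combined with Hu's bound~\eqref{eq:hu_upper_bound} applied to the independent subtree of descendants, and then to reduce everything to random walk estimates from Section~\ref{sct:preliminaries}.

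More precisely, letting $E_{v,k} = \{\forall i < k,\ V(v_i) \le x_n + f_{n-\ell_n}(i),\ V(v) > x_n + f_{n-\ell_n}(k)\}$, the union bound gives
\begin{equation*}
\P(G_{n,\ell_n}) \le \sum_{k=1}^{n-\ell_n} \E\Big[\sum_{|v|=k} \mathds{1}_{E_{v,k}}\, \P_{V(v)}(M_{n-k} > m_n+x_n)\Big].
\end{equation*}
Applying Hu's bound to the inner probability gives the factor $C(1+V_k^+)\e^{-V_k^+}$ with $V_k = m_n+x_n-V(v)-m_{n-k}$, and then many-to-one turns the $v$-sum into a random walk expectation. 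Setting $a_k = m_n-m_{n-k}-f_{n-\ell_n}(k)$, a direct check shows that on the analogous RW event $E'_k$ the integrand $\e^{-S_k}(1+V_k^+)\e^{-V_k^+}$ is at most $(1+(a_k)_+)(n/(n-k))^{3/2}\e^{-x_n}$ (splitting according to the sign of $V_k$). Since $(a_k)_+ \le (3/2)\log\ell_n + O(1)$, the problem reduces to proving
\begin{equation*}
\sum_{k=1}^{n-\ell_n} (1+(a_k)_+)\Big(\tfrac{n}{n-k}\Big)^{3/2} \P(E'_k) = o(x_n).
\end{equation*}

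To bound $\P(E'_k)$ I would proceed in two regimes. For $k$ small I use Chebyshev together with~\eqref{eq:assumption_gaussianity} to obtain $\P(E'_k) \le \P(S_k > x_n + f_{n-\ell_n}(k)) \lesssim k/x_n^2$. For $k$ large I decompose further by the overshoot $S_k - (x_n+f_{n-\ell_n}(k)) \in (y,y+1]$ and by $S_{k-1} \in x_n+f_{n-\ell_n}(k-1)-(z,z+1]$, use independence of $X_k = S_k-S_{k-1}$ from the past, apply Mallein's Lemma~\ref{lem:mallein_random_walk_estimate} for the ballot part on $[0,k-1]$, and exploit the summability $\sum_t (1+t)\P(X_1 > t) < \infty$ coming from $\E[X_1^2] = \sigma^2 < \infty$, which yields $\P(E'_k) \lesssim x_n/k^{3/2}$ up to slowly-varying factors.

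The main obstacle is stitching the two regimes together so the total is $o(x_n)$ rather than merely $O(x_n)$. Summing the Chebyshev bound on $[1, x_n^{6/5}]$ contributes $O(x_n^{2/5})$, and summing the ballot bound on $[x_n^{6/5}, n/2]$ also contributes $O(x_n^{2/5})$; for $k > n/2$, the growing factor $(n/(n-k))^{3/2}$ is offset by $\P(E'_k) \lesssim x_n/k^{3/2}$ and $(1+(a_k)_+) \le C\log\ell_n$, giving $O(x_n \log\ell_n/\sqrt{\ell_n}) = o(x_n)$ thanks to $\ell_n \to \infty$. A delicate technical point is to handle the tail $z \gg \sqrt{k}$ in the ballot decomposition: with only the second moment available from~\eqref{eq:assumption_gaussianity}, one would need to use Lemma~\ref{lem:ballot_additional_variables} with a suitable choice of auxiliary variables $\xi_k$ to keep the heavy-tail contribution negligible, possibly leveraging the extra integrability provided by~\eqref{eq:assumption_peeling_lemma}.
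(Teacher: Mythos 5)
Your proof is essentially the paper's: union bound over the first barrier-crossing time $k$, Hu's bound~\eqref{eq:hu_upper_bound} on the descendant subtree, cancellation of exponentials via many-to-one (after splitting on the sign of $m_n+x_n-S_k-m_{n-k}$), and then Lemma~\ref{lem:mallein_random_walk_estimate} applied on $[0,k-1]$ together with $\E[S_1^2]<\infty$. The only cosmetic difference is the small-$k$ treatment: you cut at $x_n^{6/5}$ and apply Chebyshev, whereas the paper chooses $k_n \to \infty$ with $k_n = o(x_n)$ and there bounds both probabilities by one, giving $\sum_{k \le k_n} u_n(k) \le k_n\e^{-x_n-f_{n-\ell}(k_n)} = o(x_n\e^{-x_n})$; either works.

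The concern in your last paragraph should, however, be dropped. The proposition is stated without~\eqref{eq:assumption_peeling_lemma}, the paper proves it without that assumption, and Lemma~\ref{lem:ballot_additional_variables} serves a different purpose (it is used in the proof of Lemma~\ref{lem:peeling_lemma}, not here). The tail $z \gg \sqrt{k}$ is not actually a problem: the factor Lemma~\ref{lem:mallein_random_walk_estimate} produces is $(1+z\wedge\sqrt{k})$, which stops growing past $\sqrt{k}$, so summing against $\P(S_1 > z - O(1))$ over $z > \sqrt{k}$ gives $\frac{x_n\sqrt{k}}{k^{3/2}}\sum_{z>\sqrt{k}}\P(S_1>z-O(1)) \le \frac{Cx_n}{k^{3/2}}$ by Markov's inequality, while the range $z\le\sqrt{k}$ is controlled by $\sum_z(1+z)\P(S_1>z) \lesssim \E[(1+(S_1)_+)^2]<\infty$, as you already noted. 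The paper avoids this two-index bookkeeping altogether: applying the Markov property at time $k-1$ (rather than $k$) writes the $k$-th contribution as $\E[\phi_k(S_1)]$ with $\phi_k(y)\le Cx_n(1+y_+)^2/k^{3/2}$, whence $\E[\phi_k(S_1)]\le Cx_n/k^{3/2}$ follows at once from~\eqref{eq:assumption_gaussianity}. In short, your argument goes through with exactly what you already have; no extra integrability is needed, and invoking~\eqref{eq:assumption_peeling_lemma} would only weaken the statement.
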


\begin{proof}
    To simplify notations, let us abbreviate $\ell = \ell_n$.
    Recall that $M_n^{(u)}$ is defined in~\eqref{eq:relative_max}.
    By the union bound,
    \begin{equation*}
        \P(G_{n,\ell}) \le \sum_{k=1}^{n-\ell} \E \left[ \sum_{|u|=k} \mathds{1}_{\left\{ \substack{\forall j < k, V(u_j) \le x_n+f_{n-\ell}(j), \\ V(u) > x_n+f_{n-\ell}(k), \\ M_{n-k}^{(u)} + V(u) > m_n + x_n} \right\}} \right] = \sum_{k=1}^{n-\ell} u_n(k),
    \end{equation*}
    where, by the Markov property at time~$k$ and the many-to-one formula,
    \begin{equation*}
        u_n(k) = \E \left[ \e^{-S_k} \mathds{1}_{\left\{ \substack{\forall j < k, S_j \le x_n+f_{n-\ell}(j), \\ S_k > x_n+f_{n-\ell}(k)} \right\}} \P_{S_k}(M_{n-k} > m_n + x_n) \right].
    \end{equation*}
    Fix some sequence of integers $(k_n)$ such that $k_n \to \infty$ and $k_n = o(x_n)$.
    First assume $k \le k_n$.
    Observe that, bounding the probabilities by~$1$, we obtain $u_n(k) \le \e^{-x_{n} + f_{n-\ell}(k)}$.
    In particular, summing~$u_n(k)$ up to~$k_n$ yields
    \begin{equation*}
        \sum_{k=1}^{k_n} u_n(k) \le k_n \e^{-x_n-f_{n-\ell}(k_n)} = o(x_n\e^{-x_n}).
    \end{equation*}
    Now, assume $k \in [k_n,n-\ell]$.
    By~\eqref{eq:hu_upper_bound},
    \begin{equation*}
        \P_{S_k}(M_{n-k} > m_n + x_n) \le C(1+(m_n-m_{n-k}+x_n-S_k)_+) \e^{-(m_n-m_{n-k}+x_n-S_k)}.
    \end{equation*}
    Note that 
    \[
    m_n-m_{n-k}-f_{n-\ell}(k) =-\frac{3}{2}\log \frac{n}{n-\ell} + \frac{3}{2}\log \frac{n-k}{n-\ell-k+1} \le \frac{3}{2}\log \frac{n-k}{n-\ell-k+1} = |f_{n-k}(\ell)|.
    \]
    It follows that
    \begin{equation*}
        u_n(k) \le C (1+|f_{n-k}(\ell)|) \e^{-(m_n-m_{n-k}+x_n)} \P \left( \substack{\forall j < k, S_j \le x_n+f_{n-\ell}(j), \\ S_k > x_n+f_{n-\ell}(k)} \right).
    \end{equation*}
    By the Markov property at time $k-1$, the last factor is equal to $\E \left[ \phi_k(S_1) \right]$, where 
    \begin{equation*}
        \phi_k(y) = \P\left( \substack{\forall j < k, S_j \le x_n+f_{n-\ell}(j), \\ S_{k-1} > x_n+f_{n-\ell}(k) - y} \right) \le \frac{C(1+x_n)(1+y_+)^2}{k^{3/2}},
    \end{equation*}
    by Lemma~\ref{lem:mallein_random_walk_estimate}.
    By Assumption~\eqref{eq:assumption_gaussianity}, $S_1$ admits a finite second moment and then $\E \left[ \phi_k(S_1) \right] \le Cx_n/k^{3/2}$.
    Hence,
    \begin{equation*}
        \sum_{k=k_n}^{n-\ell} u_n(k) \le C x_n \e^{-x_n} \sum_{k=k_n}^{n-\ell} (1+|f_{n-k}(\ell)|)\frac{n^{3/2}}{(n-k)^{3/2}k^{3/2}} \le  C x_n \e^{-x_n} \left( \sum_{k=k_n}^{n/2} \frac{|f_{n-k}(\ell)|}{k^{3/2}} + \sum_{k=\ell}^{n/2} \frac{|f_k(\ell)|}{k^{3/2}} \right).
    \end{equation*}
    Note that, for $k \le n/2$, we have $|f_{n-k}(\ell)| \le C$, and for $k \ge \ell$, we have $|f_k(\ell)| \le (3/2) \log \ell$.
    Hence,
    \begin{equation*}
        \sum_{k=k_n}^{n-\ell} u_n(k) \le C x_n \e^{-x_n} \left( \sum_{k=k_n}^\infty \frac{1}{k^{3/2}} + (\log \ell) \sum_{k=\ell}^\infty \frac{1}{k^{3/2}} \right) = Cx_n \e^{-x_n}\left(\frac{1}{\sqrt{k_n}}+\frac{\log \ell}{\sqrt{\ell}}\right) = o(x_n\e^{-x_n}),
    \end{equation*}
    which concludes the proof.
\end{proof}

The following corollary of Proposition~\ref{prop:upper_bound_G} states that one can do without the dependence of~$\ell$ on~$n$, making first $n \to \infty$ and then $\ell \to \infty$.

\begin{corollary}\label{cor:upper_bound_G}
    Assume~\eqref{eq:assumption_supercriticality}, \eqref{eq:assumption_boundary_case}, and~\eqref{eq:assumption_gaussianity}.
    Then $\limsup_{\ell \to \infty} \limsup_{n \to \infty} \P(G_{n,\ell})/(x_n\e^{-x_n})=0$.
\end{corollary}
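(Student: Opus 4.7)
The plan is to deduce the corollary from Proposition~\ref{prop:upper_bound_G} by a standard contradiction / diagonalization argument, entirely analogous to the ``sequence-to-constant'' passages already invoked in the paper (e.g.\ Lemma~\ref{lem:from_sequence_to_large_constant}). No new information about the branching random walk is needed: the whole content of the corollary is already encoded in the proposition, and the only work is a logical extraction.

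Suppose the conclusion fails. Then there exist $\vep>0$ and an increasing sequence of integers $\ell_k\to\infty$ such that, for every $k$,
\[
\limsup_{n\to\infty}\frac{\P(G_{n,\ell_k})}{x_n\e^{-x_n}}>\vep.
\]
I would then inductively select $n_1<n_2<\cdots$ satisfying
\[
n_k\ge k^2\ell_k\qquad\text{and}\qquad \P(G_{n_k,\ell_k})>\vep\, x_{n_k}\e^{-x_{n_k}}.
\]
Define a global sequence $(L_n)_{n\ge 1}$ by setting $L_n:=\ell_k$ for $n\in[n_k,n_{k+1})$ (and $L_n:=1$ for $n<n_1$). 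Then $L_n\to\infty$, since $\ell_k\to\infty$, and for any $n\in[n_k,n_{k+1})$ one has $L_n/n\le\ell_k/n_k\le 1/k^2$, so $L_n=o(n)$. Proposition~\ref{prop:upper_bound_G} applies to this $(L_n)$ and yields $\P(G_{n,L_n})=o(x_n\e^{-x_n})$. Specializing to the subsequence $n=n_k$, we get $\P(G_{n_k,\ell_k})=o(x_{n_k}\e^{-x_{n_k}})$, contradicting the lower bound in the choice of $(n_k,\ell_k)$.

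The only point requiring any care is the simultaneous growth rate of $n_k$ relative to $\ell_k$, ensuring that the interpolating sequence $(L_n)$ satisfies $L_n=o(n)$; any inflation factor such as the $k^2$ above does the job. I therefore do not anticipate any real obstacle — the corollary is a purely soft consequence of Proposition~\ref{prop:upper_bound_G}.
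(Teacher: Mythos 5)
Your argument is correct and is essentially the same route the paper takes: the paper cites Lemma~\ref{lem:from_sequence_to_large_constant}, whose proof is exactly the ``sequence-to-constant'' diagonalization you spell out here (with the interpolating sequence $(L_n)$ playing the role of the sequence constructed in that lemma, and the growth condition $n_k \ge k^2 \ell_k$ handling the $o(n)$ hypothesis of Proposition~\ref{prop:upper_bound_G} in the same way the paper would by choosing $\gamma_n = o(n)$ in the lemma). The only cosmetic difference is that you re-derive the lemma in situ rather than invoking it.
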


\begin{proof}
    It is a straightforward consequence of Proposition~\ref{prop:upper_bound_G} and the Lemma~\ref{lem:from_sequence_to_large_constant}.
\end{proof}

\begin{remark}
    We mention that, to prove Corollary~\ref{cor:upper_bound_G}, it is also possible to directly adapt arguments from the proof of Proposition~\ref{prop:upper_bound_G}.
\end{remark}

\section{An intermediate equivalent for the moderate deviation probability}\label{sct:equivalent_expectation}

Let $(x_n)_{n \ge 0}$ be such that $x_n \to \infty$ and $x_n = O(\sqrt{n})$ as $n \to \infty$.
Recall that $M_n^{(u)}$ is defined in~\eqref{eq:relative_max} and~$f_n(k)$ in~\eqref{eq:def_barrier}.
We set
\begin{equation*}
    \Lambda_{n,\ell} \coloneq \sum_{|u|=n-\ell} \mathds{1}_{\{\forall i \le n-\ell, V(u_i) \le f_{n-\ell}(i)+x_n\}} \mathds{1}_{\{V(u)+M_\ell^{(u)}>m_n+x_n\}}.
\end{equation*}
The main result of this section is the following proposition.

\begin{proposition}\label{prop:expectation_equivalent}
    Assume~\eqref{eq:assumption_supercriticality}, \eqref{eq:assumption_boundary_case}, \eqref{eq:assumption_gaussianity}, and~\eqref{eq:assumption_peeling_lemma}.
    As $n \to \infty$, if $\ell_n \to \infty$ and $\ell_n = o(n)$, then
    \[\P(M_n > m_n + x_n) \sim \E[\Lambda_{n,\ell_n}].\]
\end{proposition}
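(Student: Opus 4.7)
The proof splits into the two directions. For the upper bound, I first observe the set inclusion $\{M_n>m_n+x_n\}\cap G_{n,\ell_n}^c\subseteq\{\Lambda_{n,\ell_n}\ge 1\}$: on the complement of~$G_{n,\ell_n}$, any particle~$u$ with $|u|=n$ and $V(u)>m_n+x_n$ has its ancestry below $f_{n-\ell_n}(\cdot)+x_n$ up to time $n-\ell_n$, so the ancestor $u_{n-\ell_n}$ satisfies the barrier condition, and its descendant~$u$ certifies $V(u_{n-\ell_n})+M_{\ell_n}^{(u_{n-\ell_n})}>m_n+x_n$; hence~$u_{n-\ell_n}$ is counted by~$\Lambda_{n,\ell_n}$. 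Combining this inclusion with Proposition~\ref{prop:upper_bound_G} yields
\[
\P(M_n>m_n+x_n)\le\E[\Lambda_{n,\ell_n}]+\P(G_{n,\ell_n})=\E[\Lambda_{n,\ell_n}]+o(x_n\e^{-x_n}).
\]
Since Mallein's lower bound~\eqref{eq:mallein_lower_bound} gives $\P(M_n>m_n+x_n)\gtrsim x_n\e^{-x_n}$, the same lower estimate transfers to~$\E[\Lambda_{n,\ell_n}]$, and the error $o(x_n\e^{-x_n})$ gets absorbed, yielding $\P(M_n>m_n+x_n)\le(1+o(1))\E[\Lambda_{n,\ell_n}]$.

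For the lower bound, I start from $\{\Lambda_{n,\ell_n}\ge 1\}\subseteq\{M_n>m_n+x_n\}$ and apply Cauchy--Schwarz:
\[
\P(M_n>m_n+x_n)\ge\P(\Lambda_{n,\ell_n}\ge 1)\ge\frac{\E[\Lambda_{n,\ell_n}]^2}{\E[\Lambda_{n,\ell_n}^2]}.
\]
Since $\Lambda_{n,\ell_n}$ is $\N$-valued, $\E[\Lambda_{n,\ell_n}^2]\ge\E[\Lambda_{n,\ell_n}]$, so the target bound $\P(\Lambda_{n,\ell_n}\ge 1)\ge(1-o(1))\E[\Lambda_{n,\ell_n}]$ reduces to showing that the off-diagonal contribution to the second moment is $o(\E[\Lambda_{n,\ell_n}])$. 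As noted in the introduction, this is hopeless for the raw~$\Lambda_{n,\ell_n}$, whose second moment may even be infinite because of rare configurations with many atypical siblings. Following the Bramson--Ding--Zeitouni strategy, I replace~$\Lambda_{n,\ell_n}$ with a truncated variable $\tilde\Lambda_{n,\ell_n}\le\Lambda_{n,\ell_n}$ that only counts ancestors whose subtree (down to generation~$n$, along the path realizing the extremal descendant) exhibits no ``too-many-children-too-high'' event, at scales to be calibrated.

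With this setup I carry out two steps: (i) $\E[\Lambda_{n,\ell_n}-\tilde\Lambda_{n,\ell_n}]=o(x_n\e^{-x_n})$, so that the truncation is negligible in first moment; (ii) $\E[\tilde\Lambda_{n,\ell_n}^2]=(1+o(1))\E[\tilde\Lambda_{n,\ell_n}]$. Step~(i) relies on the many-to-one formula and a peeling-type argument in the spirit of~\cite[Lemma~B.2]{Aidekon2013}, with Lemma~\ref{lem:ballot_additional_variables} supplying the random-walk estimate and the integrability~\eqref{eq:assumption_peeling_lemma} ensuring summability of the discarded configurations. Step~(ii) splits the double sum according to the generation~$k$ of the most recent common ancestor of $(u,v)$: conditioning on~$\hat{\mathcal{F}}_k$ and using independence of the two descending subtrees, one reduces small~$k$ to a two-spine computation controlled by trajectory estimates from Section~\ref{sct:preliminaries}, while large~$k$ is handled by the truncation, which kills the atypical overlapping behaviors. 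Plugging $\tilde\Lambda_{n,\ell_n}$ into Cauchy--Schwarz and using (i) then delivers $\P(M_n>m_n+x_n)\ge(1-o(1))\E[\tilde\Lambda_{n,\ell_n}]=(1-o(1))\E[\Lambda_{n,\ell_n}]$, closing the proof. I expect step~(ii) to be the main obstacle: the truncation has to be slack enough for (i) to remain true under the close-to-optimal assumptions~\eqref{eq:assumption_gaussianity} and~\eqref{eq:assumption_peeling_lemma}, yet tight enough to control the off-diagonal second moment uniformly in the split generation~$k$.
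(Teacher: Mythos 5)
Your plan matches the paper's proof structure: for the upper bound, the inclusion $\{M_n>m_n+x_n\}\cap G_{n,\ell_n}^c\subseteq\{\Lambda_{n,\ell_n}\ge 1\}$ plus Proposition~\ref{prop:upper_bound_G} and Mallein's lower bound~\eqref{eq:mallein_lower_bound} (the paper invokes these via Lemma~\ref{lem:lower_bound_Lambda}); for the lower bound, Cauchy--Schwarz on a truncated count, a peeling argument to show the truncation is negligible in first moment, and a second-moment computation over the common-ancestor generation. These are exactly the paper's Lemmas~\ref{lem:peeling_lemma} and~\ref{lem:equivalent_Lambda_Lambda2}.

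One imprecision worth flagging: you describe the truncation as imposed on the \emph{descendant} subtree of $u$, ``down to generation $n$, along the path realizing the extremal descendant.'' The correct place to truncate is the \emph{ancestry} of $u$, i.e.\ generations $0$ through $n-\ell-1$. The paper's set $F_K(x_n)$ demands $\xi(u_j)\le K+(x_n+f_{n-\ell}(j)-V(u_j))/2$ for all $j<n-\ell$, where $\xi(u_j)$ is a logarithmic measure of the total sibling mass produced at $u_j$. The reason the ancestry is the right place is visible in the second-moment expansion: the off-diagonal terms run over pairs $(u,v)$ with $|u\wedge v|=k\le n-\ell-1$, and the blowup is governed by how many siblings the common ancestor $u_k$ spawns. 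After generation $n-\ell$ the subtrees of $u$ and $v$ are independent by the branching property and contribute only the factors $P_{n,\ell}(V(u))P_{n,\ell}(V(v))$, so restricting the descendant side gains nothing. With this correction, your steps (i) and (ii) line up with Lemmas~\ref{lem:peeling_lemma} and~\ref{lem:equivalent_Lambda_Lambda2} respectively, including the use of Lemma~\ref{lem:ballot_additional_variables}, the spinal decomposition (Theorem~\ref{th:spinal_decomposition}), and the integrability~\eqref{eq:assumption_peeling_lemma}.
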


Let us write $\ell = \ell_n$ for short.
Note that
\begin{equation*}
    \P(M_n > m_n + x_n) \ge \P(\Lambda_{n,\ell} \ge 1) \ge \frac{\E[\Lambda_{n,\ell}]^2}{\E[\Lambda_{n,\ell}^2]},
\end{equation*}
by the Cauchy--Schwarz inequality.
Therefore, a way to obtain a lower bound for $\P(M_n > m_n + x_n)$ would be to show that $\E[\Lambda_{n,\ell}] \sim \E[\Lambda_{n,\ell}^2]$.
Unfortunately, the second moment of~$\Lambda_{n,\ell}$ is not necessarily finite under our assumptions.
For this reason, we rather consider a truncated version of~$\Lambda_{n,\ell}$.
For $u\in\T$, we denote by $\Omega(u)$  the set of children of~$u$ and write $\Omega_k = \Omega(w_k)\setminus\{w_{k+1}\}$ for short.
We will see in the proof of Lemma~\ref{lem:equivalent_Lambda_Lambda2} that an appropriate way to truncate~$\Lambda_{n,\ell}$ is to add a control on the quantity
\begin{equation*}
    \xi(u) = \log_+ \sum_{v \in \Omega(u)} (1+(V(u)-V(v))_+) \e^{V(v)-V(u)}.
\end{equation*}
In other words, we restrict~$\Lambda_{n,\ell}$ to particles that do not have ``too many children too high''.
More precisely, we consider, for $K > 0$ and $x\in\R$,
\begin{align}
    & E(x)\coloneq\{u\in\T : \forall j \le |u|, V(u_j) \le f_{|u|}(j)+x\}, \label{eq:def_E} \\
    & F_{K}(x)\coloneq\{u\in \T : \forall j< |u|, \xi(u_j) \le K+(x+f_{|u|}(j)-V(u_j))/2\},\label{eq:def_F} \\
    & \tilde{E}_K(x) \coloneq E(x)\cap F_K(x), \label{eq:def_E_tilde}
\end{align}
and
\begin{equation*}
    \Lambda_{n,\ell,K}\coloneq\sum_{|u|=n-\ell}\mathds{1}_{\{u\in\tilde{E}_K(x_n)\}} \mathds{1}_{\{V(u)+M_\ell^{(u)}>m_n+x_n\}}.
\end{equation*}

\begin{lemma}\label{lem:lower_bound_Lambda}
     Assume~\eqref{eq:assumption_supercriticality}, \eqref{eq:assumption_boundary_case}, \eqref{eq:assumption_gaussianity}, and~\eqref{eq:assumption_peeling_lemma}.
     If $\ell_n \to \infty$ and $\ell_n = o(n)$, then there exists $c > 0$ such that, for any $n \ge 1$, we have $\E[\Lambda_{n,\ell_n}] \ge c x_n \e^{-x_n}$.
\end{lemma}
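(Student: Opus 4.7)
The plan is to relate $\E[\Lambda_{n,\ell_n}]$ to a random-walk quantity via the many-to-one formula (Lemma~\ref{lem:many_to_one}), and then to combine Mallein's lower bound~\eqref{eq:mallein_lower_bound} with a curved-barrier ballot-type estimate. More precisely, by the Markov property at time $n-\ell_n$ followed by many-to-one, I would first obtain
\[
\E[\Lambda_{n,\ell_n}] \;=\; \E\!\left[\e^{-S_{n-\ell_n}}\,\mathds{1}_{\{\forall i\le n-\ell_n,\,S_i\le x_n+f_{n-\ell_n}(i)\}}\,\P\!\left(M_{\ell_n}>m_n+x_n-S_{n-\ell_n}\right)\right],
\]
where $(S_k)$ is the associated random walk. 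Setting $z:=m_n-m_{\ell_n}+x_n-S_{n-\ell_n}$, the lower bound~\eqref{eq:mallein_lower_bound} gives $\P(M_{\ell_n}>m_{\ell_n}+z)\ge c(1+z)\e^{-z}$ on the event $\{z\in[0,A\sqrt{\ell_n}]\}$, for a fixed $A>0$ and the corresponding $c$. Since $m_k=-(3/2)\log k$, one has $\e^{-S_{n-\ell_n}-z}=(n/\ell_n)^{3/2}\e^{-x_n}$, so
\[
\E[\Lambda_{n,\ell_n}] \;\ge\; c\,(n/\ell_n)^{3/2}\,\e^{-x_n}\;\E\!\left[(1+z)\,\mathds{1}_{\text{ballot}}\,\mathds{1}_{\{z\in[0,A\sqrt{\ell_n}]\}}\right].
\]

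Set $N:=n-\ell_n$ and reparametrize by the deficit $y:=x_n+f_N(N)-S_N$. Using $f_N(N)=-(3/2)\log N$, a short computation gives $z=y+(3/2)\log\ell_n+o(1)$, so on the ballot event (where $y\ge 0$) one has $(1+z)\ge y$, and the range $\{y\in[0,c_1\sqrt{\ell_n}]\}$ is contained in the Mallein window for any $c_1<A$ and $n$ large. The proof thus reduces to the ballot-type lower bound
\[
\int_0^{c_1\sqrt{\ell_n}} y\,\P\!\left(\forall i\le N,\, S_i\le x_n+f_N(i),\; S_N\in x_n+f_N(N)-[y,y+\d y]\right) \;\gtrsim\; \frac{x_n\ell_n^{3/2}}{n^{3/2}},
\]
which, combined with the Gaussian factor (bounded below since $x_n=O(\sqrt{n})$) and the prefactor $(n/\ell_n)^{3/2}\e^{-x_n}$, yields the claim.

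To establish this ballot bound, I would sandwich the curved barrier from below by a two-stage flat barrier: set it equal to $x_n-(3/2)\log 2$ on $[0,\lfloor N/2\rfloor]$ and to $x_n-(3/2)\log N$ on $[\lfloor N/2\rfloor,N]$. Since $f_N(k)\ge -(3/2)\log 2$ on the first half and $f_N(k)\ge f_N(N)$ on the second half, the two-stage flat event is strictly more restrictive than the curved event, and its probability provides a valid lower bound. Corollary~\ref{cor:double_barrier_estimate} then applies with $\lambda=1/2$, $x=x_n-(3/2)\log 2$, and drop $z=(3/2)\log(N/2)=o(\sqrt{N})$, producing the density $\asymp \frac{x_n y}{N^{3/2}}\e^{-x_n^2/(2N\sigma^2)}$; integrating $y\cdot y$ over $[0,c_1\sqrt{\ell_n}]$ supplies the $\ell_n^{3/2}$ factor.

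The main obstacle is exactly this curved-barrier ballot estimate: no dedicated lower-bound analog of Corollary~\ref{cor:simple_barrier_estimate} for curved barriers is provided in the preliminaries, and a single flat substitution either fails to dominate (taking the upper envelope $x_n$) or makes the barrier negative when $x_n$ grows slower than $\log n$ (taking the lower envelope). The two-stage sandwich circumvents this because the drop $(3/2)\log N$ remains $o(\sqrt{N})$, which is exactly the window of validity of Corollary~\ref{cor:double_barrier_estimate}. A secondary but essential point is that the $(1+z)$ weight from Mallein's bound must be used in the non-trivial form $(1+z)\ge y$ rather than the naive $(1+z)\ge 1$, since the latter would cost a factor $\sqrt{\ell_n}$ and yield a bound tending to $0$.
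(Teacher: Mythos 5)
Your proposal is correct, but it takes a genuinely different (and considerably longer) route than the paper. The paper's proof is a one-liner: since on the event $\{M_n > m_n+x_n\} \cap G_{n,\ell}^c$ the ancestor at time $n-\ell$ of a particle above $m_n+x_n$ necessarily contributes to $\Lambda_{n,\ell}$, one has $\E[\Lambda_{n,\ell}] \ge \P(\Lambda_{n,\ell}\ge 1) \ge \P(M_n > m_n + x_n) - \P(G_{n,\ell})$, and then it suffices to invoke Mallein's lower bound~\eqref{eq:mallein_lower_bound} for the first term and Proposition~\ref{prop:upper_bound_G} for the second. This modular argument costs nothing new because $\P(G_{n,\ell}) = o(x_n\e^{-x_n})$ has already been established.

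What you do instead is compute a lower bound on $\E[\Lambda_{n,\ell_n}]$ directly from the many-to-one representation~\eqref{eq:decomposition_Lambda_non_lattice}, using Mallein's bound inside the expectation plus a two-level flat barrier to access Corollary~\ref{cor:double_barrier_estimate}. This is essentially a coarsened version of the sharp lower bound later proved in Proposition~\ref{prop:lower_bound} (the paper also uses Fatou, a two-level barrier with drop $(3/2)\log N = o(\sqrt N)$, and Corollary~\ref{cor:double_barrier_estimate}), so you have independently rediscovered the right technique; the trade-off is that you prove more than is needed at this stage and duplicate work. Your two observations flagged as essential --- the two-stage flat sandwich (since a single flat replacement fails), and the non-trivial use of $(1+z)\ge y$ rather than $(1+z)\ge 1$ --- are both accurate. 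The only loose end is the handling of the $(1+o_y(1))$ error factor in Corollary~\ref{cor:double_barrier_estimate}, which is uncontrolled for small $y$; to make the integration rigorous you should restrict the sum to, say, $y\in[\sqrt{\ell_n}/2,\,c_1\sqrt{\ell_n}]$, where $y\to\infty$ and the dominant $\ell_n^{3/2}$ contribution is still captured. With that minor fix the argument goes through.
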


\begin{proof}
    Since $\E[\Lambda_{n,\ell}] \ge \P(M_n > m_n + x_n) - \P(G_{n,\ell})$, it suffices to apply~\eqref{eq:mallein_lower_bound} and Proposition~\ref{prop:upper_bound_G}.
\end{proof}

\begin{lemma}\label{lem:peeling_lemma}
 Assume~\eqref{eq:assumption_supercriticality}, \eqref{eq:assumption_boundary_case}, \eqref{eq:assumption_gaussianity}, and~\eqref{eq:assumption_peeling_lemma}.
 If $\ell > 0$ and $\ell = o(n)$, then 
 \[\lim_{K \to \infty} \limsup_{n \to \infty} \frac{\E[\Lambda_{n,\ell}]}{\E[\Lambda_{n,\ell,K}]} = 1.\]
\end{lemma}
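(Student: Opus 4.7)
The plan is to bound the difference $\E[\Lambda_{n,\ell} - \Lambda_{n,\ell,K}]$ by $\eta(K)(1+o_n(1))x_n \e^{-x_n}$ with $\eta(K) \to 0$ as $K \to \infty$. Combined with the lower bound $\E[\Lambda_{n,\ell}] \ge c x_n \e^{-x_n}$ from Lemma~\ref{lem:lower_bound_Lambda}, this will force the ratio $\E[\Lambda_{n,\ell,K}]/\E[\Lambda_{n,\ell}]$ to tend to $1$ as first $n \to \infty$ and then $K \to \infty$, which is precisely the claim.

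The difference $\Lambda_{n,\ell} - \Lambda_{n,\ell,K}$ counts particles $u$ at generation $n - \ell$ with $u \in E(x_n)$, $V(u) + M_\ell^{(u)} > m_n + x_n$, and for which there exists $j < n-\ell$ with $V(u_j) > x_n + f_{n-\ell}(j) - 2(\xi(u_j)-K)_+$. The first step is to apply the Markov property at generation $n-\ell$ to decouple the event on $M_\ell^{(u)}$, and to use Hu's upper bound~\eqref{eq:hu_upper_bound} to estimate $\P_{V(u)}(M_\ell > m_n + x_n) \le C \ell^{-3/2}(1 + y(u)_+)\e^{V(u)-m_n-x_n}$ where $y(u) = m_n + x_n - V(u) - m_\ell$. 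This converts the expectation into a sum over particles weighted by $\e^{V(u)}$, which the many-to-one formula applied with the additional variables $\xi(u_{k-1})$ turns into an expectation over a random walk $(S_j)_{j \le n-\ell}$ endowed with iid auxiliary variables $\xi_j$, where the joint law of $(S_j - S_{j-1}, \xi_j)$ is that of the size-biased offspring decomposition.

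Setting $\xi'_k \coloneq 2(\xi_k - K)_+$, the bad-ancestor condition reads $S_j > x_n + f_{n-\ell}(j) - \xi'_{j+1}$, which matches exactly the event of Lemma~\ref{lem:ballot_additional_variables}. Discretising the final position $S_{n-\ell}$ into unit intervals indexed by $i \ge 0$ and applying Lemma~\ref{lem:ballot_additional_variables} in the range $i \le \sqrt{n}$, together with a Gaussian tail estimate on $S_{n-\ell}$ for $i > \sqrt{n}$, should yield a total bound of the form $C \eta(K) x_n \e^{-x_n}$ up to lower-order terms. The constant $\eta(K)$ involves $\P(\xi'_1 \ge S_1)^{1/2} + \E[(\xi'_1 - S_1)_+^2]^{1/2}$ and similar quantities. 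Because $\E[h(\xi_1)] = \E[X \, h(\xi(\varnothing))]$ by many-to-one, with $\xi(\varnothing) = \log_+(X + \tilde X)$, the close-to-optimal Assumption~\eqref{eq:assumption_peeling_lemma}, namely $\E[X (\log_+ X)^2] < \infty$ and $\E[\tilde X \log_+ \tilde X] < \infty$, together with $\E[S_1^2] < \infty$ from~\eqref{eq:assumption_gaussianity}, is precisely what forces $\E[X (\xi(\varnothing) - K)_+^2] \to 0$ and hence $\eta(K) \to 0$ as $K \to \infty$.

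The main obstacle will be the handling of the large-$y$ tail: a naive integration of Lemma~\ref{lem:ballot_additional_variables} in the final position against the linear factor $(1 + y(u)_+)$ coming from Hu's bound produces a formally divergent sum, so one must split the range of $S_{n-\ell}$ and treat the region far below the barrier by a Gaussian tail estimate for the random walk, exploiting the variance bound from~\eqref{eq:assumption_gaussianity}. A secondary delicate point is the verification that $\eta(K) \to 0$ under the close-to-optimal assumption~\eqref{eq:assumption_peeling_lemma}, which requires carefully comparing $\xi(\varnothing)^2$ to $(\log_+ X)^2 + (\log_+ \tilde X)^2$ and invoking the size-biasing by $X$ when estimating each of the four terms entering $\eta(K)$.
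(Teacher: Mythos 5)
Your high-level plan is the same as the paper's: control $\E[\Lambda_{n,\ell}]-\E[\Lambda_{n,\ell,K}]$ by passing to the spine via the many-to-one/spinal decomposition, apply Lemma~\ref{lem:ballot_additional_variables} with $\xi_{j+1}=2(\xi(w_j)-K)$ to extract a factor $\eta(K)$, show $\eta(K)\to 0$ from~\eqref{eq:assumption_gaussianity} and~\eqref{eq:assumption_peeling_lemma} via the size-biased law of $(V(w_1),\xi(w_0))$, and divide by the lower bound of Lemma~\ref{lem:lower_bound_Lambda}. That structure, including the $\eta(K)\to 0$ verification, is sound and matches the paper.

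The gap is in the treatment of $\P_{V(u)}(M_\ell>m_n+x_n)$. You propose to replace it immediately by Hu's bound $C(1+y(u)_+)\e^{-y(u)}$. After cancellation of the exponentials against $\e^{-S_{n-\ell}}$ this leaves a constant prefactor $\e^{-m_n-x_n+m_\ell}=(n/\ell)^{3/2}\e^{-x_n}$ times $\hat\E\bigl[\mathds{1}_{\{w_{n-\ell}\in E(x_n)\setminus F_K(x_n)\}}(1-m_\ell+i)\bigr]$, with $i=m_{n-\ell}+x_n-V(w_{n-\ell})\ge 0$. Discretising and applying Lemma~\ref{lem:ballot_additional_variables} gives $\sum_i(1-m_\ell+i)(1+i)$, which diverges. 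You correctly flag this, but the proposed fix does not close it: with only~\eqref{eq:assumption_gaussianity}, the best tail estimate for $\hat\P(S_{n-\ell}\le m_{n-\ell}+x_n-i)$ is Chebyshev, of order $(n-\ell)/i^2$, and $\sum_{i>T}(1-m_\ell+i)\cdot(n-\ell)/i^2$ still diverges logarithmically; moreover, the surviving prefactor $(n/\ell)^{3/2}\e^{-x_n}$ is far too large (one needs to beat a factor $\ell^{3/2}/n^{3/2}$, which a Gaussian/Chebyshev tail on $S_{n-\ell}$ alone cannot produce when $\ell=o(n)$, and one also loses the $\eta(K)$ factor on the tail piece). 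The root cause is that Hu's bound by itself cannot even show $\int_0^\infty y\e^y\P(M_\ell>y)\,\d y$ is bounded uniformly in $\ell$ (it gives $C\ell^{-3/2}\int_0^\infty y(1+y-m_\ell)\,\d y=\infty$); the uniform boundedness is the content of Proposition~\ref{prop:finite_integral}, which requires the barrier argument and Doney/Pain-type estimates and encodes a Gaussian-type decay in $y$ that you throw away by applying Hu's bound too early.

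The paper avoids all of this by not touching $\P_{V(w_{n-\ell})}(M_\ell>m_n+x_n)$: after decomposing the final position into unit slices indexed by $i$, the contribution is $\sum_{i\ge 0}(i+1)\e^{i+1}\P(M_\ell>i-1)$, which is bounded uniformly in $\ell$ by Proposition~\ref{prop:finite_integral} together with Lemma~\ref{lem:negligible_integral}. That is the one step your argument is missing; once you replace Hu's bound by a direct invocation of Proposition~\ref{prop:finite_integral} at the stage of the discretised sum, your proposal becomes the paper's proof.
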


\begin{proof}
    Recall that $E(x)$ and $F_K(x)$ are defined in~\eqref{eq:def_E} and~\eqref{eq:def_F}.
    Using the branching property and then the probability distribution~$\bar{\P}$ and~$\hat{\P}$ introduced in Section~\ref{sct:spinal_decomposition}, we rewrite
    \begin{align*}
        \E[\Lambda_{n,\ell}] - \E[\Lambda_{n,\ell,K}]
        & = \E \left[ \sum_{|u|=n-\ell} \mathds{1}_{\{u \in E(x_n) \setminus F_K(x_n)\}} \P_{V(u)}(M_\ell > m_n+x_n) \right] \\
        & = \bar{\E} \left[ \sum_{|u|=n-\ell} \frac{\e^{V(u)}}{W_n} \e^{-V(u)} \mathds{1}_{\{u \in E(x_n) \setminus F_K(x_n)\}} \P_{V(u)}(M_\ell > m_n+x_n) \right] \\
        & = \hat{\E} \left[ \e^{-V(w_{n-\ell})} \mathds{1}_{\{w_{n-\ell} \in E(x_n) \setminus F_K(x_n)\}} \P_{V(w_{n-\ell})}(M_\ell > m_n+x_n) \right],
    \end{align*}
    by the spinal decomposition theorem (Theorem~\ref{th:spinal_decomposition}).
    By decomposing the event $\{V(w_{n-\ell}) \le f_{n-\ell}(n-\ell) + x_n\}$, we obtain
    \begin{equation*}
        \E[\Lambda_{n,\ell}] - \E[\Lambda_{n,\ell,K}] \le \sum_{i \ge 0} \e^{-x_n-m_{n-\ell}+i+1} \hat{\P} \left( \substack{w_{n-\ell}\in E(x_n) \setminus F_K(x_n), \\ x_n+m_{n-\ell}-V(w_{n-\ell}) \in [i,i+1]} \right) \P(M_\ell > m_n - m_{n-\ell} + i),
    \end{equation*}
    since $f_{n-\ell}(n-\ell) = m_{n-\ell}$.
    By Lemma~\ref{lem:ballot_additional_variables}, applied with $\xi_{j+1} = 2\xi(w_j) - 2K$,
    \begin{equation*}
        \hat{\P} \left( \substack{w_{n-\ell}\in E(x_n) \setminus F_K(x_n), \\ x_n+m_{n-\ell}-V(w_{n-\ell}) \in [i,i+1]} \right) \le C\eta(K)\frac{(x_n+1)(i+1)}{(n-\ell)^{3/2}},
    \end{equation*}
    where
    \begin{align*}
        \eta(K)
        & = \hat{\P}(\xi_1 - V(w_1) \ge 0)^{1/2} + \hat{\E} \left[ (\xi_1 - V(w_1))_+^2 \right]^{1/2} + \hat{\P}(\xi_1+1 \ge 0)^{1/2} + \hat{\E} \left[ (\xi_1+1)_+^2 \right]^{1/2} \\
        & \le \hat{\P}(2\xi(w_0)_+-V(w_1) \ge 2K)^{1/2} + \hat{\E} \left[ (2\xi(w_0)_+ - 2K - V(w_1))^2 \right]^{1/2} \\
        & \qquad + \hat{\P}(2\xi(w_0)_+ \ge 2K-1)^{1/2} + \hat{\E} \left[ (2\xi(w_0)_+ - 2K + 1)_+^2 \right]^{1/2}.
    \end{align*}
    But, by the spinal decomposition theorem and~\eqref{eq:assumption_gaussianity},
    \begin{equation*}
        \hat{\E} \left[ V(w_1)^2 \right] = \bar{\E} \left[ \sum_{|u|=1} V(u)^2 \frac{\e^{V(u)}}{W_1} \right] = \E \left[ \sum_{|u|=1} V(u)^2 \e^{V(u)} \right] < \infty,
    \end{equation*}
    where $W_n$ is defined in~\eqref{eq:additive_martingale}.
    In the same way, by the spinal decomposition theorem and~\eqref{eq:assumption_peeling_lemma} together with~\cite[Lemma~B.1]{Aidekon2013},
    \begin{equation*}
        \hat{\E} \left[ \xi(w_0)^2 \right] \le \bar{\E} \left[ \log_+^2 \sum_{|u|=1} (1+V(u)_-) \e^{V(u)} \right] = \E \left[ W_1 \log_+^2 \sum_{|u|=1} (1+V(u)_-) \e^{V(u)} \right] < \infty.
    \end{equation*}
    It follows that $\eta(K) = o_K(1)$.
    Hence,
    \begin{equation*}
        \E[\Lambda_{n,\ell}] - \E[\Lambda_{n,\ell,K}] \le x_n\e^{-x_n} o_K(1) \left( \sum_{i \ge0} (i+1)\e^{i+1} \P(M_\ell > i-1) \right).
    \end{equation*}
    By Proposition~\ref{prop:finite_integral} and Lemma~\ref{lem:negligible_integral}, the last sum in brackets is bounded by some constant.
    In view of Lemma~\ref{lem:lower_bound_Lambda}, this concludes.
\end{proof}

The following lemma shows that the second moment of~$\Lambda_{n,\ell,K}$ is asymptotically equivalent to its expectation when~$K$ is large enough.

\begin{lemma}\label{lem:equivalent_Lambda_Lambda2}
    Assume~\eqref{eq:assumption_supercriticality}, \eqref{eq:assumption_boundary_case}, \eqref{eq:assumption_gaussianity}, and~\eqref{eq:assumption_peeling_lemma}.
    If $\ell_n \to \infty$ and $\ell_n = o(n)$, then, for~$K$ large enough,
    \[\lim_{n\to\infty}\left|\frac{\E[\Lambda_{n,\ell_n,K}^2]}{\E[\Lambda_{n,\ell_n,K}]}-1\right|=0.\]
\end{lemma}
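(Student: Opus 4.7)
The strategy is the standard second-moment method, now feasible thanks to the truncation $F_K$. Since $\Lambda_{n,\ell_n,K}$ is a sum of $\{0,1\}$-valued indicators,
\begin{equation*}
    \E[\Lambda_{n,\ell_n,K}^2] - \E[\Lambda_{n,\ell_n,K}] = \E\left[\sum_{\substack{u \ne v \\ |u|=|v|=n-\ell_n}} \mathds{1}_u \mathds{1}_v \right],
\end{equation*}
and since Lemma~\ref{lem:lower_bound_Lambda} already gives $\E[\Lambda_{n,\ell_n,K}] \ge c x_n \e^{-x_n}$, the lemma reduces to showing that the right-hand side is $o(x_n \e^{-x_n})$. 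I would decompose this off-diagonal sum according to the generation $k$ of the most recent common ancestor $w = u \wedge v$, and condition on $\hF_{k+1}$, so that the contributions from the subtrees rooted at distinct children of $w$ become independent. Applying $\sum_{v_1 \ne v_2} a_{v_1} a_{v_2} \le (\sum_v a_v)^2$ then gives $\E[\Lambda_{n,\ell_n,K}^2] - \E[\Lambda_{n,\ell_n,K}] \le \sum_{k=0}^{n-\ell_n-1} T_k$ with
\begin{equation*}
    T_k \le \E\left[\sum_{|w|=k} \mathds{1}_{A_k(w)} \Bigl(\sum_{v \in \Omega(w)} \Psi_k(V(v))\Bigr)^2\right],
\end{equation*}
where $A_k(w)$ encodes all ancestral conditions shared by $u$ and $v$, in particular the truncation $\xi(w) \le K + (x_n + f_{n-\ell_n}(k) - V(w))/2$, and $\Psi_k(y)$ is the mean of the one-tree functional defining $\Lambda$ starting from $y$ at time $k+1$.

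To estimate $\Psi_k(y)$, I would combine Hu's inequality~\eqref{eq:hu_upper_bound} on the probability that the subtree produces a descendant above $m_n+x_n$ with the ballot-type bound from Lemma~\ref{lem:mallein_random_walk_estimate} applied after many-to-one (as in~\eqref{eq:finite_integral_mallein}), obtaining on the relevant range a bound of the form $\Psi_k(y) \le C(1+(m_n+x_n-y)_+)\e^{-(m_n+x_n-y)}$. Using the elementary inequality $(1+m_n+x_n-V(v)) \le (1+(m_n+x_n-V(w))_+)(1+(V(w)-V(v))_+)$ and the definition $\sum_{v \in \Omega(w)}(1+(V(w)-V(v))_+)\e^{V(v)-V(w)} \le \e^{\xi(w)}$, the inner sum is controlled by
\begin{equation*}
    \sum_{v \in \Omega(w)} \Psi_k(V(v)) \le C\e^K (1+(m_n+x_n-V(w))_+)\e^{V(w)-m_n-x_n+(x_n+f_{n-\ell_n}(k)-V(w))/2}
\end{equation*}
once the truncation on $\xi(w)$ is plugged in. Squaring and applying Lemmas~\ref{lem:many_to_one} and~\ref{lem:mallein_random_walk_estimate} on the spine $w_0,\ldots,w_k$ reduces $T_k$ to a random walk expectation of the form $\E[\mathds{1}_{\{\forall j \le k,\, S_j \le x_n+f_{n-\ell_n}(j)\}} (1+m_n+x_n-S_k)^2 \e^{-(x_n+f_{n-\ell_n}(k)-S_k)}]$. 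After decomposing the distance $x_n+f_{n-\ell_n}(k)-S_k$ into unit intervals and carefully tracking the logarithmic terms in $m_n$, $m_{n-\ell_n}$, and $m_{n-\ell_n-k+1}$, this will yield $T_k \le C\e^{2K}\varepsilon(k,n)x_n\e^{-x_n}$ with $\sum_{k=0}^{n-\ell_n-1}\varepsilon(k,n) \to 0$.

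The main obstacle is the bookkeeping of exponents in the above chain, which must work uniformly in $k$ and, in particular, in the small-$k$ regime where $m_n+x_n-V(w)$ can be negative so that the naive Hu bound on $\Psi_k$ saturates. The critical point is the calibration of the exponent $1/2$ in the definition of $F_K$: squaring the $\xi$-contribution $\e^{K+(x_n+f_{n-\ell_n}(k)-V(w))/2}$ produces the factor $\e^{x_n+f_{n-\ell_n}(k)-V(w)}$, which is exactly what is needed to combine with the $\e^{V(w)}$ required by many-to-one and to cancel the $\e^{-x_n}$ coming from the target exponential, leaving only a summable remainder in $k$. This cancellation is the precise reason why the truncation $F_K$ is introduced with this half-exponent form, and turning it into sharp estimates under the close-to-optimal assumption~\eqref{eq:assumption_peeling_lemma} is the delicate part of the proof.
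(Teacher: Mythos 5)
Your decomposition of the off-diagonal term by the generation $k$ of the most recent common ancestor $w$ and the appeal to conditional independence of the subtrees is the right starting point, and matches the strategy of the paper's Lemma~\ref{lem:short}. However, the specific bound you use after that step is too lossy, and this is a genuine gap, not a bookkeeping issue. The paper does \emph{not} use the Cauchy--Schwarz inequality $\sum_{v_1\neq v_2}a_{v_1}a_{v_2}\le(\sum_v a_v)^2$; instead it applies the spinal decomposition (Theorem~\ref{th:spinal_decomposition}), so that one of the two particles is chosen as the spine $w_{n-\ell}$ (absorbed into the size-biased measure) and only a \emph{single} subtree contributes a sum over $\Omega_k$. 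As a result the quantity $\e^{\xi(w_k)}$ appears exactly once, and combining the truncation $\e^{\xi(w_k)}\le\e^{K+z_k/2}$ (with $z_k=x_n+f_{n-\ell}(k)-V(w_k)$) with the $\e^{-z_k}$ that comes from the subtree first moment leaves a genuine decaying factor $(1+z_k)\e^{-z_k/2}=\psi(z_k/2)$ at the split time, while the $\e^{-V(w_{n-\ell})}$ weight is paid at the endpoint $n-\ell$. This factor $\e^{-z_k/2}$ is precisely what makes the subsequent sum over $k$ and the integration against the ballot density of $z_k$ controllable.

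In your approach, squaring $\sum_v\Psi_k(V(v))$ produces $\e^{2\xi(w_k)}\le\e^{2K}\e^{z_k}$. After writing $\Psi_k$ correctly (note your displayed estimate $\Psi_k(y)\le C(1+(m_n+x_n-y)_+)\e^{-(m_n+x_n-y)}$ is off: a many-to-one plus ballot computation gives $\Psi_k(y)\lesssim(1+(x_n+f_{n-\ell}(k)-y)_+)\e^{y-x_n-f_{n-\ell}(k)}$, i.e.\ $(1+z_k)\e^{-z_k}$, not a factor $\e^{-m_n}=n^{3/2}$), the square of the subtree sum is $\lesssim\e^{2K}(1+z_k)^2\e^{-z_k}$, and after the many-to-one reweighting $\e^{-S_k}$ at generation $k$, the $\e^{-z_k}$ cancels completely: you are left with $T_k\lesssim\e^{2K}\e^{-x_n-f_{n-\ell}(k)}\E[(1+z_k)^2\mathds{1}_{\{\forall j\le k,\,S_j\le x_n+f_{n-\ell}(j)\}}]$, with \emph{no} exponential decay in $z_k$. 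Since the ballot density of $z_k$ scales like $x_n z_k/k^{3/2}$ up to $z_k\asymp\sqrt{k}$, this expectation grows like $x_n\sqrt{k}$, and $\sum_k\e^{-f_{n-\ell}(k)}x_n\sqrt{k}\asymp x_n\,n^{3/2}$, which is off by a polynomial factor of $n$. The structural reason is that with two subtrees branching at $k$, the change of measure acts at generation $k$ and eats the entire decay there; with a spine plus one subtree, the change of measure acts at generation $n-\ell$ and the decay at $k$ survives. This is why the paper's argument needs the spinal decomposition and cannot be replaced by the elementary second-moment bound you propose; your final displayed random-walk expectation, which retains an $\e^{-(x_n+f_{n-\ell_n}(k)-S_k)}$ factor, is not consistent with your own squared-$\xi$ bound.
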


Note that, by expanding~$\Lambda_{n,\ell,K}^2$, we have $\E[\Lambda_{n,\ell,K}^2]-\E[\Lambda_{n,\ell,K}] \ge 0$.
The following lemma gives a good approximation for the upper bound of $\E[\Lambda_{n,\ell,K}^2]-\E[\Lambda_{n,\ell,K}]$, which is helpful for the proof of Lemma~\ref{lem:equivalent_Lambda_Lambda2}.

\begin{lemma}\label{lem:short}
    Assume~\eqref{eq:assumption_supercriticality} and~\eqref{eq:assumption_boundary_case}.
    If $1 \le \ell = o(n)$, then there exists $C>0$ such that, for any $K>0$ and~$n$ large enough,
    \begin{align}
        0
        & \le \E[\Lambda_{n,\ell,K}^2] - \E[\Lambda_{n,\ell,K}] \nonumber \\
        & \le C\e^K\sum_{k=0}^{n-\ell-1}\E\left[\e^{-S_{n-\ell}}P_{n,\ell}(S_{n-\ell})\psi\left(\frac{x_n+f_{n-\ell}(k)-S_k}{2}\right)\mathds{1}_{\{\forall i \le n-\ell, S_i \le x_n+f_{n-\ell}(i)\}}\right], \label{eq:short_upper_bound}
    \end{align}
    where $P_{n,\ell}(y)\coloneq\P_y(M_\ell>m_n+x_n)$ and $\psi(y)\coloneq(1+2y_+)\e^{-y}$.
\end{lemma}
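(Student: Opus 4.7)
The non-negativity is immediate from $\Lambda_{n,\ell,K}^2 - \Lambda_{n,\ell,K} = \sum_{u \neq v,\, |u|=|v|=n-\ell} Y(u) Y(v) \ge 0$, where $Y(u) \coloneq \mathds{1}_{\{u \in \tilde{E}_K(x_n)\}} \mathds{1}_{\{V(u)+M_\ell^{(u)} > m_n + x_n\}}$. For the upper bound, my plan is to decompose pairs $(u,v)$ of distinct particles at generation $n-\ell$ by the generation $k\in\{0,\ldots,n-\ell-1\}$ of their most recent common ancestor $w = u_k = v_k$, so that the contribution at fixed $k$ is a sum over $w$ at level $k$, over distinct children $u',v'\in\Omega(w)$, and over descendants $u\succeq u'$ and $v\succeq v'$. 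For each fixed $k$, I apply the spinal decomposition (Theorem~\ref{th:spinal_decomposition}) to the outer sum on $u$ by picking $u = w_{n-\ell}$ as the spine, converting the $k$-th contribution into
\[
\hat{\E}\!\left[\, e^{-V(w_{n-\ell})}\, Y(w_{n-\ell})\, \sum_{v' \in \Omega_k} \sum_{\substack{v \succeq v' \\ |v|=n-\ell}} Y(v) \,\right].
\]
Under $\hat{\P}$, $(V(w_j))_{j\le n-\ell}$ has the law of the associated walk $(S_j)_{j\le n-\ell}$, and the length-$\ell$ subtree attached at $w_{n-\ell}$ is an independent standard branching random walk, so integrating out $M_\ell^{(w_{n-\ell})}$ replaces its indicator by $P_{n,\ell}(S_{n-\ell})$. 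The indicator $\mathds{1}_{\{w_{n-\ell}\in\tilde{E}_K(x_n)\}}$ retains, as an upper bound, the barrier constraint $\mathds{1}_{\{\forall i\le n-\ell,\ S_i \le x_n+f_{n-\ell}(i)\}}$ and the $\xi$-control at the MRCA level, $\xi(w_k) \le K + (x_n+f_{n-\ell}(k)-S_k)/2$.

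To control the lateral contribution I use that under $\hat{\P}$, conditionally on the spine and on the positions of the children of $w_k$, the subtrees rooted at the non-spine children $v'\in\Omega_k$ are i.i.d.\ standard branching random walks. Dropping all constraints inside each such subtree (upper bound) and averaging out $M_\ell^{(v)}$ by conditioning on $V(v)$ reduces the lateral sum to $\sum_{v'\in\Omega_k} g_k(V(v'))$ with $g_k(y) \le \E_y\!\left[\sum_{|v''|=n-\ell-k-1} P_{n,\ell}(V(v''))\right]$. Abbreviating $b_k = x_n+f_{n-\ell}(k)$, the key step is to establish a pointwise bound
\[
g_k(y) \le C\,(1+(S_k-y)_+)\,(1+(b_k-S_k)_+)\,e^{y-b_k}.
\]
Summing over $v'\in\Omega_k$ and invoking the $\xi$-constraint in the form
\[
\sum_{v'\in\Omega(w_k)} \bigl(1+(S_k-V(v'))_+\bigr) e^{V(v')-S_k} \le e^{\xi(w_k)} \le e^K e^{(b_k-S_k)/2}
\]
then yields $\sum_{v'\in\Omega_k} g_k(V(v')) \le C e^K \psi\!\left((b_k-S_k)/2\right)$, where the exponential factors collapse correctly because $e^{S_k-b_k+(b_k-S_k)/2} = e^{-(b_k-S_k)/2}$. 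Inserting this into the spine expectation and summing over $k$ produces~\eqref{eq:short_upper_bound}.

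The main obstacle is the sharp pointwise bound on $g_k(y)$. A naive use of $P_{n,\ell}(z) \le e^{z-m_n-x_n}$ would overshoot by a factor of order $(n-k)^{3/2}$, since it disregards the ballot-type decay of the random walk required to rise from $y$, which is forced to lie near the MRCA barrier, up to the typical target $m_n+x_n$ at generation~$n$. The sharp estimate requires using Hu's bound~\eqref{eq:hu_upper_bound} on the tail of $M_\ell$ with its full polynomial prefactor, combined with precise random-walk estimates under a barrier (in the spirit of Lemma~\ref{lem:mallein_random_walk_estimate}) so that the $\ell^{-3/2}$ gain from Hu and the $(n-k)^{-3/2}$ gain from the ballot decay combine to produce the advertised polynomial factor $(1+(S_k-y)_+)(1+(b_k-S_k)_+)$ together with the exponential $e^{y-b_k}$; it is this cancellation that ultimately reconstructs the function $\psi$ after summing over siblings.
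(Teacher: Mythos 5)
Your overall architecture matches the paper's proof exactly: decompose the off-diagonal pairs by the generation $k$ of the most recent common ancestor, pass to the spinal decomposition with $u=w_{n-\ell}$ as the spine, treat the lateral subtrees rooted at $\Omega_k$ as i.i.d.\ copies, reduce the lateral sum to a per-sibling quantity $g_k(y)$, invoke the $\xi$-constraint to sum over $\Omega_k$ and reconstruct $\psi$, then sum over $k$. The constant $C\e^K$ in front and the identification of $\psi\bigl((b_k-S_k)/2\bigr)$ via $\e^{\xi(w_k)}\le\e^{K+(b_k-S_k)/2}$ are also as in the paper.

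There is, however, a genuine flaw in the intermediate step. You define
\[
g_k(y)\le\E_y\Bigl[\sum_{|v''|=n-\ell-k-1}P_{n,\ell}(V(v''))\Bigr],
\]
explicitly ``dropping all constraints inside each such subtree.'' This unconstrained quantity does \emph{not} satisfy the claimed pointwise bound $g_k(y)\le C(1+(S_k-y)_+)(1+(b_k-S_k)_+)\e^{y-b_k}$. Indeed, writing $k'=n-\ell-k-1$ and applying the many-to-one lemma, the right-hand side is $\E[\e^{-S_{k'}}P_{n,\ell}(S_{k'}+y)]$ with no ballot constraint; using Hu's bound~\eqref{eq:hu_upper_bound} on $P_{n,\ell}$ cancels the exponential in $S_{k'}$ and leaves $\e^{y-x_n}\,n^{3/2}\ell^{-3/2}\,\E[(1+(\cdots-S_{k'})_+)]$, and the centered walk gives $\E[(1+(\cdots-S_{k'})_+)]$ of order $\sqrt{k'}$. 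This is off from the target $\e^{y-x_n}(n-\ell)^{3/2}/(k'+1)^{3/2}$ by roughly a factor $(k')^2\ell^{-3/2}$, which is unbounded for $k$ small (so $k'$ comparable to $n$). The missing ingredient is exactly what your later paragraph invokes but your definition of $g_k$ omits: you must \emph{retain} the barrier constraint $v\in E(x_n)$ in the lateral subtree (drop only the $F_K$ control), so that Mallein's estimate, Lemma~\ref{lem:mallein_random_walk_estimate}, produces the $(k')^{-3/2}$ ballot decay. That is what the paper does --- it bounds $\mathds{1}_{\{v\in\tilde E_K(x_n)\}}\le\mathds{1}_{\{v\in E(x_n)\}}$ rather than by $1$ --- and only then, after decomposing on the endpoint $S_{k'}$ and using~\eqref{eq:hu_upper_bound} together with the uniform bound $\sum_{y\ge0}(1+y)\e^y\P(M_\ell>y-1)<\infty$ from Proposition~\ref{prop:finite_integral}, does the stated bound on $g_k(y)$ emerge. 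As written, your proposal's ``drop all constraints'' contradicts its own appeal to ``random-walk estimates under a barrier,'' and the intermediate claim it rests on is false; fixing the definition of $g_k$ to keep the barrier brings the argument into line with the paper's.
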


\begin{proof}
    For $u,v\in\T$, we denote by $u\wedge v$ the most recent common ancestor of~$u$ and~$v$, \ie $u\wedge v \coloneq u_k$, where $k = \max\{j \ge 0 : u_j=v_j\}$.
    Recall that $\tilde{E}_{K}(x)$ is defined in~\eqref{eq:def_E_tilde}.
    We have
    \begin{equation*}
        \Lambda_{n,\ell,K}^2 = \Lambda_{n,\ell,K} + \sum_{k=0}^{n-\ell-1}\sum_{|u|=|v|=n-\ell} \mathds{1}_{\{|u \wedge v| = k\}} \mathds{1}_{\{u,v \in \tilde{E}_{K}(x_n)\}} \mathds{1}_{\{V(v)+M_\ell^{(v)}>m_n+x_n\}} \mathds{1}_{\{V(u)+M_\ell^{(u)}>m_n+x_n\}}.
    \end{equation*}
    By the branching property, conditionally on $\hF_{n-\ell}$, the random variables $M_\ell^{(u)}$, for $|u|=n-\ell$, are \iid copies of $M_\ell$.
    Therefore, recalling that~$\bar{\E}$ and~$\hat{\E}$ are defined in Section~\ref{sct:spinal_decomposition},
    \begin{align*}
        & \E \left[ \sum_{|u|=|v|=n-\ell}\mathds{1}_{\{|u\wedge v|=k\}}\mathds{1}_{\{u,v\in \tilde{E}_{K}(x_n)\}}\mathds{1}_{\{V(v)+M_\ell^{(v)}>m_n+x_n\}}\mathds{1}_{\{V(u)+M_\ell^{(u)}>m_n+x_n\}} \right] \\
        & \quad = \E \left[ \sum_{|u|=|v|=n-\ell}\mathds{1}_{\{|u\wedge v|=k\}}\mathds{1}_{\{u,v\in \tilde{E}_{K}(x_n)\}}P_{n,\ell}(V(u))P_{n,\ell}(V(v)) \right] \\
        & \quad = \bar{\E} \left[ \sum_{|u|=|v|=n-\ell}\frac{\e^{V(u)}}{W_{n-\ell}}\e^{-V(u)}\mathds{1}_{\{|u\wedge v|=k\}}\mathds{1}_{\{u,v\in \tilde{E}_{K}(x_n)\}}P_{n,\ell}(V(u))P_{n,\ell}(V(v)) \right] \\
        & \quad = \hat{\E} \left[ \e^{-V(w_{n-\ell})}P_{n,\ell}(V(w_{n-\ell}))\mathds{1}_{\{w_{n-\ell}\in \tilde{E}_K(x_n)\}}\sum_{|v|=n-\ell}\mathds{1}_{\{|w_{n-\ell}\wedge v|=k\}}\mathds{1}_{\{v\in \tilde{E}_{K}(x_n)\}}P_{n,\ell}(V(v)) \right],
    \end{align*}
    where the last equality follows from spinal decomposition theorem (Theorem~\ref{th:spinal_decomposition}).
    By definition of~$\Omega_k$,
    \begin{equation*}
        \sum_{|v|=n-\ell}\mathds{1}_{\{|w_{n-\ell}\wedge v|=k\}}\mathds{1}_{\{v\in \tilde{E}_{K}(x_n)\}}P_{n,\ell}(V(v))= \sum_{u\in\Omega_k}\sum_{|v|=n-\ell,v\succeq u} \mathds{1}_{\{v\in\tilde{E}_K(x_n)\}}P_{n,\ell}(V(v)).
    \end{equation*}
    Let $\hG_{n-\ell}\coloneq\sigma(w_j,V(w_j),(u,V(u))_{u\in\Omega_{j-1}}:1 \le j \le n-\ell)$.
    Note that, conditionally on~$\hG_{n-\ell}$, for any $u \in \Omega_k$, the family $(V(v)-V(u) : |v|=n-\ell, v \succeq u)$ under~$\hat{\P}$ has same law as $(V(u) : |u| = n-\ell-k-1)$ under~$\P$.
    Therefore,
    \begin{equation}\label{eq:short_branching_property}
        \hat{\E} \left[ \sum_{|v|=n-\ell} \mathds{1}_{\{|w_{n-\ell} \wedge v|=k\}} \mathds{1}_{\{v \in \tilde{E}_{K}(x_n)\}} P_{n,\ell}(V(v)) \middle| \hG_{n-\ell} \right] = \sum_{u\in\Omega_k} \E_{V(u)} \left[ \sum_{|v|=n-\ell-k-1} \mathds{1}_{\{v\in\tilde{E}_K(x_n)\}} P_{n,\ell}(V(v)) \right].
    \end{equation}
    For $k'=n-\ell-k-1$ and $a \in \R$,
    \begin{align*}
        \E_a \left[ \sum_{|v|=k'} \mathds{1}_{\{v\in\tilde{E}_K(x_n)\}} P_{n,\ell}(V(v)) \right]
        & \le \E_a \left[ \sum_{|v|=k'} \mathds{1}_{\{v \in E_K(x_n)\}} P_{n,\ell}(V(v)) \right] \\
        & = \E \left[ \e^{-S_{k'}} \mathds{1}_{\{\forall i \le k', S_i \le f_{n-\ell}(k+1+i)+x_n-a\}} P_{n,\ell}(S_{k'}+a) \right],
    \end{align*}
    by the many-to-one formula.
    By decomposing the event $\{S_{k'} \le f_{n-\ell}(k+1+k') + x_n - a\}$, we obtain
    \begin{multline*}
        \E \left[ \e^{-S_{k'}} \mathds{1}_{\{\forall i \le k', S_i \le f_{n-\ell}(k+1+i)+x_n-a\}} P_{n,\ell}(S_{k'}+a) \right] \\
        \le \sum_{y \ge 0} \e^{-m_{n-\ell}-x_n+a+y+1} \P \left( \substack{\forall i \le k', S_i \le f_{n-\ell}(k+1+i)+x_n-a, \\ S_{k'} - m_{n-\ell} - x_n + a \in [-y-1,-y]} \right) \P(M_\ell > m_n - m_{n-\ell} + y).
    \end{multline*}
    For convenience, we define, for $0 \le j \le n$ and $x,y,z \in \R$,\begin{equation}\label{eq:def_chi}
        \chi_{n,j}(x,y,z) \coloneq \P(\forall i < j, S_i \le f_n(i)+x, S_j-f_n(j) \in [x-y-z,x-y]).
    \end{equation}
    We write $\chi_n = \chi_{n,n}$ for short.
    Note that
    \begin{equation*}
        f_{n-\ell}(k+1+i) = m_{n-\ell} - m_{k'} + m_{k'} - m_{n-\ell-k-i} = f_{n-\ell}(k+2) + f_{k'}(i),
    \end{equation*}
    since $k' = n-\ell-k-1$.
    Thus, taking~$n$ large enough so that $m_n - m_{n-\ell} > - 1$, we have
    \begin{multline*}
        \E \left[ \e^{-S_{k'}} \mathds{1}_{\{\forall i \le k', S_i \le f_{n-\ell}(k+1+i)+x_n-a\}} P_{n,\ell}(S_{k'}+a) \right] \\
        \le \sum_{y \ge 0} \e^{-m_{n-\ell}-x_n+a+y+1} \chi_{k'}(x_n+f_{n-\ell}(k+2)-a,y,1) \P(M_\ell > y-1).
    \end{multline*}
    Applying Lemma~\ref{lem:mallein_random_walk_estimate} and then Proposition~\ref{prop:finite_integral}, we bound the above sum by
    \begin{multline*}
        C (n-\ell)^{3/2} \e^{-x_n+a} \sum_{y \ge 0} \e^{y+1} \frac{(1 + (x_n+f_{n-\ell}(k+2)-a)_+)(1+y)}{(n-\ell-k)^{3/2}} \P(M_\ell > y-1) \\
        \le C \e^{-x_n+a-f_{n-\ell}(k)} (1 + (x_n+f_{n-\ell}(k+2)-a)_+).
    \end{multline*}
    Tracing back our inequalities, we obtain that~\eqref{eq:short_branching_property} is bounded by
    \begin{multline*}
        C \e^{-x_n-f_{n-\ell}(k)} \sum_{u \in \Omega_k} \e^{V(u)} (1+(x_n+f_{n-\ell}(k+1)-V(u))_+) \\
        \le C \e^{-x_n-f_{n-\ell}(k)+V(w_k)} (1+(x_n+f_{n-\ell}(k)-V(w_k))_+) \e^{\xi(w_k)},
    \end{multline*}
    where we have used the basic inequality $1+(a+b)_+ \le (1+a_+)(1+b_+)$.
    Now, by definition~\eqref{eq:def_F}, on the event $\{w_{n-\ell}\in F_K(x_n)\}$, we have
    \begin{equation*}
        \e^{\xi(w_k)} \le \e^{K+(x_n+f_{n-\ell}(k)-V(w_k))/2}.
    \end{equation*}
    We deduce that
    \begin{equation*}
        \E[\Lambda_{n,\ell,K}^2] - \E[\Lambda_{n,\ell,K}] \le C \e^K \sum_{k=0}^{n-\ell-1} \hat{\E} \left[ \e^{-V(w_{n-\ell})} P_{n,\ell}(V(w_{n-\ell}))\mathds{1}_{\{w_{n-\ell}\in \tilde{E}_K(x_n)\}} \psi \left( \frac{x_n+f_{n-\ell}(k)-V(w_k)}{2} \right) \right].
    \end{equation*}
    Finally, the upper bound~\eqref{eq:short_upper_bound} follows from the definition of~$\tilde{E}_K(x_n)$ and the spinal decomposition theorem (Theorem~\ref{th:spinal_decomposition}).
\end{proof}

Now we prove Lemma~\ref{lem:equivalent_Lambda_Lambda2}.

\begin{proof}[Proof of Lemma~\ref{lem:equivalent_Lambda_Lambda2}]
    To simplify notations, let us abbreviate $\ell = \ell_n$.
    By Lemma~\ref{lem:short}, for~$K > 0$ and~$n$ large enough, we have
    \begin{equation*}
        \E[\Lambda_{n,\ell,K}^2] - \E[\Lambda_{n,\ell,K}] \le C\e^K\sum_{k=0}^{n-\ell-1}\E\left[\e^{-S_{n-\ell}}P_{n,\ell}(S_{n-\ell})\psi\left(\frac{x_n+f_{n-\ell}(k)-S_k}{2}\right)\mathds{1}_{\{\forall i \le n-\ell, S_i \le x_n+f_{n-\ell}(i)\}}\right],
    \end{equation*}
    where $P_{n,\ell}(y)\coloneq\P_y(M_\ell>m_n+x_n)$ and $\psi(y)\coloneq(1+2y_+)\e^{-y}$.
    By decomposing the event $\{S_{n-\ell} \le x_n+f_{n-\ell}(n-\ell)\}$, we obtain
    \begin{multline*}
        \E\left[\e^{-S_{n-\ell}}P_{n,\ell}(S_{n-\ell})\psi\left(\frac{x_n+f_{n-\ell}(k)-S_k}{2}\right)\mathds{1}_{\{\forall i \le n-\ell, S_i \le x_n+f_{n-\ell}(i)\}}\right] \\
        \le \sum_{y \ge 0} \e^{-x_n-m_{n-\ell}+y+1} \E\left[\psi\left(\frac{x_n+f_{n-\ell}(k)-S_k}{2}\right)\mathds{1}_{\left\{\substack{\forall i < n-\ell, S_i \le x_n+f_{n-\ell}(i), \\ S_{n-\ell}-x_n-m_{n-\ell}\in[-y-1,-y]}\right\}} P_{n,\ell}(x_n+m_{n-\ell}-y)\right].
    \end{multline*}
    Then, by decomposing the event $\{S_k \le x_n+f_{n-\ell}(k)\}$ and taking~$n$ large enough so that $P_{n,\ell}(x_n+m_{n-\ell}-y) \le \P(M_\ell > y-1)$, we obtain
    \begin{multline*}
        \E\left[\psi\left(\frac{x_n+f_{n-\ell}(k)-S_k}{2}\right) \mathds{1}_{\left\{\substack{\forall i < n-\ell, S_i \le x_n+f_{n-\ell}(i), \\ S_{n-\ell}-x_n-m_{n-\ell}\in[-y-1,-y]}\right\}} P_{n,\ell}(x_n+m_{n-\ell}-y)\right] \\
        \le \sum_{z \ge 0} (2+z) \e^{-z/2} \P\left(\substack{\forall i < n-\ell, S_i \le x_n+f_{n-\ell}(i), \\ S_k-x_n-f_{n-\ell}(k) \in [-z-1,-z], \\ S_{n-\ell}-x_n-m_{n-\ell} \in [-y-1,-y]}\right) \P(M_\ell > y-1).
    \end{multline*}
    By the Markov property at time~$k$,
    \begin{align*}
        \P\left(\substack{\forall i < n-\ell, S_i \le x_n+f_{n-\ell}(i), \\ S_k-x_n-f_{n-\ell}(k) \in [-z-1,-z], \\ S_{n-\ell}-x_n-m_{n-\ell} \in [-y-1,-y]}\right)
        & \le \P\left(\substack{\forall i < k, S_i \le x_n+f_{n-\ell}(i), \\ S_k-x_n-f_{n-\ell}(k) \in [-z-1,-z]}\right) \P\left(\substack{\forall i < n-\ell-k, S_i \le f_{n-\ell}(i+k)-f_{n-\ell}(k)+z+1, \\ S_{n-\ell-k}-m_{n-\ell}+f_{n-\ell}(k)-z \in [-y-1,-y+1]}\right) \\
        & \le \chi_{n-\ell,k}(x_n,z,1) \chi_{n-\ell-k}(z+1,y-1,2),
    \end{align*}
    where~$\chi_{n-\ell,k}$ and $\chi_{n-\ell-k} = \chi_{n-\ell-k,n-\ell-k}$ are defined in~\eqref{eq:def_chi}, since
    \begin{equation*}
        f_{n-\ell}(i+k) - f_{n-\ell}(k) = m_{n-\ell} - m_{n-\ell-i-k+1} - m_{n-\ell} + m_{n-\ell-k+1} \le f_{n-\ell-k}(i).
    \end{equation*}
    Tracing back our inequalities, this yields 
    \begin{equation}\label{eq:equivalent_Lambda_Lambda2}
        \E[\Lambda_{n,\ell,K}^2] - \E[\Lambda_{n,\ell,K}] \le C \e^{K-x_n} (n-\ell)^{3/2} \sum_{k=0}^{n-\ell-1} \sum_{y,z \ge 0} T_{n,k}(y,z),
    \end{equation}
    where
    \begin{equation*}
        T_{n,k}(y,z) \coloneq (1+z) \e^{y-z/2} \chi_{n-\ell,k}(x_n,z,1) \chi_{n-\ell-k}(z+1,y-1,2) \P(M_\ell > y-1).
    \end{equation*}
    
    Consider $\alpha\in(0,1)$ that we will fix later on.
    In the remainder of the proof, we treat separately the cases $k\in[0,\ell^\alpha]$, $k\in[\ell^\alpha,n-\ell-\ell^\alpha]$, and $k\in(n-\ell-\ell^\alpha,n-\ell]$.

    First, assume $k\in[\ell^{\alpha},n-\ell-\ell^\alpha]$.
    Bounding~$\chi_{n-\ell,k}(x_n,z,1)$ and~$\chi_{n-\ell-k}(z+1,y-1,2)$ via Lemma~\ref{lem:mallein_random_walk_estimate}, we obtain
    \begin{align*}
        \sum_{y,z \ge 0} T_{n,k}(y,z)
        & \le \frac{C x_n}{k^{3/2}(n-\ell-k)^{3/2}} \sum_{z \ge 0} (1+z)^3 \e^{-z/2} \sum_{y \ge 0} (1+y) \e^y \P(M_\ell > y-1) \\
        & \le \frac{Cx_n}{k^{3/2}(n-\ell-k)^{3/2}},
    \end{align*}
    where the last inequality follows from Proposition~\ref{prop:finite_integral} and Lemma~\ref{lem:negligible_integral}.
    Hence,
    \begin{align}
        (n-\ell)^{3/2} \sum_{\ell^\alpha \le k \le n-\ell-\ell^\alpha} \sum_{y,z \ge 0} T_{n,k}(y,z)
        & \le Cx_n \sum_{\ell^\alpha \le k \le n-\ell-\ell^\alpha }\frac{(n-\ell)^{3/2}}{k^{3/2}(n-\ell-k)^{3/2}} \nonumber \\
        & \le Cx_n \left( \sum_{\ell^\alpha \le k \le (n-\ell)/2} k^{-3/2}+\sum_{(n-\ell)/2 \le k \le n-\ell-\ell^\alpha} (n-\ell-k)^{-3/2} \right) \nonumber \\
        & \le Cx_n \ell^{-\alpha/2}. \label{eq:equivalent_Lambda_Lambda2_1}
    \end{align}

    Now, assume $k\in[n-\ell-\ell^{\alpha},n-\ell]$.
    Bounding~$\chi_{n-\ell,k}(x_n,z,1)$ via Lemma~\ref{lem:mallein_random_walk_estimate}, we obtain
    \begin{equation*}
        \sum_{y,z \ge 0} T_{n,k}(y,z) \le \frac{C x_n}{k^{3/2}} \sum_{y,z \ge 0} (1+z)^2 \e^{y-z/2} \chi_{n-\ell-k}(z+1,y-1,2) \P(M_\ell > y-1).
    \end{equation*}
    Consider $\beta > 0$ that will be fixed later on.
    Bounding $\chi_{n-\ell-k}(z+1,y-1,2) \le 1$ and then applying Lemma~\ref{lem:negligible_integral}, we obtain
    \begin{equation*}
        \sum_{y \ge 0} \sum_{z > \ell^\beta} T_{n,k}(y,z) \le \frac{C x_n}{k^{3/2}} \sum_{z > \ell^\beta} (1+z)^2 \e^{-z/2} \sum_{y \ge 0} \e^y \P(M_\ell > y-1) \le \frac{C x_n}{k^{3/2}} \e^{-\ell^\beta/3}.
    \end{equation*}
    Bounding $\chi_{n-\ell-k}(z+1,y-1,2) \le 1$ and then applying the upper bound~\eqref{eq:hu_upper_bound}, we also obtain
    \begin{align*}
        \sum_{0 \le y \le 2\ell^\beta} \sum_{z \le \ell^\beta} T_{n,k}(y,z)
        & \le \frac{C x_n}{k^{3/2}} \sum_{z \le \ell^\beta} (1+z)^2 \e^{-z/2} \sum_{0 \le y \le 2\ell^\beta} \e^y \P(M_\ell > y-1) \\
        & \le \frac{C x_n}{k^{3/2}} \sum_{0 \le y \le 2\ell^\beta} \frac{y-m_\ell}{\ell^{3/2}} \\
        & \le \frac{C x_n (\ell^{2\beta}+\ell^\beta\log \ell)}{k^{3/2} \ell^{3/2}}.
    \end{align*}
    When $z \le \ell^\beta$ and $y > 2\ell^\beta$, we have $z-y < -\ell^\beta$, which implies that,
    \begin{equation*}
        \chi_{n-\ell-k}(z+1,y-1,2) \le \P(S_{n-\ell-k} \le m_{n-\ell-k}+z-y+2) \le \P(S_{n-\ell-k} \le -\ell^\beta+2),
    \end{equation*}
    since $m_{n-\ell-k} \le 0$.
    Thus,
    \begin{align*}
        \sum_{y > 2\ell^\beta} \sum_{z \le \ell^\beta} T_{n,k}(y,z)
        & \le \frac{C x_n}{k^{3/2}} \P(-S_{n-\ell-k} \ge \ell^\beta-2) \sum_{z \le \ell^\beta} (1+z)^2 \e^{-z/2} \sum_{y > 2\ell^\beta} \e^y \P(M_\ell > y-1) \\
        &\le \frac{C x_n}{k^{3/2}} \frac{\E[S_{n-\ell-k}^2]}{\ell^{2\beta}} \sum_{y > 2\ell^\beta} \e^y \P(M_\ell > y-1) \\
        & \le \frac{C x_n}{k^{3/2} \ell^{2\beta-\alpha}},
    \end{align*}
    where the final inequality follows from Lemma~\ref{lem:negligible_integral}.
    Therefore, for~$\ell$ large enough, using that $\ell = o(n)$,
    \begin{equation}\label{eq:equivalent_Lambda_Lambda2_2}
        (n-\ell)^{3/2} \sum_{n-\ell-\ell^\alpha \le k \le n-\ell} \sum_{y,z \ge 0} T_{n,k}(y,z) \le Cx_n (\ell^\alpha \e^{-\ell^\beta/3} + \ell^{-3/2+2\beta+\alpha} + \ell^{2\alpha-2\beta}).
    \end{equation}
    Therefore, we will choose~$\alpha$ and~$\beta$ such that $3/2-2\beta-\alpha>0$ and $\beta > \alpha$.

    Finally, assume $k\in[0,\ell^\alpha]$.
    Bounding~$\chi_{n-\ell,k}(x_n,z,1)$ and~$\chi_{n-\ell-k}(z+1,y-1,2)$ via Lemma~\ref{lem:mallein_random_walk_estimate}, we obtain
    \begin{align*}
        \sum_{y,z \ge 0} T_{n,k}(y,z)
        & \le \frac{C (x_n \wedge \sqrt{k})}{(1+k)^{3/2}(n-\ell-k)^{3/2}} \sum_{z \ge 0} (1+z)^3 \e^{-z/2} \sum_{y \ge 0} (1+y) \e^y \P(M_\ell > y-1) \\
        & \le \frac{C (x_n \wedge \sqrt{k})}{(1+k)^{3/2}(n-\ell-k)^{3/2}},
    \end{align*}
    where the last inequality follows from Proposition~\ref{prop:finite_integral} and Lemma~\ref{lem:negligible_integral}.
    Since $\ell = o(n)$, this implies that
    \begin{align}
        (n-\ell)^{3/2} \sum_{0 \le k \le \ell^\alpha} \sum_{y,z \ge 0} T_{n,k}(y,z)
        & \le C \sum_{0 \le k \le \ell^\alpha} \frac{x_n \wedge \sqrt{k}}{(1+k)^{3/2}(n-\ell-k)^{3/2}} \nonumber \\
        & \le C \sum_{0 \le k \le x_n^2} \frac{1}{1+k} + C \sum_{x_n^2 < k \le \ell^\alpha}\frac{x_n}{k^{3/2}} \nonumber \\
        & \le C (1+\log x_n). \label{eq:equivalent_Lambda_Lambda2_3}
    \end{align}

    Inserting the bounds~\eqref{eq:equivalent_Lambda_Lambda2_1}, \eqref{eq:equivalent_Lambda_Lambda2_2}, and~\eqref{eq:equivalent_Lambda_Lambda2_3} into~\eqref{eq:equivalent_Lambda_Lambda2}, we obtain
    \begin{align*}
        \E[\Lambda_{n,\ell,K}^2] - \E[\Lambda_{n,\ell,K}]
        & \le C \e^{K-x_n} x_n (\ell^{-\alpha/2}+\ell^\alpha \e^{-\ell^\beta/3}+\ell^{-3/2+2\beta+\alpha}+\ell^{2\alpha-2\beta}+x_n^{-1}(1+\log x_n)) \\
        & \le C \e^K x_n \e^{-x_n}(\ell^{-3/16}+x_n^{-1}(1+\log x_n)),
    \end{align*}
    where we set the optimal choice $\alpha = 3/8$ and $\beta = 15/32$.
    By Lemmas~\ref{lem:lower_bound_Lambda} and~\ref{lem:peeling_lemma}, this completes the proof.
\end{proof}

\begin{proof}[Proof of Proposition~\ref{prop:expectation_equivalent}]
Recall that~$G_{n,\ell}$ is defined in~\eqref{eq:def_G}.
Note that 
\begin{equation*}
\P(M_n > m_n + x_n) \le \E[\Lambda_{n,\ell_n}] + \P(G_{n,\ell_n}),
\end{equation*}
which, by Proposition~\ref{prop:upper_bound_G} and Lemma~\ref{lem:lower_bound_Lambda}, implies that 
\begin{equation}\label{eq:expectation_equivalent_1}
\limsup_{n\to\infty}\frac{\P(M_n>m_n+x_n)}{\E[\Lambda_{n,\ell_n}]} \le 1.
\end{equation}
Besides, by Cauchy--Schwarz inequality and Lemma~\ref{lem:equivalent_Lambda_Lambda2}, we have, for $K > 0$ large enough,
\begin{equation}\label{eq:expectation_equivalent_2}
\liminf_{n\to\infty}\frac{\P(M_n > m_n + x_n)}{\E[\Lambda_{n,\ell_n}]} \ge \liminf_{n\to\infty}\frac{1}{\E[\Lambda_{n,\ell_n}]}\frac{\E[\Lambda_{n,\ell_n,K}]^2}{\E[\Lambda_{n,\ell_n,K}^2]} \ge \liminf_{n\to\infty}\frac{\E[\Lambda_{n,\ell_n,K}]}{\E[\Lambda_{n,\ell_n}]} \ge 1.
\end{equation}
where the last equality follows from Lemma~\ref{lem:peeling_lemma}.
Combining~\eqref{eq:expectation_equivalent_1} and~\eqref{eq:expectation_equivalent_2}, we complete the proof.
\end{proof}

Similarly to Corollary~\ref{cor:upper_bound_G}, the following corollary states that we can make first $n \to \infty$ and then $\ell \to \infty$.
It is a straightforward consequence of Proposition~\ref{prop:expectation_equivalent} and Lemma~\ref{lem:from_sequence_to_large_constant}.
Another way to prove it is to directly adapt the arguments from the proof of the proposition.

\begin{corollary}\label{cor:expectation_equivalent}
    Assume~\eqref{eq:assumption_supercriticality}, \eqref{eq:assumption_boundary_case}, \eqref{eq:assumption_gaussianity}, and~\eqref{eq:assumption_peeling_lemma}.
    Then,
    \begin{equation*}
        \limsup_{\ell \to \infty} \limsup_{n \to \infty} \left| \frac{\P(M_n > m_n+x_n)}{\E[\Lambda_{n,\ell}]} - 1 \right| = 0.
    \end{equation*}
\end{corollary}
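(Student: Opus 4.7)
The plan is to prove Corollary~\ref{cor:expectation_equivalent} by contradiction, using Proposition~\ref{prop:expectation_equivalent} as a black box. This is essentially an instance of the generic principle recorded in Lemma~\ref{lem:from_sequence_to_large_constant}: any asymptotic statement that holds for every sufficiently slowly diverging sequence $(\ell_n)$ upgrades to a double-$\limsup$ with $\ell$ ranging over constants.

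Suppose for contradiction that there exists $\delta > 0$ such that, for every $L \ge 1$, one can find $\ell \ge L$ with
\[
\limsup_{n\to\infty} \left| \frac{\P(M_n > m_n+x_n)}{\E[\Lambda_{n,\ell}]} - 1 \right| > \delta.
\]
This yields a strictly increasing sequence $\ell_k \to \infty$ on which the $\limsup$ above exceeds $\delta$. For each $k$, by definition of $\limsup$, there is some $n_k$ with $n_k > n_{k-1}$ (setting $n_0 = 0$) and $n_k > k\,\ell_k$, satisfying
\[
\left| \frac{\P(M_{n_k} > m_{n_k}+x_{n_k})}{\E[\Lambda_{n_k,\ell_k}]} - 1 \right| > \delta.
\]

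Now I would construct a piecewise constant sequence $(\tilde{\ell}_n)_{n \ge 0}$ by setting $\tilde{\ell}_n = \ell_k$ for $n_k \le n < n_{k+1}$, with $\tilde{\ell}_n = 1$ for $n < n_1$. Since $\ell_k \to \infty$, we have $\tilde{\ell}_n \to \infty$. Moreover, for $n \in [n_k, n_{k+1})$, the inequality $\tilde{\ell}_n/n \le \ell_k/n_k < 1/k$ forces $\tilde{\ell}_n = o(n)$. Proposition~\ref{prop:expectation_equivalent} then applies to $(\tilde{\ell}_n)$ and gives
\[
\lim_{n \to \infty} \frac{\P(M_n > m_n+x_n)}{\E[\Lambda_{n,\tilde{\ell}_n}]} = 1.
\]
Evaluating along the subsequence $n_k$, where $\tilde{\ell}_{n_k} = \ell_k$, contradicts the choice of $(\ell_k, n_k)$.

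The only delicate step is the diagonal construction: we need $\tilde{\ell}_n \to \infty$, $\tilde{\ell}_n = o(n)$, and $\tilde{\ell}_{n_k} = \ell_k$ on the bad subsequence simultaneously. The growth condition $n_k > k\,\ell_k$ takes care of the $o(n)$ constraint, strict monotonicity of $(n_k)$ permits the piecewise definition, and $\ell_k \to \infty$ forces $\tilde{\ell}_n \to \infty$. No genuinely new analytic content arises beyond Proposition~\ref{prop:expectation_equivalent} itself, which is the reason the authors call the corollary a straightforward consequence.
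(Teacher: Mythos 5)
Your argument is correct and matches the paper's intended route: the paper's own proof of the corollary simply cites Proposition~\ref{prop:expectation_equivalent} together with Lemma~\ref{lem:from_sequence_to_large_constant}, and your diagonal construction is exactly an unpacking of the latter lemma in this special case (with $\gamma_n$ taken to be any sequence diverging to infinity with $\gamma_n = o(n)$). The only cosmetic difference is that you re-derive the extraction argument by hand rather than invoking the lemma as a black box, which is harmless.
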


\section{Asymptotics of \texorpdfstring{$\E[\Lambda_{n,\ell}]$}{E[Lambda(n,ell)]}}\label{sct:asymptotics_for_expectation}

Recall that
\begin{equation*}
    \Lambda_{n,\ell} \coloneq \sum_{|u|=n-\ell} \mathds{1}_{\{\forall i \le n-\ell, V(u_i) \le f_{n-\ell}(i)+x_n\}} \mathds{1}_{\{V(u)+M_\ell^{(u)}>m_n+x_n\}}.
\end{equation*}
In this section, we provide precise upper and lower bounds for $\E[\Lambda_{n,\ell}]$, asymptotically as $n \to \infty$ and $\ell \to \infty$.
To this end, we use the branching property and the many-to-one formula to rewrite
\begin{align}
    \E[\Lambda_{n,\ell}]
    & = \E \left[ \sum_{|u|=n-\ell}\mathds{1}_{\{\forall j \le n-\ell, V(u_j) \le f_{n-\ell}(j) + x_n\}} \P_{V(u)}(M_\ell > m_n + x_n) \right] \nonumber \\
    & = \E \left[ \e^{-S_{n-\ell}} \mathds{1}_{\{\forall j \le n-\ell, S_j \le f_{n-\ell}(j) + x_n\}} \P_{S_{n-\ell}}(M_\ell > m_n + x_n) \right]. \label{eq:decomposition_Lambda_non_lattice}
\end{align}
Next, in order to apply Lemmas~\ref{lem:doney} and~\ref{lem:pain}, we need to replace the barrier $f_{n-\ell}(j) + x_n$ with a function piecewise constant in $j \in [0,n-\ell]$.

For the upper bound, stated in Proposition~\ref{prop:upper_bound} below, we choose the ``worst'' possible constant barrier, that is~$x_n$.
The entropic repulsion phenomenon discussed in Remark~\ref{rem:entropic_repulsion} below implies that, with this new barrier, $S_{n-\ell}$ is typically at position $x_n - O(\sqrt{\ell})$, whereas with the original barrier it is typically at position $x_n - (3/2)\log(n-\ell) - O(\sqrt{\ell})$.
This gap leads us to consider $\ell = \ell_n$ such that $\log(n-\ell) = o(\sqrt{\ell})$ in order to obtain the correct asymptotic via Corollary~\ref{cor:simple_barrier_estimate}.

For the lower bound, stated in Proposition~\ref{prop:lower_bound} below, we would also like to choose the ``worst'' possible constant barrier, that is here $x_n - (3/2) \log(n-\ell)$, and apply Corollary~\ref{cor:simple_barrier_estimate} again.
However, this barrier is too far below the original barrier $f_{n-\ell}(j) + x_n \approx x_n$ for small values of~$j$, which is a problem if $\log(n-\ell) \neq o(x_n)$.
Therefore, we choose a two-level barrier instead, equal to $x_n - 1$ up to an intermediate time and then $x_n - (3/2)\log(n-\ell)$.
With such a choice, Corollary~\ref{cor:double_barrier_estimate} provides the correct asymptotic.

Since we are not able yet to show that constants~$C_{\mathrm{NL}}$ and~$C_{\mathrm{L}}$ are well defined, we state the upper and lower bounds in terms of
\begin{equation}\label{eq:C_limsup}
    \bar{C}_{\mathrm{NL}} \coloneq \frac{1}{\sigma^3} \sqrt{\frac{2}{\pi}} \limsup_{\ell\to\infty} \int_0^\infty y \e^{y} \P(M_\ell>y) \d{y} \quad \text{and} \quad \bar{C}_{\mathrm{L}} \coloneq \frac{h}{\sigma^3}\sqrt{\frac{2}{\pi}}\limsup_{\ell\to\infty}\sum_{y\in(a\ell+h\Z)\cap[0,\infty)}y\e^y\P(M_\ell>y).
\end{equation}
When the branching random walk is $(h,a)$-lattice, using Proposition~\ref{prop:restriction_lower} and Lemma~\ref{lem:negligible_integral}, we obtain
\begin{align}
    \limsup_{\ell \to \infty} \int_0^\infty y \e^y \P(M_\ell>y) \d{y}
    & = \limsup_{\ell \to \infty} \sum_{j \in (a\ell+h\Z) \cap [0,\infty)} \int_j^{j+h} y \e^y \P(M_\ell>y) \d{y} \nonumber \\
    & = \limsup_{\ell \to \infty} \sum_{j \in (a\ell+h\Z) \cap [0,\infty)} j \int_j^{j+h} \e^y \P(M_\ell>y) \d{y} \nonumber \\
    & = \limsup_{\ell \to \infty} \sum_{j \in (a\ell+h\Z) \cap [0,\infty)} j \e^j (\e^h - 1) \P(M_\ell>j) \nonumber \\
    & =\frac{\e^h-1}{h}\sigma^3\sqrt{\frac{\pi}{2}}\bar{C}_{\mathrm{L}}. \label{eq:unification_limsup}
\end{align}
Therefore, by Propositions~\ref{prop:finite_integral} and~\ref{prop:positivity}, we have $\bar{C}_{\mathrm{NL}} \in (0,\infty)$ and $\bar{C}_{\mathrm{L}} \in (0,\infty)$.

\begin{proposition}\label{prop:upper_bound}
    Assume~\eqref{eq:assumption_supercriticality}, \eqref{eq:assumption_boundary_case}, \eqref{eq:assumption_gaussianity}, \eqref{eq:assumption_peeling_lemma}, and let $(x_n)$ be any sequence such that $x_n \to \infty$ and $x_n = O(\sqrt{n})$ as $n \to \infty$.
    Let $(\ell_n)$ be a sequence of non-negative integers such that $\sqrt{\ell_n}/\log n \to \infty$ and $\ell_n = o(\sqrt{n})$ as $n \to \infty$.
    \begin{enumerate}
        \item If the branching random walk is non-lattice, then
        \begin{equation*}
            \limsup_{n \to \infty} \frac{\E[\Lambda_{n,\ell_n}]}{x_n \e^{-x_n - x_n^2/(2n\sigma^2)}} \le \bar{C}_{\mathrm{NL}}.
        \end{equation*}
        \item If the branching random walk is $(h,a)$-lattice and $x_n + m_n \in an + h\Z$, then
        \begin{equation*}
            \limsup_{n \to \infty} \frac{\E[\Lambda_{n,\ell_n}]}{x_n \e^{-x_n - x_n^2/(2n\sigma^2)}} \le \bar{C}_{\mathrm{L}}.
        \end{equation*}
    \end{enumerate}
\end{proposition}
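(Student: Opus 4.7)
The plan is to replace the time-varying barrier $f_{n-\ell_n}(j)+x_n$ in the event defining $\Lambda_{n,\ell_n}$ by the constant barrier $x_n+\delta_n$, where $\delta_n \coloneq f_{n-\ell_n}(0) = O(1/(n-\ell_n))$. Since $f_{n-\ell_n}(j) \le \delta_n$ for $1 \le j \le n-\ell_n$, this enlarges the walk's barrier event and, combined with~\eqref{eq:decomposition_Lambda_non_lattice} and the identity $\P_{S_{n-\ell_n}}(M_{\ell_n}>m_n+x_n) = \P(M_{\ell_n}>m_n+x_n-S_{n-\ell_n})$, yields an upper bound on $\E[\Lambda_{n,\ell_n}]$ in terms of a random walk with a single constant upper barrier.

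I would then discretize the endpoint $S_{n-\ell_n}$. Take $h>0$ arbitrary in the non-lattice case, or equal to the lattice span otherwise, and decompose $\{x_n+\delta_n-S_{n-\ell_n} \in [kh,(k+1)h)\}$ (non-lattice) or $\{x_n+\delta_n-S_{n-\ell_n} = kh\}$ (lattice) for $k \ge 0$. On each piece, bound $\e^{-S_{n-\ell_n}} \le \e^{-x_n-\delta_n+(k+1)h}$ and use the monotonicity of $\P(M_{\ell_n}>\cdot)$; Corollary~\ref{cor:simple_barrier_estimate}, applied with $y=kh+\delta_n$, controls the walk barrier probability. Substituting $v=m_n+kh$ (so the $y$ in the corollary becomes $v+|m_n|+\delta_n$), and using $\e^{|m_n|}/(n-\ell_n)^{3/2}\to 1$ together with $\e^{-x_n^2/(2(n-\ell_n)\sigma^2)} \sim \e^{-x_n^2/(2n\sigma^2)}$ (both guaranteed by $\ell_n=o(\sqrt{n})$), one obtains
\begin{equation*}
    \E[\Lambda_{n,\ell_n}] \le (1+o_n(1))\,\rho\,\frac{1}{\sigma^3}\sqrt{\tfrac{2}{\pi}}\,x_n\,\e^{-x_n-x_n^2/(2n\sigma^2)}\,\Sigma_n,
\end{equation*}
where $\Sigma_n \coloneq \sum_v (v+|m_n|)\,h\,\e^v\,\P(M_{\ell_n}>v)$ runs over the relevant discretization in $(m_n,\infty)$, and $\rho=\e^h$ in the non-lattice case while $\rho=1$ in the lattice case (where the inequality for $\e^{-S_{n-\ell_n}}$ is an equality on the lattice).

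It remains to show $\limsup_n \Sigma_n \le \sigma^3\sqrt{\pi/2}\,\bar{C}_\circ$ for $\circ \in \{\mathrm{NL},\mathrm{L}\}$. I would split according to $v$. For $v \ge \vep\sqrt{\ell_n}$ with fixed $\vep>0$, the hypothesis $|m_n|=o(\sqrt{\ell_n})$ gives $v+|m_n|=v(1+o(1))$, and $\sum v\,h\,\e^v\,\P(M_{\ell_n}>v)$ is identified with $\sigma^3\sqrt{\pi/2}\,\bar{C}_{\mathrm{NL}}$ by Riemann approximation (after $\vep\to 0$ via Proposition~\ref{prop:restriction_lower}) or with $\sigma^3\sqrt{\pi/2}\,\bar{C}_{\mathrm{L}}$ directly through~\eqref{eq:unification_limsup}. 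For $v\in[0,\vep\sqrt{\ell_n}]$, Hu's estimate~\eqref{eq:hu_upper_bound} gives $\e^v\P(M_{\ell_n}>v) \le C(1+v+\log\ell_n)/\ell_n^{3/2}$; the $v$-part is controlled by Proposition~\ref{prop:restriction_lower}, while the $|m_n|$-part contributes at most $O(|m_n|\vep^2/\sqrt{\ell_n})\to 0$. For $v<0$, the factor $v+|m_n|$ is bounded by $|m_n|$; Hu's estimate handles the regime $v\in[-|m_{\ell_n}|,0)$ and the trivial bound $\P(M_{\ell_n}>v)\le 1$ handles $v<-|m_{\ell_n}|$, both giving $o(1)$ contributions under $\log n = o(\sqrt{\ell_n})$.

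The main obstacle is the entropic-repulsion correction: replacing the time-varying barrier by the constant $x_n$ shifts the typical walk endpoint by $|m_{n-\ell_n}| \sim (3/2)\log n$, producing the extra $|m_n|$ in $(v+|m_n|)$, and controlling this shift uniformly across all three $v$-ranges is precisely what forces the hypothesis $\sqrt{\ell_n}/\log n\to\infty$ (the sharper decay $O(\vep^2/\sqrt{\ell_n})$ obtained from Hu rather than the Lemma~\ref{lem:negligible_integral} bound $O(\ell_n^{-1/4})$ is essential here). In the non-lattice case one concludes by letting $h\to 0^+$ to replace $\rho=\e^h$ by~$1$; in the lattice case $\rho=1$ and the identification with $\bar{C}_{\mathrm{L}}$ via~\eqref{eq:unification_limsup} closes the argument.
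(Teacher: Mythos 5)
Your proposal shares the paper's central strategy (truncate the barrier, decompose the endpoint $S_{n-\ell}$, apply the Doney--Pain asymptotic and compare the resulting Riemann sum to $C^*$), but it differs in two genuine ways: you pass to the \emph{constant} barrier $x_n+\delta_n$ once and for all, and you decompose the endpoint relative to $x_n$ rather than relative to the barrier's terminal value $m_{n-\ell}+x_n$. As a consequence, your $\Sigma_n$ carries the explicit $(v+|m_n|)$ factor, and you must show the $|m_n|$ correction washes out; the paper keeps the time-varying barrier in its tail sum $I_1$, so the Mallein factor there is $\asymp ih$ with no $|m_n|$ shift, and it only introduces the constant barrier (and hence the $|m_{n-\ell}|$ shift, absorbed into the $o_n(1)$) on the middle range $I_2$ where $ih\ge A\sqrt{\ell}$. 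Your observation that $\log n = o(\sqrt{\ell_n})$ is precisely what kills the $|m_n|$ shift is the same mechanism at work in the paper's $I_2$, just made explicit at a different stage.

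The genuine gap is the step producing
\[
\E[\Lambda_{n,\ell_n}] \le (1+o_n(1))\,\rho\,\tfrac{1}{\sigma^3}\sqrt{2/\pi}\,x_n\,\e^{-x_n-x_n^2/(2n\sigma^2)}\,\Sigma_n,
\]
with $\Sigma_n$ running over \emph{all} $v\in(m_n,\infty)$. You justify this by invoking Corollary~\ref{cor:simple_barrier_estimate} ``with $y=kh+\delta_n$'' for every $k$, but that corollary only yields the asymptotic constant $\frac{1}{\sigma^3}\sqrt{2/\pi}$ with controllable error when $y\to\infty$ and $y=o(\sqrt{n})$. For bounded $k$ the $(1+o_{y\to\infty}(1))$ factor is not small, and for $kh\gtrsim\sqrt{n}$ (i.e.\ $v\gtrsim\sqrt{n}$) the corollary simply does not apply. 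In those ranges one must instead use Lemma~\ref{lem:mallein_random_walk_estimate}, which gives only an unspecified constant $C$ in place of $\frac{1}{\sigma^3}\sqrt{2/\pi}$, and then verify that the resulting tail contribution of $C\,x_n\e^{-x_n}\sum_{v\text{ tail}}(v+|m_n|)h\e^v\P(M_{\ell_n}>v)$ is $o(x_n\e^{-x_n})$. That is exactly what the paper does by explicitly splitting into $I_1$ (Mallein) and $I_2$ (Doney--Pain) \emph{before} passing to a Riemann sum; your step~(2) analysis of $\Sigma_n$ does the right arithmetic on the three $v$-ranges, but it cannot substitute for that split, because the claimed inequality feeding $\Sigma_n$ is itself unproved where Corollary~\ref{cor:simple_barrier_estimate} fails. (A secondary issue: the $v>B\ell_n$ tail of the $|m_n|$-part needs the sharper bound $\int_{B\ell}^\infty\e^y\P(M_\ell>y)\d y\le\sigma^2/(B^2\ell)$ coming from Proposition~\ref{prop:restriction_upper}, not the $O(\ell^{-1/4})$ of Lemma~\ref{lem:negligible_integral}, which would give $|m_n|\cdot O(\ell_n^{-1/4})\not\to0$ under the stated hypothesis $\log n=o(\sqrt{\ell_n})$.) Once the $I_1/I_2$-type split is inserted, your argument closes, and the constant-barrier viewpoint is an equally valid way to organize the proof.
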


\begin{proof}
To simplify notation, let us abbreviate $\ell = \ell_n$.

\textbf{\emph{Non-lattice case.}}
Fix $h > 0$.
By decomposing the event $\{S_{n-\ell} \le f_{n-\ell}(n-\ell) + x_n\}$ in~\eqref{eq:decomposition_Lambda_non_lattice}, we obtain
\begin{align}
    \E[\Lambda_{n,\ell}]
    & = \sum_{i \ge 0} \E \left[ \e^{-S_{n-\ell}} \mathds{1}_{\left\{ \substack{\forall j \le n-\ell, S_j \le f_{n-\ell}(j) + x_n, \\ S_{n-\ell} - m_{n-\ell} - x_n \in (-(i+1)h,-ih]} \right\}} \P_{S_{n-\ell}}(M_\ell > m_n + x_n) \right] \nonumber \\
    & \le \sum_{i \ge 0} (n-\ell)^{3/2} \e^{-x_n + (i+1)h} \P\left(\substack{\forall j \le n-\ell, S_j \le f_{n-\ell}(j) + x_n, \\ S_{n-\ell} - m_{n-\ell} - x_n \in (-(i+1)h,-ih]}\right) \P(M_\ell > (i-1)h), \label{eq:upper_bound_0}
\end{align}
for~$n$ large enough.
Now, we consider $A > 0$, $B > 0$, and we split the sum into two parts according to whether $i \in [A\sqrt{\ell}/h,B\ell/h]$ or not.
We denote by $I_1(n,h)$ the sum for $i \notin [A\sqrt{\ell}/h,B\ell/h]$ and $I_2(n,h)$ the sum for $i \in [A\sqrt{\ell}/h,B\ell/h]$.
We apply Lemma~\ref{lem:mallein_random_walk_estimate} to bound
\begin{align*}
    I_1(n,h)
    & \le C x_n \e^{-x_n} \sum_{\substack{i \ge 0, \\ i \notin [A\sqrt{\ell}/h,B\ell/h]}} \e^{(i+1)h} (1+ih)(1+h) \P(M_\ell > (i-1)h) \\
    & \le C x_n \e^{-x_n} \frac{1+h}{h} \int_{[-2h,A\sqrt{\ell}-2h] \cup [B\ell-2h,\infty)} \e^{y+3h} (1+y+2h) \P(M_\ell > y) \d{y},
\end{align*}
where the last inequality follows from monotonicity.
Note that by~\eqref{eq:hu_upper_bound},
\begin{equation*}
    \limsup_{n \to \infty} \int_{[-2h,0]} \e^{y+3h} (1+y+2h) \P(M_\ell > y) \d{y} \le \limsup_{n \to \infty} C \e^{3h} (1+2h) h \P(M_\ell > -2h) = 0.
\end{equation*}
Besides, by Propositions~\ref{prop:restriction_lower} and~\ref{prop:restriction_upper}, and Lemma~\ref{lem:negligible_integral},
\begin{equation*}
    \limsup_{B \to \infty} \limsup_{A \to 0^+} \limsup_{n \to \infty} \int_{[0,A\sqrt{\ell}-2h] \cup [B\ell-2h,\infty)} \e^{y+3h} (1+y+2h) \P(M_\ell > y) \d{y} = 0.
\end{equation*}
Therefore,
\begin{equation}\label{eq:upper_bound_1}
    \limsup_{B \to \infty} \limsup_{A \to 0^+} \limsup_{n \to \infty} \frac{I_1(n,h)}{x_n \e^{-x_n-x_n^2/(2n\sigma^2)}} = 0.
\end{equation}
For $I_2(n,h)$, replacing the barrier $f_{n-\ell}(j) + x_n$ with~$x_n$, we obtain
\begin{equation*}
    I_2(n,h) \le \sum_{i \in [A\sqrt{\ell}/h,B\ell/h]} (n-\ell)^{3/2} \e^{-x_n + (i+1)h} \tilde{\chi}_{n-\ell}(x_n,ih,h) \P(M_\ell > (i-1)h),
\end{equation*}
where
\begin{equation*}
    \tilde{\chi}_k(x,y,z) \coloneq \P(\forall j < k, S_j \le x, S_k - m_k - x \in (-y-z,-y]).
\end{equation*}
Since $\sqrt{\ell}/\log n \to \infty$ and $\ell = o(\sqrt{n})$ as $n \to \infty$, Corollary~\ref{cor:simple_barrier_estimate} yields
\begin{align*}
    I_2(n,h)
    & \le \frac{(1 + o_n(1))}{\sigma^3} \sqrt{\frac{2}{\pi}} x_n \e^{-x_n-x_n^2/(2n\sigma^2)} \sum_{i \in [A\sqrt{\ell}/h,B\ell/h]} \e^{(i+1)h} ih^2 \P(M_\ell > (i-1)h) \\
    & \le \frac{(1 + o_n(1))}{\sigma^3} \sqrt{\frac{2}{\pi}} x_n \e^{-x_n-x_n^2/(2n\sigma^2)} \int_{A\sqrt{\ell}-2h}^{B\ell+h} \e^{y+3h} (y+2h) \P(M_\ell > y) \d{y},
\end{align*}
where the last inequality follows from monotonicity.
Using Lemma~\ref{lem:negligible_integral}, we deduce that
\begin{equation}\label{eq:upper_bound_2}
    \frac{I_2(n,h)}{x_n \e^{-x_n-x_n^2/(2n\sigma^2)}} \le (1 + o_n(1)) \frac{\e^{3h}}{\sigma^3} \sqrt{\frac{2}{\pi}} \int_0^\infty y \e^y \P(M_\ell > y) \d{y} + o_n(1).
\end{equation}
Coming back to~\eqref{eq:upper_bound_0}, the asymptotics~\eqref{eq:upper_bound_1} and~\eqref{eq:upper_bound_2} imply that
\begin{equation}\label{eq:upper_bound_non_asymp}
    \frac{\E[\Lambda_{n,\ell}]}{x_n \e^{-x_n-x_n^2/(2n\sigma^2)}} \le (1 + o_n(1)) \frac{\e^{3h}}{\sigma^3} \sqrt{\frac{2}{\pi}} \int_0^\infty y \e^y \P(M_\ell > y) \d{y} + o_n(1).
\end{equation}
Letting $n \to \infty$, we obtain
\begin{equation*}
    \limsup_{n \to \infty} \frac{\E[\Lambda_{n,\ell}]}{x_n \e^{-x_n-x_n^2/(2n\sigma^2)}} \le \frac{\e^{3h}}{\sigma^3} \sqrt{\frac{2}{\pi}} \limsup_{n \to \infty} \int_0^\infty y \e^y \P(M_\ell > y) \d{y} = \e^{3h} \bar{C}_{\mathrm{NL}}.
\end{equation*}
We conclude the proof by letting $h \to 0^+$ .

\textbf{\emph{Lattice case.}} Now assume the branching random walk to be $(h,a)$-lattice.
Similarly to~\eqref{eq:upper_bound_0}, we decompose
\begin{align}
    \E[\Lambda_{n,\ell}]
    & = \sum_{\substack{k \in a\ell+h\Z \\ k \ge m_n-m_{n-\ell}}} n^{3/2} \e^{k-x_n} \P \left( \substack{\forall i < n-\ell, S_i \le f_{n-\ell}(i)+x_n, \\ S_{n-\ell} = m_n + x_n - k} \right) \P(M_\ell > k) \label{eq:decomposition_Lambda_lattice} \\
    & \le \sum_{\substack{k \in a\ell+h\Z \\ k \ge m_n-m_{n-\ell}}} n^{3/2} \e^{k-x_n} \P \left( \substack{\forall i < n-\ell, S_i \le x_n, \\ S_{n-\ell} = m_n + x_n - k} \right) \P(M_\ell > k). \nonumber
\end{align}
Now, we consider $A > 0$, $B > 0$, and we split the sum into two parts according to whether $k \in [A\sqrt{\ell},B\ell]$ or not.
Applying Corollary~\ref{cor:simple_barrier_estimate} in the first case and Lemma~\ref{lem:mallein_random_walk_estimate} in the second one, we obtain
\begin{multline*}
    \E[\Lambda_{n,\ell}] \le \frac{(1+o_n(1))}{\sigma^3} \sqrt{\frac{2}{\pi}} h x_n \e^{-x_n-x_n^2/(2n\sigma^2)} \sum_{k \in (a\ell + h\Z) \cap [A\sqrt{\ell},B\ell]} k \e^k \P(M_\ell > k) \\
    + C x_n \e^{-x_n} \sum_{k \in (a\ell + h\Z) \cap [m_n-m_{n-\ell},\infty) \setminus [A\sqrt{\ell},B\ell]} (k + \log n) \e^k \P(M_\ell > k).
\end{multline*}
Note that by~\eqref{eq:hu_upper_bound},
\begin{equation*}
    \limsup_{n \to \infty} \sum_{k \in (a\ell + h\Z) \cap [m_n-m_{n-\ell},0)} (k + \log n) \e^k \P(M_\ell > k) \le \limsup_{n \to \infty} C \frac{\log n}{\ell^{3/2}} = 0.
\end{equation*}
Besides, by Propositions~\ref{prop:restriction_lower} and~\ref{prop:restriction_upper}, and Lemma~\ref{lem:negligible_integral},
\begin{equation*}
    \limsup_{B \to \infty} \limsup_{A \to 0^+} \limsup_{n \to \infty} \sum_{k \in (a\ell + h\Z) \cap [0,\infty) \setminus [A\sqrt{\ell},B\ell]} (k + \log n) \e^k \P(M_\ell > k) = 0.
\end{equation*}
It follows that
\begin{equation*}
    \limsup_{n \to \infty} \frac{\E[\Lambda_{n,\ell}]}{x_n \e^{-x_n-x_n^2/(2n\sigma^2)}} \le \frac{h}{\sigma^3}\sqrt{\frac{2}{\pi}}\limsup_{\ell\to\infty}\sum_{k\in(a\ell+h\Z)\cap[0,\infty)}k\e^k\P(M_\ell>k) = \bar{C}_{\mathrm{L}},
\end{equation*}
which completes the proof.
\end{proof}

Let us now state the lower bound.

\begin{proposition}\label{prop:lower_bound}
    Assume~\eqref{eq:assumption_supercriticality}, \eqref{eq:assumption_boundary_case}, \eqref{eq:assumption_gaussianity}, \eqref{eq:assumption_peeling_lemma}, and let $(x_n)$ be any sequence such that $x_n \to \infty$ and $x_n = O(\sqrt{n})$ as $n \to \infty$.
    \begin{enumerate}
        \item If the branching random walk is non-lattice, then
        \begin{equation*}
            \limsup_{\ell \to \infty} \liminf_{n \to \infty} \frac{\E[\Lambda_{n,\ell}]}{x_n \e^{-x_n - x_n^2/(2n\sigma^2)}} \ge \bar{C}_{\mathrm{NL}}.
        \end{equation*}
        \item If the branching random walk is $(h,a)$-lattice and $x_n + m_n \in an + h\Z$, then
        \begin{equation*}
            \limsup_{\ell \to \infty} \liminf_{n \to \infty} \frac{\E[\Lambda_{n,\ell}]}{x_n \e^{-x_n - x_n^2/(2n\sigma^2)}} \ge \bar{C}_{\mathrm{L}}.
        \end{equation*}
    \end{enumerate}
\end{proposition}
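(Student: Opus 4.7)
The plan is to mirror the argument of Proposition~\ref{prop:finite_integral}, following the two-level barrier strategy outlined just before the present statement. Starting from the many-to-one identity~\eqref{eq:decomposition_Lambda_non_lattice}, I first lower bound $\E[\Lambda_{n,\ell}]$ by restricting $(S_j)_{j\le n-\ell}$ to a two-level constant barrier. Fix $\lambda\in(0,1-\e^{-2/3}]$; with this choice the elementary inequalities $f_{n-\ell}(j)\ge -1$ for $j\le\lfloor\lambda(n-\ell)\rfloor$ and $f_{n-\ell}(j)\ge m_{n-\ell}$ for $j\le n-\ell$ both hold, so that the event $\{S_j\le x_n-1 \text{ for } j\le\lfloor\lambda(n-\ell)\rfloor\}\cap\{S_j\le x_n+m_{n-\ell} \text{ for } \lfloor\lambda(n-\ell)\rfloor\le j\le n-\ell\}$ is contained in the original barrier event and yields the desired lower bound.

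I then decompose $\{S_{n-\ell}\le x_n+m_{n-\ell}\}$ into slices of width~$h$ — an arbitrary $h>0$ in the non-lattice case, the lattice span in the lattice case. On the slice $S_{n-\ell}\in(x_n+m_{n-\ell}-(i+1)h,x_n+m_{n-\ell}-ih]$, I use $\e^{-S_{n-\ell}}\ge\e^{-x_n-m_{n-\ell}+ih}$ and $\P_{S_{n-\ell}}(M_\ell>m_n+x_n)\ge\P(M_\ell>m_n-m_{n-\ell}+(i+1)h)$, and apply Corollary~\ref{cor:double_barrier_estimate} with $x=x_n-1$, $z=-1-m_{n-\ell}$ and $y=ih$ to lower bound the barrier-event probability on the slice. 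The identity $\e^{-m_{n-\ell}}=(n-\ell)^{3/2}$ exactly cancels the denominator from the Corollary, and the asymptotic equivalences $x_n-1\sim x_n$ and $\e^{-(x_n-1)^2/(2(n-\ell)\sigma^2)}\sim\e^{-x_n^2/(2n\sigma^2)}$ — both valid under $x_n=O(\sqrt n)$ and $\ell=o(n)$ — produce the target leading factor $x_n\e^{-x_n-x_n^2/(2n\sigma^2)}$.

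For any fixed $\epsilon>0$, the uniformity in Corollary~\ref{cor:double_barrier_estimate} supplies a $y_0>0$ such that the $(1+o_{y\to\infty}(1))$ factor is at least $(1-\epsilon)$ as soon as $y\ge y_0$; restricting the sum to indices with $ih\ge y_0$ and taking $\liminf_{n\to\infty}$ for~$\ell$ fixed (using $m_n-m_{n-\ell}\to 0$), I obtain
\begin{equation*}
    \liminf_{n\to\infty}\frac{\E[\Lambda_{n,\ell}]}{x_n\e^{-x_n-x_n^2/(2n\sigma^2)}}\ge (1-\epsilon)\frac{h^2}{\sigma^3}\sqrt{\frac{2}{\pi}}\sum_{ih\ge y_0}i\e^{ih}\P(M_\ell>(i+1)h),
\end{equation*}
with the analogous sum $\frac{h}{\sigma^3}\sqrt{\frac{2}{\pi}}\sum_{k\ge y_0}k\e^k\P(M_\ell>k)$ over $k\in a\ell+h\Z$ in the lattice case (the assumption $x_n+m_n\in an+h\Z$ guaranteeing $S_{n-\ell}\in a(n-\ell)+h\Z$). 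Taking then $\limsup_{\ell\to\infty}$, Lemma~\ref{lem:negligible_integral} and Proposition~\ref{prop:restriction_lower} (combined with~\eqref{eq:hu_upper_bound} in the lattice case) ensure the truncation $y<y_0$ is negligible, and the right-hand side converges to $\e^{-2h}\bar{C}_{\mathrm{NL}}$ in the non-lattice case — by the same monotonicity argument as in~\eqref{eq:finite_integral_restriction} converting the Riemann sum into $\int_0^\infty y\e^y\P(M_\ell>y)\d{y}$ — and to $\bar{C}_{\mathrm{L}}$ in the lattice case, directly by~\eqref{eq:C_limsup}. The non-lattice case is concluded by sending $\epsilon\to 0$ and then $h\to 0^+$.

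The main obstacle is bookkeeping the various $o$-terms: one has to juggle Corollary~\ref{cor:double_barrier_estimate}'s uniformity regimes simultaneously in $x\to\infty$, $y\to\infty$ and $n\to\infty$, choose the cut-off~$y_0$ so as to leverage the asymptotic in~$y$ without sacrificing the main contribution to the integral, and justify the Riemann-to-integral conversion via monotonicity with a controlled error factor $\e^{-2h}$ that vanishes as $h\to 0^+$. The remaining verifications follow the same pattern as in the proof of Proposition~\ref{prop:finite_integral}.
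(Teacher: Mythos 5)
Your proposal is correct and follows essentially the same route as the paper's proof: restrict to the same two-level piecewise-constant barrier (with the same choice of $\lambda$), slice the terminal position by $h$, push $\liminf_n$ through the sum via Fatou, apply Corollary~\ref{cor:double_barrier_estimate} with precisely the parameters $x=x_n-1$, $z=-1-m_{n-\ell}$, $y=ih$, and then convert the Riemann sum to an integral by monotonicity before invoking Lemma~\ref{lem:negligible_integral} and Proposition~\ref{prop:restriction_lower}. The only cosmetic deviation is that you control the $o_{y\to\infty}(1)$ factor with a fixed cutoff $y_0=y_0(\epsilon)$ and send $\epsilon\to 0$ afterward, whereas the paper uses an $\ell$-dependent cutoff $A_\ell\to\infty$, $A_\ell=o(\sqrt{\ell})$ and absorbs the truncation error in the $\limsup_\ell$ — the two bookkeeping schemes are equivalent.
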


\begin{proof}
\textbf{\emph{Non-lattice case.}}
Let $h > 0$ and $\ell \ge 1$.
Similarly to~\eqref{eq:finite_integral_fatou}, by decomposing the event $\{S_{n-\ell} \le f_{n-\ell}(n-\ell)+x_n\}$ in~\eqref{eq:decomposition_Lambda_non_lattice} and then applying Fatou's lemma, we obtain
\begin{equation*}
    \liminf_{n \to \infty} \frac{\E[\Lambda_{n,\ell}]}{x_n \e^{-x_n-x_n^2/(2n\sigma^2)}} \ge \sum_{i \ge A_\ell/h} \e^{ih} \P(M_\ell > (i+1)h) \liminf_{n \to \infty} \frac{(n-\ell)^{3/2}}{x_n \e^{-x_n^2/(2n\sigma^2)}} \P\left(\substack{\forall j \le n-\ell, S_j \le f_{n-\ell}(j) + x_n, \\ S_{n-\ell} - m_{n-\ell} - x_n \in (-(i+1)h,-ih]}\right),
\end{equation*}
where~$A_\ell$ is chosen, in view of applying Corollary~\ref{cor:double_barrier_estimate}, such that $A_\ell \to \infty$ and $A_\ell = o(\sqrt{\ell})$ as $\ell \to \infty$.
Fix $\lambda \in (0,1)$ small enough so that $\log(1-\lambda) > -2/3$.
Thus, for any $k \ge 1$ and $x,y,z \ge 0$, we have $f_k(\lfloor \lambda k \rfloor) > -1$ and then
\begin{equation*}
   \P\left(\forall j \le k, S_j \le f_k(j) + x, \\ S_{k} - m_k - x \in (-y-z,-z]\right) \le \tilde{\chi}_k^{(\lambda)}(x,y,z),
\end{equation*}
where
\begin{equation*}
    \tilde{\chi}_k^{(\lambda)}(x,y,z) \coloneq \P(\forall j \le \lfloor \lambda k \rfloor, S_j \le x - 1, \forall \lfloor \lambda k \rfloor < j' < k, S_{j'} \le m_k + x, S_k-m_k - x \in (-y-z,-y]).
\end{equation*}
Therefore,
\begin{align*}
    \liminf_{n \to \infty} \frac{\E[\Lambda_{n,\ell}]}{x_n \e^{-x_n-x_n^2/(2n\sigma^2)}}
    & \ge \sum_{i \ge A_\ell/h} \e^{ih} \P(M_\ell > (i+1)h) \liminf_{n \to \infty} \frac{(n-\ell)^{3/2}}{x_n \e^{-x_n^2/(2n\sigma^2)}} \tilde{\chi}_{n-\ell}^{(\lambda)}(x_n,ih,h) \\
    & \ge \frac{(1 + o_\ell(1))}{\sigma^3} \sqrt{\frac{2}{\pi}} \sum_{i \ge A_\ell/h} \e^{ih} \P(M_\ell > (i+1)h) ih^2,
\end{align*}
by Corollary~\ref{cor:double_barrier_estimate}.
Letting $\ell \to \infty$, it follows from~\eqref{eq:finite_integral_restriction} and~\eqref{eq:finite_integral_limsup} that
\begin{equation*}
    \limsup_{\ell \to \infty} \liminf_{n \to \infty} \frac{\E[\Lambda_{n,\ell}]}{x_n \e^{-x_n-x_n^2/(2n\sigma^2)}} \ge \frac{\e^{-2h}}{\sigma^3} \sqrt{\frac{2}{\pi}} \limsup_{\ell \to \infty} \int_0^\infty y \e^y \P(M_\ell > y) \d{y} = \e^{-2h} \bar{C}_{\mathrm{NL}}.
\end{equation*}
We conclude the proof in the non-lattice case by letting $h \to 0^+$.

\textbf{\emph{Lattice case.}} Now assume the branching random walk to be $(h,a)$-lattice.
In view of the decomposition~\eqref{eq:decomposition_Lambda_lattice}, Fatou's lemma yields
\begin{equation*}
    \liminf_{n \to \infty} \frac{\E[\Lambda_{n,\ell}]}{x_n \e^{-x_n-x_n^2/(2n\sigma^2)}} \ge \sum_{k \in (a\ell+h\Z) \cap [A_\ell,\infty)} \e^k \P(M_\ell > k) \liminf_{n \to \infty} \frac{\P \left( \substack{\forall i < n-\ell, S_i \le f_{n-\ell}(i) + x_n, \\ S_{n-\ell} = m_{n-\ell} + x_n - k} \right)}{n^{-3/2} x_n \e^{-x_n^2/(2n\sigma^2)}},
\end{equation*}
where~$A_\ell$ is such that $A_\ell \to \infty$ and $A_\ell = o(\sqrt{\ell})$ as $\ell \to \infty$.
Fix $\lambda \in (0,1)$ such that $\log(1-\lambda) > -2/3$, we have
\begin{equation*}
    \P \left( \substack{\forall i < n-\ell, S_i \le f_{n-\ell}(i) + x_n, \\ S_{n-\ell} = m_{n-\ell} + x_n - k} \right) \ge \P \left( \substack{\forall i < \lfloor \lambda(n-\ell) \rfloor, S_i \le x_n-1, \\ \forall \lfloor \lambda(n-\ell) \rfloor < j < n-\ell, S_j \le m_{n-\ell} + x_n, \\ S_{n-\ell} = m_{n-\ell} + x_n - k} \right).
\end{equation*}
By Corollary~\ref{cor:double_barrier_estimate},
\begin{equation*}
    \liminf_{n \to \infty} \frac{\E[\Lambda_{n,\ell}]}{x_n \e^{-x_n-x_n^2/(2n\sigma^2)}} \ge \frac{(1 + o_\ell(1))h}{\sigma^3} \sqrt{\frac{2}{\pi}} \sum_{k \in (a\ell+h\Z) \cap [A_\ell,\infty)} k \e^k \P(M_\ell > k).
\end{equation*}
Letting $\ell \to \infty$, it follows from Proposition~\ref{prop:restriction_lower} that
\begin{equation*}
    \limsup_{\ell \to \infty} \liminf_{n \to \infty} \frac{\E[\Lambda_{n,\ell}]}{x_n \e^{-x_n-x_n^2/(2n\sigma^2)}} \ge \frac{h}{\sigma^3} \sqrt{\frac{2}{\pi}} \limsup_{\ell \to \infty} \sum_{k \in (a\ell+h\Z) \cap [0_\ell,\infty)} k \e^k \P(M_\ell > k) = \bar{C}_{\mathrm{L}},
\end{equation*}
which concludes the proof.
\end{proof}

\section{Proof of Theorem~\ref{th:moderate_deviation}}\label{sct:proof_of_theorem}

We are now able to prove our main result Theorem~\ref{th:moderate_deviation}.
At first, we show that the equivalents~\eqref{eq:moderate_deviation_nl} and~\eqref{eq:moderate_deviation_l} hold by replacing~$C_{\mathrm{NL}}$ and~$C_{\mathrm{L}}$ with~$\bar{C}_{\mathrm{NL}}$ and~$\bar{C}_{\mathrm{L}}$, defined in~\eqref{eq:C_limsup}.
Once we know such an asymptotic, we deduce that the former constants are well defined and, subsequently, equal to the latter constants.

\begin{proof}[Proof of Theorem~\ref{th:moderate_deviation}]
    First assume the branching random walk to be non-lattice and let $(x_n)$ be such that $x_n \to \infty$ and $x_n = O(\sqrt{n})$.
    By Propositions~\ref{prop:expectation_equivalent} and~\ref{prop:upper_bound}, choosing~$\ell_n$ such that $\sqrt{\ell_n}/\log n \to \infty$ and $\ell_n = o(\sqrt{n})$, we obtain
    \begin{equation*}
        \limsup_{n \to \infty} \frac{\P(M_n > m_n + x_n)}{x_n \e^{-x_n - x_n^2/(2n\sigma^2)}} = \limsup_{n \to \infty} \frac{\E[\Lambda_{n,\ell_n}]}{x_n \e^{-x_n - x_n^2/(2n\sigma^2)}} \le \bar{C}_{\mathrm{NL}}.
    \end{equation*}
    For the lower bound, we consider~$\ell$ constant in~$n$ and let $\ell \to \infty$ after $n \to \infty$.
    By Corollary~\ref{cor:expectation_equivalent} and Proposition~\ref{prop:lower_bound},
    \begin{equation*}
        \liminf_{n\to\infty} \frac{\P(M_n>m_n+x_n)}{x_n \e^{-x_n-x_n^2/(2n\sigma^2)}} = \limsup_{\ell\to\infty} \liminf_{n\to\infty} \frac{\E[\Lambda_{n,\ell}]}{x_n \e^{-x_n-x_n^2/(2n\sigma^2)}} \ge \bar{C}_{\mathrm{NL}}.
    \end{equation*}
    Besides, the constant~$\bar{C}_{\mathrm{NL}}$ is in $(0,\infty)$, by Propositions~\ref{prop:finite_integral} and~\ref{prop:positivity}.
    Hence,
    \begin{equation}\label{eq:close_to_theorem}
        \P(M_n > m_n + x_n) \sim \bar{C}_{\mathrm{NL}} x_n \e^{-x_n - x_n^2/(2n\sigma^2)}.
    \end{equation}
    Now, as above, consider $\ell_n$ such that $\sqrt{\ell_n}/\log n \to \infty$ and $\ell_n = o(\sqrt{n})$, and further assume that it takes all integer values (\eg set $\ell_n = \lfloor n^{1/3} \rfloor$).
    By~\eqref{eq:close_to_theorem} and~Proposition~\ref{prop:expectation_equivalent},
    \begin{equation*}
        \frac{1}{\sigma^3} \sqrt{\frac{2}{\pi}} \limsup_{\ell\to\infty} \int_0^\infty y \e^y\P(M_\ell>y)\d{y} = \lim_{n\to\infty} \frac{\P(M_n>m_n+x_n)}{x_n \e^{-x_n-x_n^2/(2n\sigma^2)}} = \lim_{n\to\infty} \frac{\E[\Lambda_{n,\ell_n}]}{x_n \e^{-x_n-x_n^2/(2n\sigma^2)}}.
    \end{equation*}
    Therefore, taking the $\liminf$ in~\eqref{eq:upper_bound_non_asymp} (recall that, in this context, we wrote $\ell = \ell_n$ for short), we obtain
    \begin{align*}
        \frac{1}{\sigma^3} \sqrt{\frac{2}{\pi}} \limsup_{\ell\to\infty} \int_0^\infty y \e^y\P(M_\ell>y)\d{y}
        & \le \frac{1}{\sigma^3} \sqrt{\frac{2}{\pi}} \liminf_{n\to\infty} \int_0^\infty y \e^y\P(M_{\ell_n}>y)\d{y} \\
        & = \frac{1}{\sigma^3} \sqrt{\frac{2}{\pi}} \liminf_{\ell\to\infty} \int_0^\infty y \e^y\P(M_\ell>y)\d{y}.
    \end{align*}
    Thus, the limit exists, \ie~$C_{\mathrm{NL}}$ is well defined and is equal to~$\bar{C}_{\mathrm{NL}}$.
    This, together with~\eqref{eq:close_to_theorem}, concludes the proof of~\eqref{eq:moderate_deviation_nl}.
    
    Regarding the lattice case, the same argument shows that, for $(x_n)$ such that $x_n \to \infty$, $x_n = O(\sqrt{n})$, and $x_n + m_n \in an+h\Z$, we have
    \begin{equation*}
        \P(M_n > m_n + x_n) \sim \bar{C}_{\mathrm{L}} x_n \e^{-x_n - x_n^2/(2n\sigma^2)},
    \end{equation*}
    and
    \begin{equation*}
        \frac{h}{\sigma^3}\sqrt{\frac{2}{\pi}}\limsup_{\ell\to\infty}\sum_{y\in(a\ell+h\Z)\cap[0,\infty)}y\e^y\P(M_\ell>y) \le \frac{h}{\sigma^3}\sqrt{\frac{2}{\pi}}\liminf_{\ell\to\infty}\sum_{y\in(a\ell+h\Z)\cap[0,\infty)}y\e^y\P(M_\ell>y),
    \end{equation*}
    which concludes the proof of~\eqref{eq:moderate_deviation_l} and of the theorem.
\end{proof}

\section{Applications}\label{sct:applications}

In this section, we present two applications of our main result Theorem~\ref{th:moderate_deviation}.
The first is a refinement of the study of the integral defining~$C^*$ in~\eqref{eq:unification}.
In Section~\ref{sct:integral}, we showed that the domain of integration can be restricted to $[\vep\sqrt{\ell},B\ell]$ for~$\vep$ small and~$B$ large.
Here, we show that it can even be restricted to a $\sqrt{\ell}$-neighborhood.
The second application concerns the two-speed branching random walk in the so-called \emph{mean regime}.
This is a more general model than the branching random walk, for which we prove analogs of~\eqref{eq:aidekon} and~\eqref{eq:bramson_ding_zeitouni}.

\subsection{Main support of the integral defining~\texorpdfstring{$C^*$}{C*}}\label{sct:integral_2}

Here is the main result of this section.

\begin{proposition}\label{prop:domain}
    Assume~\eqref{eq:assumption_supercriticality}, \eqref{eq:assumption_boundary_case}, \eqref{eq:assumption_gaussianity}, and~\eqref{eq:assumption_peeling_lemma}.
    For any $\vep > 0$, there exist $A, B \in (0,\infty)$ such that
    \begin{equation*}
        \limsup_{\ell\to\infty} \int_{[0,\infty)\setminus[A\sqrt{\ell},B\sqrt{\ell}]}  y \e^y \P(M_\ell>y) \d y \le \vep.
    \end{equation*}
\end{proposition}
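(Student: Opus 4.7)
My plan is to combine the uniform asymptotic equivalent given by Corollary~\ref{cor:uniform_convergence} on the bulk region $[A\sqrt{\ell}, B\sqrt{\ell}]$ with the convergence of the full integral $\int_0^\infty y \e^y \P(M_\ell > y) \d{y}$ as $\ell \to \infty$, whose existence is established in Section~\ref{sct:proof_of_theorem}. The idea is to compute explicitly the limit of the integral on the bulk and then recover the statement of the proposition by subtraction.

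Treating first the non-lattice case, I would fix $0 < A < B$ and apply Corollary~\ref{cor:uniform_convergence} with $a_\ell = A\sqrt{\ell} - m_\ell \to \infty$ and $b_\ell = B\sqrt{\ell} - m_\ell = O(\sqrt{\ell})$, which yields the uniform equivalent $\P(M_\ell > m_\ell + x) \sim C_{\mathrm{NL}} x \e^{-x - x^2/(2\ell\sigma^2)}$ for $x \in [a_\ell,b_\ell]$. Substituting $y = x + m_\ell$ and then using the Gaussian change of variable $u = x/(\sigma\sqrt{\ell})$, a direct computation (with $\e^{m_\ell} = \ell^{-3/2}$) yields
\[
    \lim_{\ell \to \infty} \int_{A\sqrt{\ell}}^{B\sqrt{\ell}} y \e^y \P(M_\ell > y) \d{y} = C_{\mathrm{NL}} \sigma^3 \int_{A/\sigma}^{B/\sigma} u^2 \e^{-u^2/2} \d{u}.
\]
The cross term coming from $x(x + m_\ell) = x^2 + m_\ell x$, and the shift of the endpoints by $m_\ell$, both contribute at most $O(\log\ell/\sqrt{\ell})$ and so vanish in the limit.

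On the other hand, by the very definition of $C_{\mathrm{NL}}$ established in the proof of Theorem~\ref{th:moderate_deviation}, the limit $\lim_\ell \int_0^\infty y \e^y \P(M_\ell > y) \d{y} = \sigma^3 \sqrt{\pi/2}\, C_{\mathrm{NL}}$ exists, and the Gaussian identity $\int_0^\infty u^2 \e^{-u^2/2} \d{u} = \sqrt{\pi/2}$ allows me to subtract the two limits and obtain
\[
    \lim_{\ell \to \infty} \int_{[0,\infty) \setminus [A\sqrt{\ell}, B\sqrt{\ell}]} y \e^y \P(M_\ell > y) \d{y} = C_{\mathrm{NL}} \sigma^3 \left( \sqrt{\pi/2} - \int_{A/\sigma}^{B/\sigma} u^2 \e^{-u^2/2} \d{u} \right),
\]
which can be made smaller than~$\varepsilon$ by choosing $A > 0$ sufficiently small and $B > 0$ sufficiently large. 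The $(h,a)$-lattice case is handled analogously by replacing the integral with the discrete sum of~\eqref{eq:unification_limsup} and using the lattice part of Corollary~\ref{cor:uniform_convergence}.

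The step I expect to require the most care is the verification that the perturbation induced by $m_\ell$ in the Gaussian integral computation (both in the integrand and in the bounds) is uniformly negligible as $\ell \to \infty$, for each fixed choice of $A$ and $B$. This is essentially a dominated-convergence argument relying on the fact that $m_\ell = -(3/2)\log \ell = o(\sqrt{\ell})$, combined with the uniform rate of convergence provided by Corollary~\ref{cor:uniform_convergence}; the computation is routine but must be carried out cleanly to avoid hiding the $m_\ell$-contributions inside $o(1)$ terms that are not truly uniform.
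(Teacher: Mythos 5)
Your proposal is correct and relies on the same two ingredients as the paper---Corollary~\ref{cor:uniform_convergence} and the existence of the limit $\lim_\ell \int_0^\infty y \e^y \P(M_\ell>y)\d y = \sqrt{\pi/2}\,\sigma^3 C^*$ established in Theorem~\ref{th:moderate_deviation}---but the execution is slightly different. You use the \emph{two-sided} uniform asymptotic from Corollary~\ref{cor:uniform_convergence} on the bulk $[A\sqrt{\ell},B\sqrt{\ell}]$ to compute the exact Gaussian limit of the bulk integral and then subtract it from the full limit, handling both tails simultaneously. The paper instead treats the two tails separately: the left tail $[0,A\sqrt{\ell}]$ is dispatched directly via Proposition~\ref{prop:restriction_lower} (which relies only on the cruder upper bound~\eqref{eq:hu_upper_bound}), while the right tail $[B\sqrt{\ell},\infty)$ is controlled by a one-sided argument---showing that $\liminf_B \liminf_\ell \int_0^{B\sqrt{\ell}} y\e^y\P(M_\ell>y)\d y$ already saturates the full limit, using only the lower-bound direction of Corollary~\ref{cor:uniform_convergence}. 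Your ``compute and subtract'' is arguably the cleaner statement conceptually; the paper's split reuses a previously proved lemma and needs only one inequality direction from Corollary~\ref{cor:uniform_convergence} per tail. One point you flagged correctly and should carry out cleanly: the perturbation by $m_\ell = O(\log\ell) = o(\sqrt{\ell})$ is uniformly negligible on $[A\sqrt{\ell},B\sqrt{\ell}]$ for each fixed $A,B$, both in the endpoints and in the cross term of the Gaussian factor. In the lattice case your plan is sound, but note that the integrand $\P(M_\ell>y)$ is piecewise constant in $y$, so Corollary~\ref{cor:uniform_convergence} gives the asymptotic only at lattice points; one must pass between the integral and the lattice sum via manipulations of the type appearing in~\eqref{eq:unification_limsup} and in the lattice part of the paper's proof, rather than substituting the asymptotic pointwise in~$y$.
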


\begin{proof}
Let $\vep \in (0,1)$.
By Proposition~\ref{prop:restriction_lower}, there exists $A > 0$ small enough so that
\begin{equation*}
    \limsup_{\ell\to\infty} \int_0^{A\sqrt{\ell}}  y \e^y \P(M_\ell>y) \d y \le \vep.
\end{equation*}
In view of~\eqref{eq:unification}, it remains to show that
\begin{equation}\label{eq:domain}
    \liminf_{B\to\infty} \liminf_{\ell\to\infty} \int_0^{B\sqrt{\ell}}y\e^{y}\P(M_\ell>y)\d{y} \ge \lim_{\ell \to \infty} \int_0^\infty y \e^y \P(M_\ell>y) \d{y} \eqcolon \sqrt{\frac{\pi}{2}}\sigma^3 C^*.
\end{equation}
Let us first assume the branching random walk to be non-lattice.
Applying Corollary~\ref{cor:uniform_convergence} with $x = y - m_\ell$, we obtain that, for~$\ell$ large enough,
\begin{equation*}
    (1-\vep)C^*\ell^{-3/2}y\e^{-y-y^2/(2\ell\sigma^2)} \le \P(M_\ell>y) \le (1+\vep)C^*\ell^{-3/2}y\e^{-y-y^2/(2\ell\sigma^2)},
\end{equation*}
uniformly in $y\in[0,B\sqrt{\ell}]$.
In particular,
\begin{equation*}
    \int_0^{B\sqrt{\ell}}y\e^{y}\P(M_\ell>y)\d{y} \ge (1-\vep)C^*\ell^{-3/2}\int_0^{B\sqrt{\ell}}y^2\e^{-y^2/(2\ell\sigma^2)}\d{y} = (1-\vep)C^*\int_0^Bz^2\e^{-z^2/(2\sigma^2)}\d{z}.
\end{equation*}
Letting $\ell \to \infty$ and then $B \to \infty$, we deduce~\eqref{eq:domain}.

Now assume the branching random walk to be $(h,a)$-lattice.
By Corollary~\ref{cor:uniform_convergence} and~\eqref{eq:unification}, for~$\ell$ large enough,
\begin{equation*}
    \P(M_\ell>y) \ge (1-\vep)\frac{h}{\e^h-1}C^*\ell^{-3/2}y\e^{-y-y^2/(2\ell\sigma^2)},
\end{equation*}
uniformly in $y\in[0,B\sqrt{\ell}]\cap(a\ell+h\Z)$.
Therefore,
\begin{align}
    \int_0^{B\sqrt{\ell}}y\e^{y}\P(M_\ell>y)\d{y}
    & \ge \sum_{y \in [0,B\sqrt{\ell}-h]\cap(a\ell+h\Z)} \P(M_\ell>y)\int_y^{y+h}x\e^x\d{x} \nonumber \\
    & \ge (1-\vep)\frac{h}{\e^h-1}C^*\ell^{-3/2} \sum_{y \in [0,B\sqrt{\ell}-h]\cap(a\ell+h\Z)} y\e^{-y-y^2/(2\ell\sigma^2)}\int_y^{y+h}x\e^x\d{x}. \label{eq:domain_lower_bound}
\end{align}
By monotonicity,
\begin{align*}
    \sum_{y \in [0,B\sqrt{\ell}-h]\cap(a\ell+h\Z)} y\e^{-y-y^2/(2\ell\sigma^2)}\int_y^{y+h}x\e^x\d{x}
    & \ge \sum_{y \in [0,B\sqrt{\ell}-h]\cap(a\ell+h\Z)} y^2\e^{-y-y^2/(2\ell\sigma^2)}\int_y^{y+h}\e^x\d{x} \\
    & = (\e^h-1) \sum_{y \in [0,B\sqrt{\ell}-h]\cap(a\ell+h\Z)} y^2\e^{-y^2/(2\ell\sigma^2)} \\
    & \ge \frac{\e^h-1}{h} \int_h^{B\sqrt{\ell}-h} (y-h)^2\e^{-y^2/(2\ell\sigma^2)}\d{y}.
\end{align*}
Inserting this lower bound in~\eqref{eq:domain_lower_bound} and performing the change of variable $z = y/\sqrt{\ell}$, we obtain
\begin{equation*}
    \int_0^{B\sqrt{\ell}}y\e^{y}\P(M_\ell>y)\d{y} \ge (1-\vep) C^* \int_{h/\sqrt{\ell}}^{B-h/\sqrt{\ell}} z^2 \e^{-z^2/(2\sigma^2)} \d{z} + o_\ell(1). 
\end{equation*}
Letting $\ell \to \infty$ and then $B \to \infty$, we deduce~\eqref{eq:domain}.
\end{proof}

\begin{remark}\label{rem:entropic_repulsion}
    A consequence of Proposition~\ref{prop:domain} is that, for any $\vep,t\in(0,1)$ and $x\in\R$, there exist $0<A<B<\infty$ such that 
    \begin{equation}\label{eq:trajectory_of_particle}
        \limsup_{n \to \infty} \P(\exists |u|=n, V(u_{\lfloor t n \rfloor}) \notin [-B\sqrt{n},
        -A\sqrt{n}], V(u) > m_n+x) \le \vep.
    \end{equation}
    We will prove it in Lemma~\ref{lem:two_speed_trajectory} below for a more general model, namely the two-speed branching random walk.
    This result shows that, with high probability, the ancestor at time $\lfloor t n\rfloor$ of particles near~$m_n$ at time~$n$ has a position of order $-\sqrt{n}$.
    This is in line with~\cite{Chen2015b}, where Chen proved that the rescaled trajectory leading to the maximal position at time~$n$ converges in law to a Brownian excursion.
    Similarly, Chen, Madaule, and Mallein~\cite{ChenMadauleMallein2019} proved that the rescaled trajectory of a particle selected according to a supercritical Gibbs measure at time~$n$ (whose position is typically at distance~$O(1)$ from~$m_n$) also converges in law to a Brownian excursion.
    Subsequently, the result~\eqref{eq:trajectory_of_particle} can be interpreted as the ``entropic repulsion'' phenomenon: a Brownian bridge of length~$n$ conditioned to stay below a line for most of its lifespan, actually stays at a distance of order $\min(\sqrt{k},\sqrt{n-k})$ below this line at each time $k \in [O(1),n-O(1)]$.
    In the framework of branching Brownian motion, this property was already present in Bramson's work~\cite{Bramson1978} and it is precisely formulated in~\cite[Theorem~2.3]{ArguinBovierKistler2011}.
\end{remark}

\subsection{Convergence of the maximum of a two-speed branching random walk}\label{sct:two_speed}

As an application of Theorem~\ref{th:moderate_deviation}, we prove the analog of~\eqref{eq:aidekon} and~\eqref{eq:bramson_ding_zeitouni} for a two-speed branching random walk, which was conjectured in~\cite{Luo2025+} for the non-lattice case.

Given $t\in(0,1)$ and $n\in\N$, a two-speed branching random walk on~$\R$ is a time-inhomogeneous branching random walk, \ie there is a change of reproduction law at time $\lfloor t n\rfloor$.
More precisely, there is one particle located at~$0$ at time~$0$, and each particle reproduces according to the law of a point process~$\cL_1$ independently from time $0$ to $\lfloor t n\rfloor - 1$, and reproduces according to the law of another point process~$\cL_2$ independently between $\lfloor t n\rfloor$ and~$n-1$.
In particular, if the law of~$\cL_1$ is equal to that of~$\cL_2$, then the two-speed branching random walk reduces to a time-homogeneous branching random walk.

We are interested in the asymptotic behavior of the maximal displacement $M_n^{(n)}$ of particles at time~$n$.
Fang and Zeitouni~\cite{FangZeitouni2012} showed the tightness of the centered $M_n^{(n)}$ when the particle displacements are Gaussian.
Furthermore, Mallein~\cite{Mallein2015} extended the tightness to a time-inhomogeneous branching random walk under general assumptions.
The weak convergence of the centered maximum has been proved by Bovier and Hartung~\cite{BovierHartung2014} for a two-speed branching Brownian motion and by Luo~\cite{Luo2025+} for a two-speed branching random walk in two different regimes, determined by a relation between~$\cL_1$ and~$\cL_2$, referred to as the \emph{slow} and \emph{fast regimes}.
The former corresponds to the case where the maximal particle at time~$n$ descends from one of the maximal particles at time $\lfloor tn \rfloor$, the latter corresponds to the case where the optimal path is at a distance of order~$n$ below the maximum at time $\lfloor tn \rfloor$ (see \eg~\cite{Mallein2015}).

In this section, we use Theorem~\ref{th:moderate_deviation} to prove that, in the \emph{mean regime}, \ie the frontier between the fast and slow regimes, the centered maximum converges in law to a standard Gumbel distribution with an independent random shift.
Before stating the main result, we introduce some notation.
For $i\in\{1,2\}$, let $(V_i(u):u\in\T)$ be a branching random walk with reproduction law~$\cL_i$, $Z_{i,\infty}$ be the limit of the associated derivative martingale~\eqref{eq:derivative_martingale}, $M_{i,n}\coloneq\max_{|u|=n}V_{i}(u)$, and
\begin{equation*}
    \sigma_i^2 \coloneq \E \left[ \sum_{|u|=1} V_i(u)^2 \e^{V_i(u)} \right].
\end{equation*}
Recall that $m_n=-(3/2)\log n$.
In the remainder of this section, we aim to prove the following theorem.

\begin{theorem}\label{th:two_speed}
    Assume that $(V_1(u):u\in\T)$ and $(V_2(u):u\in\T)$ satisfy the assumptions~\eqref{eq:assumption_supercriticality}, \eqref{eq:assumption_boundary_case}, \eqref{eq:assumption_gaussianity}, \eqref{eq:assumption_peeling_lemma}, and fix $t \in (0,1)$.
    \begin{enumerate}
        \item If $(V_2(u):u\in\T)$ is non-lattice, then
        \begin{equation}\label{eq:two_speed_non_lattice}
            \lim_{n \to \infty} \sup_{x \in \R} \left| \P(M_n^{(n)} \le m_n + x) - \E \left[ \exp \left( -\frac{C_{2,\mathrm{NL}} Z_{1,\infty} \e^{-x}}{(1-t+t\sigma_1^2/\sigma_2^2)^{3/2}} \right) \right] \right| = 0,
        \end{equation}
        where
        \begin{equation*}
            C_{2,\mathrm{NL}} \coloneq \frac{1}{\sigma_2^3} \sqrt{\frac{2}{\pi}} \lim_{\ell\to\infty} \int_0^\infty y \e^{y} \P(M_{2,\ell}>y) \d{y} \in (0,\infty).
        \end{equation*}
        \item If $(V_1(u):u\in\T)$ is $(h_1,a_1)$-lattice, $(V_2(u):u\in\T)$ is $(h_2,a_2)$-lattice, and $h_1\Z \subset h_2\Z$, then
        \begin{equation}\label{eq:two_speed_lattice}
            \lim_{n \to \infty} \sup_{x \in A_n} \left| \P(M_n^{(n)} \le m_n + x) - \E \left[ \exp \left( -\frac{C_{2,\mathrm{L}} Z_{1,\infty} \e^{-x}}{(1-t+t\sigma_1^2/\sigma_2^2)^{3/2}} \right) \right] \right| = 0,
        \end{equation}
        where
        \begin{equation*}
            A_n = \{x : m_n + x \in a_1 \lfloor tn \rfloor + a_2(n - \lfloor tn \rfloor) + h_2\Z\},
        \end{equation*}
        and
        \begin{equation*}
            C_{2,\mathrm{L}} \coloneq \frac{h_2}{\sigma_2^3}\sqrt{\frac{2}{\pi}}\lim_{\ell\to\infty}\sum_{y\in(a_2\ell+h_2\Z)\cap[0,\infty)}y\e^y\P(M_{2,\ell}>y)\in(0,\infty).
        \end{equation*}
    \end{enumerate}
\end{theorem}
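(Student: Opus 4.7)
The plan is to decompose the two-speed branching random walk at the speed-change time $k\coloneq\lfloor tn\rfloor$ and exploit conditional independence. Writing $s_n\coloneq n-k\sim(1-t)n$, the branching property yields
\[
\P\bigl(M_n^{(n)}\le m_n+x\mid\hF_k\bigr) = \prod_{|u|=k}\P\bigl(M_{2,s_n}\le m_n+x-V_1(u)\bigr).
\]
With $p_n(u)\coloneq\P(M_{2,s_n}>m_n+x-V_1(u))$ and using $\max_u p_n(u)=o_{\P}(1)$ (via~\eqref{eq:hu_upper_bound} applied to BRW~2), the linearization $-\log(1-z)=z+O(z^2)$ reduces the theorem to proving the convergence in probability
\[
\Xi_n(x)\coloneq\sum_{|u|=k}p_n(u)\ \xrightarrow[n\to\infty]{\P}\ \frac{C_{2,\circ}\,Z_{1,\infty}\,\e^{-x}}{(1-t+t\sigma_1^2/\sigma_2^2)^{3/2}},
\]
with $C_{2,\circ}\in\{C_{2,\mathrm{NL}},C_{2,\mathrm{L}}\}$ depending on the lattice case; the theorem then follows by bounded convergence.

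To estimate $p_n(u)$, set $y_u\coloneq m_n-m_{s_n}+x-V_1(u)$, so that the event becomes $\{M_{2,s_n}>m_{s_n}+y_u\}$ and $m_n-m_{s_n}=(3/2)\log(s_n/n)$ is bounded in~$n$. A preliminary truncation using Proposition~\ref{prop:upper_bound} rules out atypical first-phase trajectories and confines us to $-V_1(u)\to\infty$ with $V_1(u)=O(\sqrt{n})$; in particular $y_u\to\infty$ and $y_u=O(\sqrt{s_n})$, which puts us in the regime of Corollary~\ref{cor:uniform_convergence} applied to BRW~2. Substituting $y_u\sim -V_1(u)$ and $y_u^2/(2s_n\sigma_2^2)=V_1(u)^2/(2s_n\sigma_2^2)+o(1)$ (uniformly, since $\log n/\sqrt{n}\to 0$) gives
\[
p_n(u)\sim C_{2,\circ}\,(-V_1(u))\,\e^{V_1(u)}\,(n/s_n)^{3/2}\,\e^{-x}\,\e^{-V_1(u)^2/(2s_n\sigma_2^2)},
\]
so $\Xi_n(x)=(1+o_{\P}(1))\,C_{2,\circ}\,(n/s_n)^{3/2}\e^{-x}\,S_n$ with
\[
S_n\coloneq\sum_{|u|=k}(-V_1(u))\,\e^{V_1(u)}\,\e^{-V_1(u)^2/(2s_n\sigma_2^2)}.
\]

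To identify $\lim S_n$ in probability, I fix $a>0$ and consider the truncated derivative martingale $Z_k^{(a)}\coloneq\sum_{|u|=k}(a-V_1(u))\e^{V_1(u)}\mathds{1}_{\{V_1(u_j)\le a\,\forall j\le k\}}$, which converges a.s.\ to $Z_{1,\infty}^{(a)}$, with $Z_{1,\infty}^{(a)}\uparrow Z_{1,\infty}$ as $a\to\infty$. Under the Lyons-type change of measure $\hat{\P}^{(a)}$ associated with $Z_k^{(a)}$, the spine $(V_1(w_j))_{j\le k}$ is a centered random walk of variance $\sigma_1^2$, size-biased at the endpoint by $a-V_1(w_k)$ and conditioned to stay below~$a$; by Brownian meander convergence, $-V_1(w_k)/\sqrt{k}$ converges in law to a variable $R$ with density proportional to $r^2\e^{-r^2/(2\sigma_1^2)}$ on $r>0$. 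A direct Gaussian integral yields
\[
\E\bigl[\e^{-R^2 t/(2(1-t)\sigma_2^2)}\bigr]=\Bigl(\tfrac{(1-t)\sigma_2^2}{(1-t)\sigma_2^2+t\sigma_1^2}\Bigr)^{3/2},
\]
which combined with $(n/s_n)^{3/2}\to(1-t)^{-3/2}$ produces exactly the denominator $(1-t+t\sigma_1^2/\sigma_2^2)^{3/2}$. Replacing $(a-V_1(u))$ by $(-V_1(u))$ incurs an error $o_n(1)$ when $a/\sqrt{n}\to 0$, and a diagonal argument in $(n,a)$ delivers $S_n\to Z_{1,\infty}\bigl((1-t)\sigma_2^2/((1-t)\sigma_2^2+t\sigma_1^2)\bigr)^{3/2}$ in probability. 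For the lattice case in part~(ii), the hypotheses $h_1\Z\subset h_2\Z$ and $x+m_n\in a_1k+a_2s_n+h_2\Z$ imply $m_n+x-V_1(u)\in m_{s_n}+a_2s_n+h_2\Z$ for every $|u|=k$, so the lattice half of Corollary~\ref{cor:uniform_convergence} applies verbatim with constant $C_{2,\mathrm{L}}$. The main technical obstacle lies in promoting the convergence in law of $-V_1(w_k)/\sqrt{k}$ under $\hat{\P}^{(a)}$ to convergence in probability of the random ratio $S_n^{(a)}/Z_k^{(a)}$; this will demand an $L^2$-type (``many-to-two'') bound for BRW~1 with Gaussian weights, analogous in spirit to Lemma~\ref{lem:equivalent_Lambda_Lambda2} but complicated by the unbounded support of the Gaussian factor.
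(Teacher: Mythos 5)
Your decomposition at the speed change $k=\lfloor tn\rfloor$, the application of Corollary~\ref{cor:uniform_convergence} to the second phase, and the reduction to the weighted sum $S_n=\sum_{|u|=k}(-V_1(u))\e^{V_1(u)}\e^{-V_1(u)^2/(2s_n\sigma_2^2)}$ all match the paper's strategy. However, there is a genuine gap at the step you yourself flag as ``the main technical obstacle'': you state that $S_n$ converges in probability to $Z_{1,\infty}\bigl((1-t)\sigma_2^2/((1-t)\sigma_2^2+t\sigma_1^2)\bigr)^{3/2}$, but the argument you sketch (spine under $\hat{\P}^{(a)}$, Brownian-meander convergence of $-V_1(w_k)/\sqrt{k}$, then an unproved $L^2$ promotion) is not carried out. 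Convergence in law of the one-dimensional spine marginal does not by itself yield convergence in probability of the random sum $S_n^{(a)}/Z_k^{(a)}$; the concentration estimate you defer is precisely the crux, and without it the proof is incomplete. The paper sidesteps this entirely by invoking the convergence in probability of $Y_n(A,B)=\sum_{u\in T_{1,n}(A,B)}V_1(u)\e^{V_1(u)-V_1(u)^2/(2(n-t_n)\sigma_2^2)}$ from Madaule's Corollary~1.3 in~\cite{Madaule2016}, which is exactly the needed functional limit theorem for the derivative martingale seen from $\sqrt{n}$-scaled positions; citing that result closes the gap cleanly, whereas your route would require reproving a nontrivial part of it.

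Two secondary imprecisions. First, Proposition~\ref{prop:upper_bound} is a first-moment bound on $\E[\Lambda_{n,\ell}]$ for a single branching random walk and does not directly deliver the needed truncation $V_1(u_{t_n})\in[-B\sqrt{n},-A\sqrt{n}]$; the correct tool is a trajectory estimate of the form of Lemma~\ref{lem:two_speed_trajectory}, which combines Proposition~\ref{prop:domain} with an application of~\eqref{eq:hu_upper_bound} and the many-to-one formula to the inhomogeneous tree. Second, your argument as written gives convergence pointwise in $x$, whereas the statement requires uniformity over $x\in\R$ (resp.\ $x\in A_n$); the paper reduces to $x$ in a compact via the tightness of $M_n^{(n)}-m_n$ from~\cite[Theorem~1.4]{Mallein2015}, and a similar preliminary reduction should be added to your argument.
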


\begin{remark}
    The case where $(V_1(u):u\in\T)$ is $(h_1,a_1)$-lattice, $(V_2(u):u\in\T)$ is $(h_2,a_2)$-lattice, and $h_1\Z \not\subset h_2\Z$ is more intricate and we do not treat it here.
    The same holds for the case where $(V_1(u):u\in\T)$ is non-lattice and $(V_2(u):u\in\T)$ is lattice.
    In view of our proof, the issue in such a case is that, for $x \in \R$ and $n \ge 1$, the family $\{x-V_1(u) : |u|=t_n\}$ is not included in a lattice of span~$h_2$.
    This prevents us to have a uniform control of the quantities $\P_{V_1(u)}(M_{2,n-t_n} > m_n + x)$, for $|u|=t_n$, via Corollary~\ref{cor:uniform_convergence}.
\end{remark}

Before proving Theorem~\ref{th:two_speed}, we introduce a helpful lemma, which is a slight extension of~\eqref{eq:trajectory_of_particle}.
For $n \in \N$ and $u\in\T$ such that $0 \le |u| \le n$, we write $V^{(n)}(u)$ the position of~$u$ and $t_n \coloneq \lfloor tn \rfloor$.

\begin{lemma}\label{lem:two_speed_trajectory}
Assume that $(V_1(u):u\in\T)$ and $(V_2(u):u\in\T)$ satisfy the assumptions~\eqref{eq:assumption_supercriticality}, \eqref{eq:assumption_boundary_case}, \eqref{eq:assumption_gaussianity}, \eqref{eq:assumption_peeling_lemma}, and fix $t \in (0,1)$.
Then, for any $\vep>0$ and $x\in\R$, there exist $0<A<B<\infty$ such that 
\[\limsup_{n\to\infty}\P(\exists|u|=n,V^{(n)}(u_{t_n})\notin[-B\sqrt{n},-A\sqrt{n}],V^{(n)}(u)>m_n+x)\le\vep.\]
\end{lemma}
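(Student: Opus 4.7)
The plan is to use a first-moment argument in conjunction with the moderate deviation estimates already developed in the paper, reducing the problem to the integral studied in Proposition~\ref{prop:domain}. First, I split the event into two sub-events according to whether the ancestor at time $t_n$ is too low ($V^{(n)}(u_{t_n}) < -B\sqrt{n}$) or too high ($V^{(n)}(u_{t_n}) > -A\sqrt{n}$). By the union bound, it suffices to prove each sub-event has probability at most $\vep/2$ for appropriate constants $A$ and $B$.

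For each sub-event, the Markov property at time $t_n$ and the union bound over particles at that time yield
\begin{equation*}
    \P(\text{sub-event}) \le \E\Big[\sum_{|u|=t_n} \mathds{1}_{V^{(n)}(u) \in R} \, \P_{V^{(n)}(u)}(M_{2,n-t_n} > m_n + x)\Big],
\end{equation*}
where $R$ is the relevant range. Applying the many-to-one formula of Lemma~\ref{lem:many_to_one} to the first branching random walk (with law $\cL_1$) transforms this into an expectation over the associated random walk $S^{(1)}$, carrying the exponential weight $e^{-S_{t_n}^{(1)}}$.

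Next, I apply Corollary~\ref{cor:uniform_convergence} (the uniform moderate deviation asymptotic) to the second branching random walk to control $\P_{S_{t_n}^{(1)}}(M_{2,n-t_n} > m_n + x)$ on the event where the required deviation $z = m_n + x - S_{t_n}^{(1)} - m_{n-t_n}$ lies in the moderate range of order $\sqrt{n-t_n}$. After a change of variables from $S_{t_n}^{(1)}$ to $z$, the resulting expression reduces, up to multiplicative constants, to an integral of the form
\begin{equation*}
    \int_{z \notin [A'\sqrt{n-t_n}, B'\sqrt{n-t_n}]} z \, e^z \, \P(M_{2,n-t_n} > z) \, g_n(z) \, dz,
\end{equation*}
weighted by a Gaussian density $g_n(z)$ coming from the distribution of $S_{t_n}^{(1)}$. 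By Proposition~\ref{prop:domain} applied to $\cL_2$, together with the tail estimates of Propositions~\ref{prop:restriction_lower} and~\ref{prop:restriction_upper}, the main support of this integral is concentrated in $z \in [A'\sqrt{n-t_n}, B'\sqrt{n-t_n}]$ (which corresponds, under the change of variables, to $V^{(n)}(u_{t_n}) \in [-B\sqrt n, -A\sqrt n]$ for suitable $A, B$), and the contribution from outside this range is at most $\vep/2$.

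The main obstacle is that the naive first-moment bound may overestimate the probability by a factor of $\sqrt n$, due to the exponential weight $e^{-S_{t_n}^{(1)}}$ in the many-to-one formula amplifying contributions from very negative ancestor positions. Overcoming this requires either (i) a careful combination of the two Gaussian factors appearing in the integrand (the Gaussian density of $S_{t_n}^{(1)}$ and the moderate deviation correction $e^{-z^2/(2(n-t_n)\sigma_2^2)}$), which jointly produce a sharp Gaussian tail estimate in the ancestor position, or (ii) replacing the crude first moment with a barrier-truncated count $\Lambda^{\text{bad}}$ analogous to $\Lambda_{n,\ell}$ in Section~\ref{sct:equivalent_expectation} and verifying, by a second-moment argument in the style of Proposition~\ref{prop:expectation_equivalent} and Lemma~\ref{lem:equivalent_Lambda_Lambda2}, that $\E[\Lambda^{\text{bad}}] \sim \P(\text{sub-event})$.
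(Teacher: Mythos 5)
You correctly identify the crucial obstacle — the naive first-moment bound overestimates the probability of the bad sub-event — but neither of your two proposed remedies matches what actually closes the gap, and option (i) does not work. If you apply the many-to-one formula without any barrier, the density of $S^{(1)}_{t_n}$ at a level $y \approx -w\sqrt{n}$ is of order $n^{-1/2}$ by the local central limit theorem. Combined with the moderate-deviation estimate $\P_y(M_{2,n-t_n} > m_n+x) \approx C\,n^{-3/2}(x-y)\,\e^{-(x-y)-(x-y)^2/(2(n-t_n)\sigma_2^2)}$ and the weight $\e^{-y}$, a change of variable $y = -w\sqrt{n}$ produces an integrand of order $\sqrt{n}\,w\,\e^{-\alpha w^2/2}$, so the first moment scales like $\sqrt{n}\,\e^{-\alpha B^2/2}$, which diverges as $n\to\infty$ for any fixed $B$. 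The two Gaussian factors jointly give the desired exponential decay in $B$ but cannot remove the extra $\sqrt{n}$; no matter how carefully you combine them, the naive first moment is unbounded.

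The paper's fix is both necessary and simpler than your option (ii). It first introduces a flat ceiling $\gamma$ at a cost of $\e^{-\gamma}$ via the elementary inequality $\P(\exists u\in\T: V^{(n)}(u) > \gamma) \le \e^{-\gamma}$; this restricts attention to trajectories whose ancestors all stay below $\gamma$ up to time $t_n$. On this restricted event, Lemma~\ref{lem:mallein_random_walk_estimate} (the ballot-type estimate, not Corollary~\ref{cor:uniform_convergence}) gives a density of order $(1+\gamma)(1+\gamma-y)/n^{3/2}$ for $S^{(1)}_{t_n}$ near $y$, which is the missing $n^{-3/2}$ that precisely cancels $\e^{-m_n} = n^{3/2}$. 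The remaining sum over slices is converted to an integral, and after the change of variables $z = m_n + x + y - 1$ it reduces to $\int(z+O(\gamma))\e^z\P(M_{2,n-t_n}>z)\,\d z$ over the complement of $[A\sqrt{n},B\sqrt{n}]$. Propositions~\ref{prop:restriction_lower}, \ref{prop:restriction_upper}, \ref{prop:domain}, Lemma~\ref{lem:negligible_integral}, and the bound~\eqref{eq:hu_upper_bound} then show this is small for $A$ small and $B$ large; finally $\gamma\to\infty$. No second-moment argument in the style of Lemma~\ref{lem:equivalent_Lambda_Lambda2} is needed, and no barrier-truncated analogue of $\Lambda_{n,\ell}$ is constructed — the flat ceiling $\gamma$ alone does the job.
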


\begin{proof}
One can show that, for $\gamma>0$, we have $\P(\exists u\in\T:V(u)>\gamma) \le \e^{-\gamma}$ (see \eg~\cite[Lemma~3.2]{Chen2015a}).
Therefore,
\begin{align*}
    \P(\exists|u|=n,V^{(n)}(u_{t_n})\notin[-B\sqrt{n},-A\sqrt{n}],V^{(n)}(u)>m_n+x)
    & \le \e^{-\gamma} + \P \left( \substack{\exists|u|=n, \forall k \le t_n, V^{(n)}(u_k) \le \gamma, \\ V^{(n)}(u_{t_n}) \in (-\infty,-B\sqrt{n}]\cup[-A\sqrt{n},\gamma], \\ V^{(n)}(u)>m_n+x} \right) \\
    & \le \e^{-\gamma} + \P \left( \substack{\exists|u|=t_n, \forall k \le t_n, V^{(n)}(u_k) \le \gamma, \\ V^{(n)}(u)\in(-\infty,-B\sqrt{n}]\cup[-A\sqrt{n},\gamma], \\ \max_{v\succeq u,|v|=n} V^{(n)}(v)>m_n+x} \right).
\end{align*}
By Markov's inequality and the Markov property at time~$t_n$, the last probability term is bounded by
\begin{multline*}
    \E \left[ \sum_{|u|=t_n}\mathds{1}_{\left\{ \substack{\forall k \le t_n, V_1(u_k) \le \gamma, \\ V_1(u)\in(-\infty,-B\sqrt{n}]\cup[-A\sqrt{n},\gamma]} \right\}} \P_{V_1(u)}(M_{2,n-t_n}>m_n+x) \right] \\
    = \E \left[ \e^{-S_{1,t_n}} \mathds{1}_{\left\{ \substack{\forall k \le t_n, S_{1,k} \le \gamma, \\ S_{1,t_n}\in(-\infty,-B\sqrt{n}]\cup[-A\sqrt{n},\gamma]} \right\}} \P_{S_{1,t_n}}(M_{2,n-t_n}>m_n+x) \right],
\end{multline*}
where the equality follows from the many-to-one formula and $(S_{1,n})_{n \ge 0}$ is a random walk associated with the branching random walk $(V_1(u):u\in \T)$.
By decomposing the event $\{S_{1,t_n}\in(-\infty,-B\sqrt{n}]\cup[-A\sqrt{n},\gamma]\}$, we obtain
\begin{align*}
    & \E \left[ \e^{-S_{1,t_n}}\mathds{1}_{\left\{ \substack{\forall k \le t_n, S_{1,k} \le \gamma, \\ S_{1,t_n}\in(-\infty,-B\sqrt{n}]\cup[-A\sqrt{n},\gamma]} \right\}}\P_{S_{1,t_n}}(M_{2,n-t_n}>m_n+x) \right] \\
    & \quad \le \sum_{\substack{i \ge \lfloor -\gamma \rfloor, \\ i\notin[A\sqrt{n},B\sqrt{n}-1]}}\e^{i+1} \P \left( \substack{\forall k \le t_n, S_{1,k} \le \gamma, \\ S_{1,t_n}\in[-i-1,-i]} \right) \P(M_{2,n-t_n}>m_n+x+i) \\
    & \quad \le \frac{C}{n^{3/2}}\sum_{\substack{i \ge \lfloor -\gamma \rfloor, \\ i\notin[A\sqrt{n},B\sqrt{n}-1]}}(i+\gamma)\e^{i+1}\P(M_{2,n-t_n}>m_n+x+i) \\
    & \quad \le \frac{C}{n^{3/2}}\int_{[-\gamma-1,A\sqrt{n}+1]\cup[B\sqrt{n}-1,\infty)} (y+\gamma)\e^y\P(M_{2,{n-t_n}}>m_n+x+y-1)\d y,
\end{align*}
where the second inequality follows from Lemma~\ref{lem:mallein_random_walk_estimate}.
With the change of variable $z=m_n+x+y-1$, we rewrite the above bound as
\[C\e^x\int_{([-\gamma-1,A\sqrt{n}+1]\cup[B\sqrt{n}-1,\infty))+m_n+x-1} (z-m_n-x+\gamma+1)\e^z\P(M_{2,n-t_n}>z)\d z.\]
According to Lemma~\ref{lem:negligible_integral}, we have
\begin{equation}\label{eq:two_speed_trajectory_1}
    \limsup_{n\to\infty}\int_{[0,\infty)}(-m_n-x+\gamma+1)\e^z\P(M_{2,n-t_n}>z)\d z=0.
\end{equation}
Concerning the negative integration domain, we can bound
\begin{multline}\label{eq:two_speed_trajectory_2}
    0 \le \int_{[-\gamma+m_n+x-2,0)} (z-m_n-x+\gamma+1)\e^z\P(M_{2,n-t_n}>z)\d z \\
    \le C \log n \int_{-\infty}^{m_n/2} \e^z \d{z} + C \log n \int_{m_n/2}^0 \e^z \P(M_{2,n-t_n}>m_{n-t_n}-m_n+\frac{3}{2}\log(1-t)+z) \d{z},
\end{multline}
since $m_{n-t_n} \coloneq -\frac{3}{2}\log(n - \lfloor tn \rfloor) \le m_n - \frac{3}{2}\log(1-t)$.
In the right-hand side of~\eqref{eq:two_speed_trajectory_2}, computing the first term and bounding the second term via~\eqref{eq:hu_upper_bound}, we obtain that~\eqref{eq:two_speed_trajectory_2} converges to~$0$ as $n \to \infty$.
We deduce from~\eqref{eq:two_speed_trajectory_1} and~\eqref{eq:two_speed_trajectory_2} that
\begin{multline*}
\limsup_{n\to\infty}\P(\exists|u|=n,V^{(n)}(u_{t_n})\notin[-B\sqrt{n},-A\sqrt{n}],V^{(n)}(u)>m_n+x) \\
\le \e^{-\gamma}+C\limsup_{n\to\infty}\int_{[0,A\sqrt{n}+m_n+x]\cup[B\sqrt{n}+m_n+x-2,\infty)} z\e^z\P(M_{2,n-t_n}>z)\d z.
\end{multline*}
By Proposition~\ref{prop:domain}, for any $\vep \in (0,1)$, there exist~$A$ small enough and~$B$ large enough such that the above quantity is bounded by $\e^{-\gamma} + \vep$.
Letting $\gamma\to \infty$), this completes the proof.
\end{proof}

Now we prove Theorem~\ref{th:two_speed}.

\begin{proof}[Proof of Theorem~\ref{th:two_speed}]
The sequence $M_n^{(n)}-m_n$ is tight, by~\cite[Theorem~1.4]{Mallein2015}.
Therefore, it is sufficient to show~\eqref{eq:two_speed_non_lattice} and~\eqref{eq:two_speed_lattice} uniformly in $x \in K$, for~$K$ compact.
Let
\begin{equation*}
    T_n(A,B)\coloneq\{|u|=t_n:V^{(n)}(u)\in[-B\sqrt{n},-A\sqrt{n}]\},
\end{equation*}
and define, for  $|u|=t_n$, $M_{n-t_n}^{(u)}\coloneq\max_{v\succeq u,|v|=n}\{V^{(n)}(v)-V^{(n)}(u)\}$.
For $x \in K$, we have
\begin{equation}\label{eq:two_speed_bound_1}
    \P \left( \max_{u\in T_n(A,B)}\{V^{(n)}(u)+M_{n-t_n}^{(u)}\}> m_n+x \right) \le \P \left( M_n^{(n)}> m_n+x \right),
\end{equation}
and
\begin{multline}\label{eq:two_speed_bound_2}
    \P \left( M_n^{(n)}> m_n+x \right) \le \P \left( \max_{u\in T_n(A,B)}\{V^{(n)}(u)+M_{n-t_n}^{(u)}\}>m_n+x \right) \\
    + \P \left( \exists|u|=n,V^{(n)}(u_{t_n})\notin[-B\sqrt{n},-A\sqrt{n}],V^{(n)}(u)>m_n+\inf K \right),
\end{multline}
uniformly in $x \in K$.
Therefore, by Lemma~\ref{lem:two_speed_trajectory}, it is sufficient to compute the left-hand side of~\eqref{eq:two_speed_bound_1}.

Let
\begin{equation*}
    T_{1,n}(A,B)\coloneq\{|u|=t_n:V_1(u)\in[-B\sqrt{n},-A\sqrt{n}]\}.
\end{equation*}
By the Markov property at time~$t_n$, we have
\begin{align}
    \P \left( \max_{u \in T_n(A,B)} \{V^{(n)}(u)+M_{n-t_n}^{(u)}\} \le m_n+x \right)
    & = \E\left[\prod_{u\in T_n(A,B)}\mathds{1}_{\{V^{(n)}(u)+M_{n-t_n}^{(u)}\le m_n+x\}}\right] \\
    & = \E \left[ \exp \left( \sum_{u \in T_{1,n}(A,B)} \log \P_{V_1(u)}(M_{2,n-t_n} \le m_n+x) \right) \right]. \label{eq:two_speed_markov}
\end{align}
To compute $\P_{V_1(u)}(M_{2,n-t_n} > m_n+x)$, we distinguish the lattice and non-lattice cases.
\begin{enumerate}
    \item If $(V_2(u):u\in\T)$ is non-lattice, then, as $n \to \infty$,
    \begin{align}
        \P_y(M_{2,n-t_n} > m_n+x)
        & = \P \left( M_{2,n-t_n} > m_{n-t_n}-\frac{3}{2}\log\left(\frac{n}{n-t_n}\right)+x-y \right) \nonumber \\
        & \sim \frac{C_{2,\mathrm{NL}}}{(1-t)^{3/2}} (-y) \e^{-x+y-y^2/(2(n-t_n)\sigma_2^2)}, \label{eq:two_speed_equivalent_non_lattice}
    \end{align}
    uniformly in $y \in [-B\sqrt{n},-A\sqrt{n}]$ and $x \in K$, by Corollary~\ref{cor:uniform_convergence}.
    \item Now assume that $(V_1(u):u\in\T)$ is $(h_1,a_1)$-lattice, $(V_2(u):u\in\T)$ is $(h_2,a_2)$-lattice, and $h_1\Z \subset h_2\Z$.
    Note that $\{V_1(u):|u|=t_n\} \subset  [-B\sqrt{n},-A\sqrt{n}] \cap (a_1t_n+h_1\Z)$.
    Besides, the condition $h_1\Z \subset h_2\Z$ ensures that, for any $y \in [-B\sqrt{n},-A\sqrt{n}] \cap (a_1t_n+h_1\Z)$ and $x \in A_n \coloneq -m_n + a_1t_n + a_2(n-t_n) + h_2\Z$,
    \begin{equation*}
        -\frac{3}{2}\log\left(\frac{n}{n-t_n}\right)+x-y \in \{y : m_{n-t_n}+y \in a_2(n-t_n)+h_2\Z\}.
    \end{equation*}
    Therefore, Corollary~\ref{cor:uniform_convergence} yields that, as $n \to \infty$,
    \begin{equation}\label{eq:two_speed_equivalent_lattice}
        \P_y(M_{2,n-t_n} > m_n+x) \sim \frac{C_{2,\mathrm{L}}}{(1-t)^{3/2}} (-y) \e^{-x+y-y^2/(2(n-t_n)\sigma_2^2)},
    \end{equation}
    uniformly in $y \in [-B\sqrt{n},-A\sqrt{n}] \cap (a_1t_n+h_1\Z)$ and $x \in A_n \cap K$.
\end{enumerate}
In order to show~\eqref{eq:two_speed_non_lattice} and~\eqref{eq:two_speed_lattice} simultaneously, let us define $C_{2,\circ} = C_{2,\mathrm{NL}}$ and $A_n' = \R$ in the first case, $C_{2,\circ} = C_{2,\mathrm{L}}$ and $A_n' = A_n$ in the second case.
Inserting the equivalent~\eqref{eq:two_speed_equivalent_non_lattice} or~\eqref{eq:two_speed_equivalent_lattice}, according to the case we treat, into~\eqref{eq:two_speed_markov}, we obtain
\begin{equation*}
    \P \left( \max_{u \in T_n(A,B)} \{V^{(n)}(u)+M_{n-t_n}^{(u)}\} \le m_n+x \right) = \E \left[ \exp \left( (1+o_n(1)) \frac{C_{2,\circ} \e^{-x}}{(1-t)^{3/2}} Y_n(A,B) \right) \right],
\end{equation*}
where~$o_n(1)$ is deterministic and converges to~$0$ uniformly in $x \in A_n' \cap K$, and
\begin{equation*}
    Y_n(A,B) \coloneq \sum_{u\in T_{1,n}(A,B)} V_1(u) \e^{V_1(u)-V_1(u)^2/(2(n-t_n)\sigma_2^2)}.
\end{equation*}
By~\cite[Corollary~1.3]{Madaule2016},
\begin{equation*}
    Y_n(A,B) \xrightarrow[n \to \infty]{} Y_\infty(A,B) \coloneq -Z_{1,\infty}\sqrt{\frac{2}{\pi}}\int_{A/(\sigma_1\sqrt{t})}^{B/(\sigma_1\sqrt{t})}y^2\e^{-(\sigma_1^2t+\sigma_2^2(1-t))y^2/(2\sigma_2^2(1-t))} \d y, \quad \text{in probability}.
\end{equation*}
We have
\begin{multline*}
    \sup_{x \in A_n' \cap K} \left| \P \left( \max_{u \in T_n(A,B)} \{V^{(n)}(u)+M_{n-t_n}^{(u)}\} \le m_n+x \right) - \E \left[ \exp \left( \frac{C_{2,\circ} \e^{-x}}{(1-t)^{3/2}} Y_\infty(A,B) \right) \right] \right| \\
    \le \E \left[ \sup_{x \in A_n' \cap K} \left| \exp \left( (1+o_n(1)) \frac{C_{2,\circ} \e^{-x}}{(1-t)^{3/2}} Y_n(A,B) \right) - \exp \left( \frac{C_{2,\circ} \e^{-x}}{(1-t)^{3/2}} Y_\infty(A,B) \right) \right| \right].
\end{multline*}
Note that, in view of the definition of $Y_n(A,B)$ and $T_{1,n}(A,B)$, there exists $n_0 \ge 1$ such that, for any $n \ge n_0$, we have $Y_n(A,B) \le 0$ and then $Y_\infty(A,B) \le 0$. According to a basic inequality $|\e^{-x}-\e^{-y}|\le |x-y|\wedge1
$ for $x,y\ge 0$, we know that for $n$ large enough,
\begin{multline*}
\sup_{x \in A_n' \cap K} \left| \exp \left( (1+o_n(1)) \frac{C_{2,\circ} \e^{-x}}{(1-t)^{3/2}} Y_n(A,B) \right) - \exp \left( \frac{C_{2,\circ} \e^{-x}}{(1-t)^{3/2}} Y_\infty(A,B) \right) \right| \\
\le \left(\e^{\sup_{x\in K}|x|}\frac{C_{2,\circ}}{(1-t)^{3/2}}\left|(1+o_n(1))Y_n(A,B)-Y_\infty(A,B)\right|\right)\wedge 1.
\end{multline*}
Therefore, by the dominated convergence theorem and since~$K$ is compact, we deduce that
\[\E \left[ \sup_{x \in A_n' \cap K} \left| \exp \left( (1+o_n(1)) \frac{C_{2,\circ} \e^{-x}}{(1-t)^{3/2}} Y_n(A,B) \right) - \exp \left( \frac{C_{2,\circ} \e^{-x}}{(1-t)^{3/2}} Y_\infty(A,B) \right) \right| \right] \xrightarrow[n \to \infty]{} 0.\]
Combined with~\eqref{eq:two_speed_bound_1}, \eqref{eq:two_speed_bound_2}, and Lemma~\ref{lem:two_speed_trajectory}, letting $n\to\infty$ first, then $A\to0^+$ and $B\to\infty$, we conclude that 
\begin{equation*}
    \lim_{n \to \infty} \sup_{x \in A_n' \cap K} \left| \P \left( M_n \le m_n+x \right) - \E \left[ \exp \left( -\frac{C_{2,\circ} \e^{-x} Z_{1,\infty}}{(1-t+t\sigma_1^2/\sigma_2^2)^{3/2}} \right) \right] \right| = 0,
\end{equation*}
which completes the proof, thanks to tightness.
\end{proof}

\appendix

\section{Some analytic lemmas}

In this section, we consider some function $f \colon \N \times \R \to \R$ as well as two sequences $(\beta_n)$ and $(\gamma_n)$ such that $0 \le \beta_n = o(\gamma_n)$ and $\gamma_n \to \infty$ as $n \to \infty$.
The following analytic lemmas are useful throughout the paper (to derive Corollaries~\ref{cor:tail_distribution}, \ref{cor:uniform_convergence}, \ref{cor:simple_barrier_estimate}, \ref{cor:double_barrier_estimate}, \ref{cor:upper_bound_G}, and~\ref{cor:expectation_equivalent}), with appropriate choices of~$\beta_n$ and~$\gamma_n$.

\begin{lemma}\label{lem:from_sequence_to_large_constant}
    Assume that, for any sequence $(x_n)$ such that $x_n \to \infty$ and $x_n = O(\gamma_n)$, we have $f(n,x_n) \to 0$ as $n \to \infty$.
    Then, $\limsup_{x \to \infty} \limsup_{n \to \infty} |f(n,x)| = 0$.
\end{lemma}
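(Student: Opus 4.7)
The plan is to argue by contradiction. Suppose $\limsup_{x\to\infty}\limsup_{n\to\infty}|f(n,x)|>0$. Then there exist $\varepsilon>0$ and an increasing sequence of reals $(\xi_k)$ with $\xi_k\to\infty$ such that $\limsup_{n\to\infty}|f(n,\xi_k)|>\varepsilon$ for every $k$. The goal is to splice the $\xi_k$ into a single sequence $(x_n)$ which satisfies the hypotheses of the lemma yet along which $f(n,x_n)\not\to 0$, contradicting the assumption.

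To make the splicing respect the constraint $x_n=O(\gamma_n)$, I would first exploit the fact that $\gamma_n\to\infty$: the quantity $\inf_{n\ge N}\gamma_n$ tends to infinity as $N\to\infty$, so for each $k$ there exists an integer $N_k$ such that $\gamma_n\ge\xi_k$ for all $n\ge N_k$. Combined with the assumption $\limsup_{n\to\infty}|f(n,\xi_k)|>\varepsilon$, this lets me select an increasing sequence of integers $(n_k)$ satisfying $n_k\ge N_k$ and $|f(n_k,\xi_k)|>\varepsilon$ for every $k$.

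I would then define $x_n=\xi_k$ for $n_k\le n<n_{k+1}$, with arbitrary values assigned for $n<n_1$. This piecewise-constant sequence tends to infinity as $n\to\infty$ (since $\xi_k\to\infty$) and satisfies $x_n\le\gamma_n$ for all $n\ge n_1$ by construction, so $x_n=O(\gamma_n)$; in particular $(x_n)$ is a legitimate sequence for the lemma's hypothesis, which forces $f(n,x_n)\to 0$. However $|f(n_k,x_{n_k})|=|f(n_k,\xi_k)|>\varepsilon$ for every $k$, which is the desired contradiction.

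The argument is essentially bookkeeping; the only subtle point is that $(\gamma_n)$ is not assumed to be monotone, which is why I pass through the inner infimum $\inf_{n\ge N}\gamma_n$ rather than directly arranging $\gamma_{n_k}\ge\xi_k$. This preliminary extraction is the sole delicate step and is what guarantees that the interpolated sequence stays below $\gamma_n$ on entire intervals rather than just at the distinguished indices $n_k$.
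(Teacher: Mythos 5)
Your proof is correct and follows essentially the same contradiction argument as the paper: extract $\varepsilon>0$ and points $\xi_k\to\infty$ with $\limsup_n|f(n,\xi_k)|>\varepsilon$, then choose indices $n_k$ (using $\gamma_n\to\infty$ to ensure the lattice of constraints is satisfiable) along which $|f(n_k,\xi_k)|>\varepsilon$. The only difference is a matter of explicitness: the paper defines $x_n$ only at the distinguished indices $n_k$ and leaves the interpolation implicit, whereas you spell out the piecewise-constant definition on $[n_k,n_{k+1})$ and verify $x_n\le\gamma_n$ on entire blocks (via the preliminary $N_k$), which is the more careful reading since the hypothesis is stated for full sequences.
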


\begin{proof}
    Assume that $\limsup_{x \to \infty} \limsup_{n \to \infty} |f(n,x)| > 0$.
    Then, there exist $\vep > 0$ and a sequence $(y_k)$ such that $y_k \to \infty$ as $k \to \infty$ and, for any~$k$, $\limsup_{n \to \infty} |f(n,y_k)| > \vep$.
    This implies that, for any~$k$, we can find~$n_k$ as large as we want so that $|f(n,y_k)| > \vep$.
    In particular, we can set $n_0 = 0$ and define inductively, for $k \ge 1$,
    \begin{equation*}
        n_k = \inf\{n > n_{k-1} : \gamma_n \ge y_k \ \text{and} \ |f(n,y_k)| > \vep\}.
    \end{equation*}
    We set $x_{n_k} = y_k$.
    Thus, we have $x_{n_k} \to \infty$, $x_{n_k} = O(\gamma_{n_k})$, and $f(n_k,x_{n_k}) \not\to 0$ as $k \to \infty$, which contradicts the assumption of the lemma.
\end{proof}

\begin{lemma}\label{lem:from_sequence_to_uniformity}
    Assume that, for any sequence $(x_n)$ such that $\beta_n \le x_n \le \gamma_n$, we have $f(n,x_n) \to 0$ as $n \to \infty$.
    Then, we have $\limsup_{n \to \infty} \sup_{x \in [\beta_n,\gamma_n]} |f(n,x)| = 0$.
\end{lemma}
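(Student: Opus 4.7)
The plan is to proceed by contrapositive, in the same spirit as the proof of Lemma~\ref{lem:from_sequence_to_large_constant}. Suppose that $\limsup_{n \to \infty} \sup_{x \in [\beta_n,\gamma_n]} |f(n,x)| > 0$. Then there exist $\varepsilon > 0$ and a strictly increasing sequence of indices $(n_k)_{k \ge 1}$ such that, for every $k$,
\[
    \sup_{x \in [\beta_{n_k},\gamma_{n_k}]} |f(n_k,x)| > \varepsilon.
\]
By the definition of the supremum, for each $k$ we may pick some $y_k \in [\beta_{n_k},\gamma_{n_k}]$ with $|f(n_k,y_k)| > \varepsilon$.

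The next step is to upgrade these choices at the indices $n_k$ into a full sequence $(x_n)_{n \ge 0}$ with $\beta_n \le x_n \le \gamma_n$ for every $n$, so that the hypothesis of the lemma applies. Concretely, I set $x_{n_k} = y_k$ for each $k$, and for every $n \notin \{n_k : k \ge 1\}$ I define $x_n \coloneq \beta_n$; this is a legitimate choice because $\beta_n \in [\beta_n,\gamma_n]$. By construction, $\beta_n \le x_n \le \gamma_n$ for all $n$, so by the hypothesis $f(n,x_n) \to 0$ as $n \to \infty$. But the subsequence $f(n_k, x_{n_k}) = f(n_k, y_k)$ satisfies $|f(n_k, x_{n_k})| > \varepsilon$ for every $k$, contradicting $f(n,x_n) \to 0$. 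Hence $\limsup_{n \to \infty} \sup_{x \in [\beta_n,\gamma_n]} |f(n,x)| = 0$.

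There is no genuine obstacle here: the only thing to be a little careful about is that the ``filler'' values $x_n$ for $n \notin \{n_k\}$ must lie in $[\beta_n,\gamma_n]$, which is why we invoke the assumption $\beta_n \ge 0$ (so that $[\beta_n,\gamma_n]$ is nonempty for $n$ large enough, which is all that matters for the $\limsup$). No use is made of the condition $\beta_n = o(\gamma_n)$ or $\gamma_n \to \infty$ in this particular lemma; those hypotheses are only needed in the companion Lemma~\ref{lem:from_sequence_to_large_constant}.
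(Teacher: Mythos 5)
Your argument is correct and is essentially the paper's own proof: extract a subsequence $(n_k)$ where the supremum exceeds $\varepsilon$, pick witnesses $y_k$, assemble a full admissible sequence, and contradict the hypothesis. One small slip in your closing remark: it is not $\beta_n \ge 0$ that makes $[\beta_n,\gamma_n]$ nonempty for large $n$, but rather $\beta_n = o(\gamma_n)$ together with $\gamma_n \to \infty$ (so that $\beta_n < \gamma_n$ eventually) — so those standing assumptions are in fact quietly used here, contrary to your claim that they are only needed for Lemma~\ref{lem:from_sequence_to_large_constant}.
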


\begin{proof}
    Similarly to Lemma~\ref{lem:from_sequence_to_large_constant}, if $\limsup_{n \to \infty} \sup_{x \in [\beta_n,\gamma_n]} |f(n,x)| > 0$, then there exist $\vep > 0$ and sequences $(n_k)$ and $(x_{n_k})$ such that $n_k \to \infty$, $\beta_{n_k} \le x_{n_k} \le \gamma_{n_k}$, and $|f(n_k,x_{n_k})| > \vep$, which contradicts the assumption of the lemma.
\end{proof}

\begin{lemma}\label{lem:combine_regimes}
    Assume that, for any sequence $(x_n)$ such that
    \begin{enumerate}
        \item either $\beta_n \le x_n = o(\gamma_n)$,
        \item or $x_n \asymp \gamma_n$,
    \end{enumerate}
    we have $f(n,x_n) \to 0$ as $n \to \infty$.
    Then the latter convergence holds for any choice of $(x_n)$ such that $\beta_n \le x_n = O(\gamma_n)$.
\end{lemma}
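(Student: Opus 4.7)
The plan is to argue by contradiction via a standard subsequence extraction, in the spirit of Lemmas~\ref{lem:from_sequence_to_large_constant} and~\ref{lem:from_sequence_to_uniformity}. Suppose there exists a sequence $(x_n)$ with $\beta_n \le x_n = O(\gamma_n)$ such that $f(n,x_n) \not\to 0$. Then one can extract $\vep > 0$ and an increasing sequence of indices $(n_k)$ with $|f(n_k,x_{n_k})| \ge \vep$ for every~$k$. Since the ratios $x_{n_k}/\gamma_{n_k}$ are bounded, by the Bolzano--Weierstrass theorem I may further extract a subsequence, still denoted $(n_k)$, along which $x_{n_k}/\gamma_{n_k} \to c$ for some $c \in [0,\infty)$.

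I then split according to the value of~$c$. If $c > 0$, I will construct a full sequence $(y_n)$ by setting $y_{n_k} = x_{n_k}$ and $y_n = c\gamma_n$ for $n \notin \{n_k\}$. Both branches satisfy $y_n/\gamma_n \to c \in (0,\infty)$, so $y_n \asymp \gamma_n$, and the hypothesis of the lemma gives $f(n,y_n) \to 0$; this contradicts $|f(n_k,y_{n_k})| = |f(n_k,x_{n_k})| \ge \vep$. If $c = 0$, I set $y_{n_k} = x_{n_k}$ and $y_n = \beta_n$ for $n \notin \{n_k\}$. Then $\beta_n \le y_n$ throughout (using $x_{n_k} \ge \beta_{n_k}$ by assumption), and $y_{n_k}/\gamma_{n_k} \to 0$ together with $\beta_n/\gamma_n \to 0$ ensures that $y_n = o(\gamma_n)$. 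The hypothesis again forces $f(n,y_n) \to 0$, contradicting the bound along the subsequence.

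The argument is entirely elementary and I do not anticipate any genuine obstacle; the only small point of care is choosing the filler values off the extracted subsequence so that the resulting full sequence lies in the correct regime, which is automatic from $\beta_n = o(\gamma_n)$ and the definition of $\asymp$.
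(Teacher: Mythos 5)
Your proof is correct and follows the same contradiction-plus-subsequence-extraction route as the paper. You are slightly more explicit than the paper in filling in the off-subsequence values ($c\gamma_n$ or $\beta_n$) so as to produce a genuine full sequence to which the hypothesis applies — a step the paper's own proof glosses over by applying the hypothesis directly to a subsequence — but the underlying argument is the same.
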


\begin{proof}
    Assume that there exists a sequence $(x_n)$ such that $\beta_n \le x_n = O(\gamma_n)$ and $f(n,x_n) \not\to 0$ as $n \to \infty$.
    Then, there exist $\vep > 0$ and a subsequence $(x_{n_k})$ such that $\liminf_{k \to \infty} |f(n_k,x_{n_k})| > \vep$.
    If $x_{n_k} = o(\gamma_{n_k})$, it directly contradicts the assumption of the lemma.
    Otherwise, there exists a subsequence $(x_{n_{k_j}})$ such that $x_{n_{k_j}} \asymp \gamma_{n_k}$.
    Since $\liminf_{j \to \infty} |f(n_{k_j},x_{n_{k_j}})| > \vep$, it still contradicts the assumption of the lemma.
\end{proof}

\section{Proof of Lemma~\ref{lem:ballot_additional_variables}}\label{sct:ballot_additional_variables}

We first prove the following lemma, which is a variation of~\cite[Lemma~C.1]{Aidekon2013}.

\begin{lemma}\label{lem:ballot_additional_variables_2}
    With the same notation as in Lemma~\ref{lem:ballot_additional_variables} and setting $\xi = \xi_1$, there exists $C = C(\sigma^2) > 0$ such that, for any $x > 0$,
    \begin{equation}\label{eq:ballot_additional_variables_2}
        \P(\exists k \le n, S_k > x+f_n(k)-\xi_k, \forall j \le n, S_j \le x+f_n(j)) \le C \frac{1+x}{\sqrt{n}} \left( \P(\xi \ge 0)^{1/2} + \E \left[ \xi_+^2 \right]^{1/2} \right).
    \end{equation}
\end{lemma}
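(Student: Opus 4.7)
My plan is to follow the strategy of~\cite[Lemma~C.1]{Aidekon2013} adapted to this setting. The natural starting point is a union bound over the time $k \le n$ at which the exceedance $S_k > x+f_n(k)-\xi_k$ occurs, reducing the problem to bounding $\sum_{k=1}^n \P(A_k)$, with $A_k = \{\forall j \le n, S_j \le x+f_n(j),\ \xi_k > Z_k\}$ and $Z_k \coloneq x+f_n(k)-S_k$, which is non-negative on the ballot event.

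For each $k$, I would apply the Markov property at time $k-1$ in order to isolate the pair $(Y_k, \xi_k) = (S_k-S_{k-1}, \xi_k)$, which is independent of both the past event $B_{k-1} = \{\forall j < k, S_j \le x+f_n(j)\}$ and of the post-$k$ trajectory. The post-$k$ ballot portion is controlled by Lemma~\ref{lem:mallein_random_walk_estimate} applied to the shifted barrier $i \mapsto f_n(k+i)-f_n(k)$, which produces a factor of order $C(1+Z_k)/\sqrt{n-k+1}$.

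The key step is to treat the resulting $(Y_k, \xi_k)$-expectation so that the two pieces of $\eta$ appear. I would split
\begin{equation*}
    \mathds{1}_{\{\xi_k > Z_k\}} \le \mathds{1}_{\{\xi_k \ge 0,\ Z_k \le 1\}} + \mathds{1}_{\{\xi_k > 1\}}.
\end{equation*}
The first piece forces $S_k$ into a unit window just below the barrier at time $k$, which is handled by the ``endpoint-type'' version of Lemma~\ref{lem:mallein_random_walk_estimate} for the past ballot, contributing a factor $C(1+x)/k^{3/2}$; the factor $\P(\xi \ge 0)^{1/2}$ is then produced by a Cauchy--Schwarz applied to the joint law of $(Y_k, \xi_k)$, which is precisely how the coupling between $\xi_k$ and $Y_k$ is disentangled. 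For the second piece, combining $\mathds{1}_{\{\xi_k > 1\}} \le \xi_{k,+}^2$ with Cauchy--Schwarz on the joint law yields the factor $\E[\xi_+^2]^{1/2}$ in place of $\P(\xi \ge 0)^{1/2}$.

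It remains to sum the resulting pointwise bounds over $k$. The hard part will be to extract the correct $1/\sqrt{n}$ factor: a naive summation of $1/(k^{3/2}\sqrt{n-k+1})$ is only $O(1)$ in $n$, so I expect to have to interpolate the pointwise estimate above against the global ballot bound $\P(\forall j \le n, S_j \le x+f_n(j)) \le C(1+x)/\sqrt{n}$, which also follows from Lemma~\ref{lem:mallein_random_walk_estimate} after summing the endpoint constraint $S_n \in x+f_n(n)-[y,y+1]$ over $y \ge 0$. This balancing, combined with the repeated use of Cauchy--Schwarz to handle the dependence between $\xi_k$ and $Y_k$, should be the heart of the argument and the source of the close-to-optimal form $\eta = \P(\xi \ge 0)^{1/2} + \E[\xi_+^2]^{1/2}$ on the right-hand side of~\eqref{eq:ballot_additional_variables_2}.
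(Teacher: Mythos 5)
Your overall scaffolding (union bound over the exceedance time $k$, Markov property, Lemma~\ref{lem:mallein_random_walk_estimate} for the pre-$k$ and post-$k$ ballot portions, Cauchy--Schwarz to decouple $\xi_k$ from $Y_k$) matches the paper's proof, and your split of $\mathds{1}_{\{\xi_k>Z_k\}}$ into $\{\xi_k\ge 0, Z_k\le 1\}$ and $\{\xi_k>1\}$ is a different, but sound, way to organize the final expectation. However, there are two concrete problems with the plan as stated.

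First, the summation step is mis-diagnosed. The sum $\sum_{k=1}^{n-1} k^{-3/2}(n-k+1)^{-1/2}$ is $O(1/\sqrt{n})$, not $O(1)$: for $k\le n/2$ one bounds $(n-k+1)^{-1/2}\le C n^{-1/2}$ and sums a convergent $\sum k^{-3/2}$, while for $k>n/2$ one has $k^{-3/2}\le C n^{-3/2}$ and $\sum_{k>n/2}(n-k+1)^{-1/2}=O(\sqrt{n})$, giving $O(1/n)$. So the ``interpolation against the global ballot bound'' you anticipate is not needed; the crude split at $k=n/2$ already yields the correct $n^{-1/2}$ prefactor. This is exactly how the paper proceeds.

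Second, and more seriously, for the $\{\xi_k>1\}$ piece the move $\mathds{1}_{\{\xi_k>1\}}\le\xi_{k,+}^2$ followed by Cauchy--Schwarz is a dead end. Once the forward ballot is bounded by $(1+Z_k)/\sqrt{n-k}\le(1+\xi_k)/\sqrt{n-k}$ and the backward ballot by $(1+x)(1+(Y_k+\xi_k)_+\wedge\sqrt{k})(1+\xi_k\wedge\sqrt{k})/k^{3/2}$, the $\xi_k$-dependence inside the expectation is already quadratic. Multiplying by $\xi_{k,+}^2$ and applying Cauchy--Schwarz then forces moments of $\xi_+$ of order four or higher, which are not available. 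You also glossed over why the $\wedge\sqrt{k}$ truncation matters: without it, summing the backward-ballot bound over $k$ would produce the product $(1+(Y+\xi)_+)(1+\xi_+)$ rather than the sum $1+(Y+\xi)_++\xi_+$, and the extra $\xi_+^2$ term, multiplied by the $(1+\xi_+)$ from the forward ballot, would generate a $\xi_+^3$ that again exceeds the moment assumption. The correct handling, as in the paper, is to keep the indicator $\mathds{1}_{\{\xi\ge 0\}}$ (or $\mathds{1}_{\{\xi>1\}}$), use the paper's summation observation $\sum_k k^{-3/2}(1+a\wedge\sqrt{k})(1+b\wedge\sqrt{k})\le C(1+a+b)$, expand the resulting polynomial in $\xi_+$ and $Y_+$, bound the dominant monomial by $\E[\xi_+^2]$, and then reconcile $\E[\xi_+^2]$ with $\E[\xi_+^2]^{1/2}$ by splitting on whether $\E[\xi_+^2]\le 1$ (otherwise falling back on the trivial ballot bound $C(1+x)/\sqrt{n}$). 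None of these steps appear in your sketch, and without them the second piece of your decomposition does not close.
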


Our argument corrects a mistake from~\cite[Lemma~A.1]{Mallein2016}.
The problem might come from an incorrect expansion of the expression $\mathds{1}_{\{\xi \ge 0\}} (1+X_++\xi_+)(1+\xi_+)$, that appears below.

\begin{proof}
    By applying successively the union bound, the Markov property, and Lemma~\ref{lem:mallein_random_walk_estimate}, we bound the left-hand side in~\eqref{eq:ballot_additional_variables_2} by
    \begin{align*}
        & \sum_{k=0}^{n-1} \P(S_k > x+f_n(k)-\xi_k, \forall j \le n, S_j \le x+f_n(j)) \\
        & \quad = \sum_{k=0}^{n-1} \E \left[ \mathds{1}_{\{S_k > x+f_n(k)-\xi_k, \forall j \le k, S_j \le x+f_n(j))\}} \P_{S_k}(\forall j \le n-k, S_j \le f_n(k+j)) \right] \\
        & \quad \le C \sum_{k=0}^{n-1} \E \left[ \mathds{1}_{\{S_k > x+f_n(k)-\xi_k, \forall j \le k, S_j \le x+f_n(j))\}} \frac{1+(x+f_n(k)-S_k)}{\sqrt{n-k}} \right].
    \end{align*}
    Note that, on the event where $S_k > x+f_n(k)-\xi_k$, we have $x+f_n(k)-S_k \le \xi_k$.
    Applying this bound, then conditioning on $(S_k-S_{k-1}, \xi_k)$ and applying Lemma~\ref{lem:mallein_random_walk_estimate}, we obtain that the left-hand side in~\eqref{eq:ballot_additional_variables_2} is bounded by
    \begin{equation*}
        C \sum_{k=0}^{n-1} \E \left[ \mathds{1}_{\{\xi \ge 0\}} \frac{(1+x)(1+(X+\xi)_+ \wedge \sqrt{k})(1+\xi \wedge \sqrt{k})}{(k+1)^{3/2}} \frac{1+\xi}{\sqrt{n-k}} \right],
    \end{equation*}
    where $(X,\xi) = (S_1,\xi_1)$.
    Observe that, for $a, b \ge 0$,
    \begin{align*}
        \sum_{k=0}^{n/2} \frac{(1 + a \wedge \sqrt{k})(1+ b \wedge \sqrt{k})}{(k+1)^{3/2}}
        & \le \sum_{k=0}^\infty \frac{(1 + a \wedge \sqrt{k} + b \wedge \sqrt{k} + (ab) \wedge k)}{(k+1)^{3/2}} \\
        & \le C(1 + \log_+ a + \log_+ b + \sqrt{ab}) \\
        & \le C(1+a+b).
    \end{align*}
    Therefore, up to a multiplicative constant, the left-hand side in~\eqref{eq:ballot_additional_variables_2} is bounded by
    \begin{equation*}
        \frac{1+x}{\sqrt{n}} \E \left[ \mathds{1}_{\{\xi \ge 0\}} (1+X_++\xi_+)(1+\xi_+) \right] \le C \frac{1+x}{\sqrt{n}} \left( \P(\xi \ge 0) + \P(\xi \ge 0)^{1/2} + \E \left[ \xi_+^2 \right] + \E \left[ \xi_+^2 \right]^{1/2} \right),
    \end{equation*}
    by Cauchy--Schwarz inequality, since~$X$ admits a finite second moment.
    Note that $\P(\xi \ge 0) \le \P(\xi \ge 0)^{1/2}$.
    Moreover, if $\E \left[ \xi_+^2 \right] \le 1$, then $\E \left[ \xi_+^2 \right] \le \E \left[ \xi_+^2 \right]^{1/2}$, and otherwise, we can use Lemma~\ref{lem:mallein_random_walk_estimate} to bound
    \begin{align*}
        \P(\exists k \le n, S_k > x+f_n(k)-\xi_k, \forall j \le n, S_j \le x+f_n(j))
        & \le \P(\forall j \le n, S_j \le x+f_n(j)) \\
        & \le C \frac{1+x}{\sqrt{n}} \\
        & \le C \frac{1+x}{\sqrt{n}} \left( \P(\xi \ge 0)^{1/2} + \E \left[ \xi_+^2 \right]^{1/2} \right).
    \end{align*}
    In both cases, the bound~\eqref{eq:ballot_additional_variables_2} holds.
\end{proof}

We now show Lemma~\ref{lem:ballot_additional_variables} by adapting the argument of~\cite[Lemma~3.3]{Mallein2016}.
As explained above, mind that there might be some missing terms in the bound of the latter.

\begin{proof}[Proof of Lemma~\ref{lem:ballot_additional_variables}]
    Define $\tau_n(x) = \inf\{k < n : S_k > x + f_n(k) - \xi_{k+1}\}$.
    We split the left-hand side of~\eqref{eq:ballot_additional_variables} into two, depending on whether the hitting time~$\tau_n(x)$ occurs before or after time~$\lfloor n/2 \rfloor$.
    To simplify notations, assume that~$n$ is even.
    This leads to the following first term,
    \begin{multline}\label{eq:ballot_additional_variables_1}
        \P(\forall k \le n, S_k \le x+f_n(k), \tau_n(x) < n/2, x+f_n(n)-S_n \in [y,y+1]) \\
        = \E \left[ \mathds{1}_{\{\forall k \le n/2, S_k \le x+f_n(k), \tau_n(x) < n/2\}} \P_{S_{n/2}}(\forall k \le n/2, S_k \le x+f_n(k+n/2), x+f_n(n)-S_{n/2} \in [y,y+1]) \right].
    \end{multline}
    Now, applying the Markov property and Lemma~\ref{lem:mallein_random_walk_estimate}, we obtain that the last probability is bounded by $C(1+y)/n$.
    Now, note that
    \begin{equation*}
        \{\tau_n(x) < n/2\} \subset \{\exists j \in (0,n/2], S_j > x+f_n(j-1)-(\xi_j-S_j+S_{j-1})\},
    \end{equation*}
    where $(S_j-S_{j-1}, \xi_j-S_j+S_{j-1})$, for $j \ge 1$, are \iid random variables on~$\R^2$.
    Therefore, by Lemma~\ref{lem:ballot_additional_variables_2},
    \begin{equation*}
        \P(\forall k \le n/2, S_k \le x+f_n(k), \tau_n(x) < n/2) \le C \frac{1+x}{\sqrt{n}} \left( \P(\xi_1 - S_1 \ge 0)^{1/2} + \E \left[ (\xi_1 - S_1)_+^2 \right]^{1/2} \right).
    \end{equation*}
    Hence, the first term~\eqref{eq:ballot_additional_variables_1} is bounded by the right-hand side of~\eqref{eq:ballot_additional_variables}.

    We treat the second term by time reversal.
    Define $\hat{S}_k=S_n-S_{n-k}$, $\hat{\xi}_k=\xi_{n-k+1}+1$, and $\hat{f}_n(k)=f_n(n)-f_n(n-k)$.
    Then, the random variables $(\hat{S}_k-\hat{S}_{k-1},\hat{\xi}_k)$ are \iid with the same law as $(S_1,\xi_1+1)$.
    We have
    \begin{align*}
        & \P(\forall k \le n, S_k \le x+f_n(k), \tau_n(x) \in[n/2,n), x+f_n(n)-S_n \in [y,y+1]) \\
        & \quad \le \P(\forall k \le n, \hat{S}_k \ge \hat{f}_n(k)-(y+1), \exists j \le n/2, \hat{S}_j < \hat{f}_n(j)-(y+1)+\hat{\xi}_j, \hat{S}_n-\hat{f}_n(n)+(y+1) \in [x,x+1]) \\
        & \quad \le C \frac{(1+x)(1+y)}{n^{3/2}} \left( \P(\xi_1+1 \ge 0)^{1/2} + \E \left[ (\xi_1+1)_+^2 \right]^{1/2} \right),
    \end{align*}
    by the Markov property and Lemmas~\ref{lem:mallein_random_walk_estimate} and~\ref{lem:ballot_additional_variables_2}.
    This concludes the proof.
\end{proof}

\section*{Acknowledgements}

We warmly thank Bastien Mallein and Michel Pain for their help, insightful advice, and careful reading.
We also acknowledge partial support from the MITI interdisciplinary program 80PRIME GEx-MBB, the ANR MBAP-P (ANR-24-CE40-1833) project, and the MINT fellowship program.
Furthermore, Louis Chataignier is supported by a French ministerial doctoral fellowship (MESRI) and Lianghui Luo is supported by China Scholarship Council (No.202306040031).

\bibliographystyle{abbrv}
\bibliography{biblio}

\end{document}